\documentclass[11pt,reqno]{amsart}
\usepackage{amsmath, amsthm, amssymb, amsfonts}
\usepackage{bbm}
\usepackage{bm}
\usepackage{graphicx}
\usepackage{xcolor}
\usepackage{hyperref}
\usepackage{enumitem}
\usepackage{cleveref}

\theoremstyle{plain}

\usepackage[backend=biber,sorting=nty]{biblatex}

\addbibresource{references.bib}

\numberwithin{equation}{section}

\newtheorem{theorem}{Theorem}[section]
\newtheorem{lemma}[theorem]{Lemma}
\newtheorem{proposition}[theorem]{Proposition}
\newtheorem{corollary}[theorem]{Corollary}
\newtheorem*{claim}{Claim}

\theoremstyle{definition}
\newtheorem{definition}[theorem]{Definition}
\newtheorem{notation}[theorem]{Notation}
\newtheorem{example}[theorem]{Example}

\newtheorem{qn}{Question}
\newtheorem*{conjecture}{Conjecture}
\newtheorem*{axiom}{Axiom}

\newcommand{\func}{\operatorname}

\renewcommand{\c}[1]{\left\langle #1 \right\rangle}

\newcommand{\R}{\mathbb{R}}

\newcommand{\Z}{\mathbb{Z}}

\newcommand{\Po}{\cal{P}}
\newcommand{\N}{\mathbb{N}}

\renewcommand{\bar}[1]{\overline{#1}}


\newcommand{\iffbreak}{\phantom{{} \iff {}}}

\newcommand{\cal}{\mathcal}

\renewcommand{\bf}{\mathbf}

\newcommand{\bcal}[1]{\bm{\mathcal{#1}}}



\newcommand{\supp}{\func{supp}}


\newcommand{\rank}{\func{rank}}


\newcommand{\dom}{\func{dom}}
\newcommand{\ran}{\func{ran}}

\newcommand{\restrictedto}{\mathord{\upharpoonright}}

\newcommand{\FIN}{\bf{FIN}}

\newcommand{\V}{\bf{V}}

\newcommand{\X}{\cal{X}}

\renewcommand{\H}{\cal{H}}

\newcommand{\lh}{\func{lh}}
\newcommand{\G}{\cal{G}}
\newcommand{\last}{\func{last}}

\renewcommand{\O}{\cal{O}}
\newcommand{\depth}{\func{depth}}

\newcommand{\fin}{\mathrm{fin}}
\newcommand{\D}{\cal{D}}

\newcommand{\E}{\cal{E}}

\newcommand{\height}{\func{height}}

\newcommand{\blhd}{\blacktriangleleft}

\let\strokeL\L
\DeclareRobustCommand{\L}{\ifmmode\mathbf{L}\else\strokeL\fi}

\let\Sec\S
\renewcommand{\S}{\mathcal{S}}

\makeatletter
\DeclareFontFamily{OMX}{MnSymbolE}{}
\DeclareSymbolFont{MnLargeSymbols}{OMX}{MnSymbolE}{m}{n}
\SetSymbolFont{MnLargeSymbols}{bold}{OMX}{MnSymbolE}{b}{n}
\DeclareFontShape{OMX}{MnSymbolE}{m}{n}{
    <-6>  MnSymbolE5
   <6-7>  MnSymbolE6
   <7-8>  MnSymbolE7
   <8-9>  MnSymbolE8
   <9-10> MnSymbolE9
  <10-12> MnSymbolE10
  <12->   MnSymbolE12
}{}
\DeclareFontShape{OMX}{MnSymbolE}{b}{n}{
    <-6>  MnSymbolE-Bold5
   <6-7>  MnSymbolE-Bold6
   <7-8>  MnSymbolE-Bold7
   <8-9>  MnSymbolE-Bold8
   <9-10> MnSymbolE-Bold9
  <10-12> MnSymbolE-Bold10
  <12->   MnSymbolE-Bold12
}{}

\let\llangle\@undefined
\let\rrangle\@undefined
\DeclareMathDelimiter{\llangle}{\mathopen}%
                     {MnLargeSymbols}{'164}{MnLargeSymbols}{'164}
\DeclareMathDelimiter{\rrangle}{\mathclose}%
                     {MnLargeSymbols}{'171}{MnLargeSymbols}{'171}
\makeatother

\newenvironment{midproof}[1][Proof]
  {\begin{proof}[#1]}
  {\end{proof}}

\DeclareFontShape{OT1}{cmr}{bx}{sc}{<-> cmbcsc10}{}

\emergencystretch=1em
\raggedbottom
\title[\textbf{\lowercase{w}A2}-spaces, the Kastanas game and strategically Ramsey sets]{Weak \textbf{A2} spaces, the Kastanas game and strategically Ramsey sets}
\author{Clement Yung}

\begin{document}

\begin{abstract}
    We introduce the notion of a \emph{weak \textbf{A2} space} (or \textbf{wA2}-space), which generalises spaces satisfying Todor\v{c}evi\'{c}'s axioms \textbf{A1}-\textbf{A4} and countable vector spaces. We show that in any Polish weak \textbf{A2} space, analytic sets are Kastanas Ramsey, and discuss the relationship between Kastanas Ramsey sets and sets in the projective hierarchy. We also show that in all spaces satisfying \textbf{A1}-\textbf{A4}, every subset of $\cal{R}$ is Kastanas Ramsey iff Ramsey, generalising the recent result by \cite{CD23}. Finally, we show that in the setting of Gowers \textbf{wA2}-spaces, Kastanas Ramsey sets and strategically Ramsey sets coincide, providing a connection between the recent studies on topological Ramsey spaces and countable vector spaces.
\end{abstract}

\maketitle

\section{Introduction}
In this paper we show the notion of a \emph{weak \textbf{A2} space} provides a direct connection between the study of the abstract Kastanas game on closed triples $(\cal{R},\leq,r)$ satisfying Todor\v{c}evi\'{c}'s axioms \textbf{A1}-\textbf{A4} in \cite{CD23}, and the study of strategically Ramsey subsets of a countable vector space in \cite{R08}. We show that set-theoretic properties of strategically Ramsey subsets of countable vector spaces, and Ramsey subsets of topological Ramsey spaces, are consequences of the properties of Kastanas Ramsey sets in \textbf{wA2}-spaces. 

It was shown in \cite{CD23} that if $(\cal{R},\leq,r)$ is a closed triple $(\cal{R},\leq,r)$ satisfying Todor\v{c}evi\'{c}'s axioms \textbf{A1}-\textbf{A4}, and it is selective, then a subset of $\cal{R}$ is Kastanas Ramsey iff it is Ramsey. On the other hand, Rosendal showed that all analytic subsets of a countable vector space are strategically Ramsey, along with other set-theoretic behaviour of strategically Ramsey sets. This was made further abstract in \cite{d20}, where de Rancourt introduced the notion of a \emph{Gowers space}, and showed that all analytic subsets of a Gowers space are strategically Ramsey.

This paper presents three main theorems. Our first theorem generalises the result given in \cite{CD23} to all spaces satisfying \textbf{A1}-\textbf{A4}.

\begin{theorem}
\label{thm:kr.iff.r}
    Suppose that $(\cal{R},r,\leq)$ is a closed triple satisfying \textbf{A1}-\textbf{A4}. Then a set $\X \subseteq \cal{R}$ is Kastanas Ramsey iff it is Ramsey.
\end{theorem}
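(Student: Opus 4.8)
The plan is to prove both implications separately, exploiting the structure provided by axioms \textbf{A1}--\textbf{A4}. The easier direction should be that Kastanas Ramsey implies Ramsey: given a Kastanas Ramsey set $\X$, one wants to show that for every basic condition $(a,A)$ there is $B \leq A$ with $[a,B] \subseteq \X$ or $[a,B] \cap \X = \emptyset$. Here I would compare the Kastanas game with the Ramsey game (the Ellentuck-style combinatorial game underlying the definition of Ramsey sets): a winning strategy for one of the players in the Kastanas game on $(a,A)$ can be converted, using the amalgamation and Fr\'{e}chet properties guaranteed by \textbf{A1}--\textbf{A4} (especially \textbf{A3} and \textbf{A4}, which give the combinatorial forcing the finitization it needs), into a pure condition $B$ witnessing the Ramsey alternative. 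The key technical tool is that in a space satisfying \textbf{A1}--\textbf{A4}, a strategy for Player II in the Kastanas game can be ``pruned'' along a fusion sequence to produce a single $\leq$-lower bound.

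For the converse, that Ramsey implies Kastanas Ramsey, the natural approach is a combinatorial forcing argument in the style of Galvin--Prikry / Ellentuck, adapted to the Kastanas game. Fix a basic condition $(a,A)$. I would define, for the set $\X$, the notion of a condition $(b,B)$ \emph{accepting} $\X$ (Player II has a strategy in the Kastanas game played below $(b,B)$ to land in $\X$) and \emph{rejecting} $\X$ (symmetrically, or: no extension accepts). Using \textbf{A2} (the amalgamation/pigeonhole axiom) together with the \textbf{A3}--\textbf{A4} finiteness conditions, one shows by a fusion argument that there is $B \leq A$ which is \emph{decisive}: every condition $(b,B')$ with $B' \leq B$ and $b$ compatible either accepts or rejects, and moreover this is ``coherent'' along $B$. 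Then one argues that if $(a,B)$ rejects $\X$, Player I wins the Kastanas game below $(a,B)$, while if it accepts, Player II wins; in the remaining mixed case one uses the Ramsey property of $\X$ to locate within $B$ a further refinement on which the behaviour is homogeneous, contradicting mixedness. This is where the Ramsey hypothesis on $\X$ is actually consumed.

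The main obstacle I anticipate is the precise bookkeeping in the fusion/combinatorial forcing argument: the Kastanas game interleaves moves by both players (Player I plays finite approximations, Player II plays pure conditions), so ``accepting'' and ``rejecting'' are genuinely game-theoretic rather than purely combinatorial notions, and one must be careful that the fusion sequence respects the turn structure — in particular that the limit condition $B$ simultaneously works for all finite partial plays, which requires diagonalizing over the (countably many, by \textbf{A3}) possible finite approximations $b$ extending $a$ within $B$. Axiom \textbf{A4} (the Ramsey-type pigeonhole for one-step extensions) is what lets each individual step of the fusion be taken, and \textbf{A2} glues the steps; getting the quantifier order right so that the resulting $B$ is a legitimate lower bound in the Kastanas game tree is the delicate point.

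A secondary subtlety is handling the base case of the induction on the length of the stem: one should first verify the statement for conditions of the form $(\emptyset, A)$ and then note that the general case $(a,A)$ reduces to it by passing to the ``localized'' space $\cal{R} \restrictedto a = \{X \in \cal{R} : a \sqsubseteq X\}$ (or its analogue), checking that this localization again satisfies \textbf{A1}--\textbf{A4}; this localization step is routine but must be stated. With these pieces in place, both implications close and the theorem follows.
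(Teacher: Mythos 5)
Your division of labour is inverted, and this matters. The direction you treat as the hard one, Ramsey $\Rightarrow$ Kastanas Ramsey, is in fact immediate: if $[a,B] \subseteq \X$ then every outcome of $K[a,B]$ lies in $[a,B] \subseteq \X$ (outcomes extend $a$ and are $\leq B$ by construction, and $\cal{R}$ is metrically closed), so \textbf{II} reaches $\X$ with any legal strategy; symmetrically \textbf{I} reaches $\X^c$ when $[a,B] \subseteq \X^c$. No combinatorial forcing is needed here, and the forcing argument you sketch for this direction contains an unjustified step anyway: from ``$(a,B)$ rejects $\X$'' (no extension accepts) you cannot conclude that \textbf{I} has a strategy to reach $\X^c$ --- that is a determinacy-type assertion for an arbitrary set, and producing such a dichotomy is exactly the content of the Kastanas Ramsey property, not something you may assume while proving it.

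The real content is the direction you call easy, Kastanas Ramsey $\Rightarrow$ Ramsey, and your sketch omits its mechanism. The paper splits it into two statements: (i) \textbf{I} has a strategy in $K[a,A]$ to reach $\X$ iff $[a,B] \subseteq \X$ for some $B \in [a,A]$ (Proposition \ref{prop:kr.equals.r.for.I}); and (ii) if \textbf{II} has a strategy in $K[a,A]$ to reach $\X$ then so does \textbf{I} (Proposition \ref{prop:kr.II.strategy.gives.I}). Your claim that ``a strategy for Player \textbf{II} can be pruned along a fusion sequence to produce a single $\leq$-lower bound'' does not survive scrutiny: \textbf{II}'s responses depend on \textbf{I}'s moves, and \textbf{I} has uncountably many legal moves at each turn, so there is no countable family of responses to fuse over. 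The paper's fix is Lemma \ref{lem:diagonalise.suitable.functions}: at each state, \textbf{II}'s strategy induces ``suitable'' functions $f,g$ on the uncountable set $[a(s),\last(s)]$, and one produces $E_{f,g}$ so that every one-step extension $b$ inside $E_{f,g}$ is realised as $f(B)$ for some $B$ with $[b,E_{f,g}] \subseteq [b,g(B)]$. This requires a dense-open diagonalisation (Lemma \ref{lem:R.is.semiselective}, whose finitisation comes from the finiteness clause of \textbf{A2} together with \textbf{A3}, not from \textbf{A4}) followed by an application of \textbf{A4} to the partition of one-step extensions into those that are and are not so realised. Without an equivalent of this lemma, and of (i), your argument does not close.
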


Observing that the abstract Kastanas game may be similarly studied on \textbf{wA2}-spaces, we present a few set-theoretic properties of the set of Kastanas Ramsey subsets of a \textbf{wA2}-space $\cal{R}$. If $\cal{AR}$ is countable, then $\cal{R}$ is a Polish space under the usual metrisable topology, so we may consider the projective hierarchy of subsets of $\cal{R}$. 

\begin{theorem}
\label{thm:analytic.is.kr}
    Suppose that $(\cal{R},\leq,r)$ is a \textbf{wA2}-space, and that $\cal{AR}$ is countable (so $\cal{R}$ is a Polish space under the metrisable topology). Then every analytic set is Kastanas Ramsey. 
\end{theorem}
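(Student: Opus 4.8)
The statement asserts that in a Polish weak-\textbf{A2} space, every analytic set is Kastanas Ramsey. I don't have the definitions of \textbf{wA2}-space or the Kastanas game in front of me, but the structure of the argument is essentially forced by the classical template (Kastanas's original proof for the Ellentuck space, and its abstractions in \cite{CD23} and \cite{d20}): one proves something stronger that is closed under the relevant set operations, and then checks analytic sets are built from basic clopen pieces via those operations.

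**The approach I would take.** The plan is to introduce an auxiliary two-parameter notion — call it the set of pairs for which the Kastanas game is "determined in a local sense," i.e. for which from every condition $(a, A)$ either Player I has a strategy forcing the outcome into $\X$ or Player II has a strategy forcing it out of $\X$ on some further condition — and show (i) this family contains all clopen sets (here one uses the finitization/restriction structure of $\cal{AR}$ and an easy unfolding of the game with short memory), (ii) it is closed under complements (immediate from the symmetric phrasing) and countable unions/intersections, and (iii) membership in this family is equivalent to being Kastanas Ramsey. The real engine is a \emph{combinatorial forcing / fusion} argument: given a sequence $\X = \bigcup_n \X_n$ with each $\X_n$ in the family, and a condition $(a,A)$, one builds a fusion sequence $A \geq A_0 \geq A_1 \geq \cdots$ together with an increasing sequence of finite approximations, diagonalizing so that at stage $n$ one decides $\X_n$ along the $n$-th approximation, using the \textbf{wA2} axioms (amalgamation/pigeonhole-type properties that replace \textbf{A3}/\textbf{A4}) to keep the process going; the fusion limit then witnesses that $\X$ is in the family. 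The continuity/closedness of the triple is used to take the limit, and countability of $\cal{AR}$ ensures the fusion has only countably many tasks.

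**Reducing analytic sets.** Once the family is closed under countable unions and intersections and contains the clopen sets, I would pass to the Suslin operation: an analytic set is $\mathcal{A}_s F_s$ for a Suslin scheme of closed (hence, in the zero-dimensional Polish topology on $\cal{R}$, clopen-approximable) sets. Here one either shows the family is closed under the Suslin operation directly — the standard trick is to absorb the existential quantifier over branches into the Player-I strategy, running the game and the choice of branch simultaneously, which is exactly the "unfolding" step in Kastanas's proof — or one uses the already-established Theorem~\ref{thm:kr.iff.r}-style machinery if \textbf{wA2} is general enough to reuse it. I would phrase it via unfolding: replace $\X$ by the unfolded set $\tilde{\X} \subseteq \cal{R} \times \N^{\N}$ (a closed set), play the Kastanas game where Player I additionally builds the branch, conclude the unfolded game is determined-in-the-local-sense from clopen-ness plus closure under countable operations along the tree, and then project back down — projection being harmless because it only strengthens Player I's side and the "out of $\X$" alternative for Player II already provides a further condition avoiding all branches.

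**Main obstacle.** I expect the crux to be step (iii) together with the fusion in step (ii): verifying that the weakened axioms of a \textbf{wA2}-space still support a fusion argument and the combinatorial forcing lemmas. In the \textbf{A1}-\textbf{A4} setting one leans heavily on the amalgamation property \textbf{A3} and the Ramsey-type property \textbf{A4}; in a \textbf{wA2}-space these are presumably replaced by something weaker (designed to also cover countable vector spaces, where the natural "restriction" maps behave differently), so the delicate point is showing the decision lemma — "for any $\X$ in the family and any condition, there is a stronger condition deciding $\X$ in the local sense" — survives. The unfolding/Suslin step, by contrast, I expect to be routine once the countable-operations closure is in hand. I would therefore devote the bulk of the argument to isolating the exact \textbf{wA2} consequences needed for the fusion and stating them as a lemma before assembling the proof.
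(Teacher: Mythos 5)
Your plan hinges on the claim that the relevant family is ``closed under complements (immediate from the symmetric phrasing) and countable unions/intersections,'' and this is exactly where the argument breaks. The Kastanas Ramsey property here is \emph{not} symmetric: $\X$ is Kastanas Ramsey when below every condition there is a stronger one on which either \textbf{I} has a strategy to reach $\X^c$ or \textbf{II} has a strategy to reach $\X$, and the two players have structurally different roles in the game (only \textbf{II} commits to the finite approximations $a_n$). In fact the paper proves (Propositions \ref{prop:kr.not.symmetric} and \ref{prop:kr.not.closed.under.intersections}) that whenever a biasymptotic set exists and not every set is Kastanas Ramsey, the family $\bcal{KR}$ is closed neither under complements nor under finite intersections --- and both hypotheses are realized in \textbf{wA2}-spaces such as countable vector spaces over a field with more than two elements. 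So the ``$\sigma$-algebra containing the clopen sets, closed under the Suslin operation'' template cannot be run as stated; only closure under countable \emph{unions} survives (Proposition \ref{prop:kr.closed.under.countable.unions}).

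What replaces it in the paper is different in an essential way: one forms the auxiliary \textbf{wA2}-space $\cal{R} \times 2^\omega$, writes an analytic $\X$ as $\pi_0[\cal{C}]$ for a $G_\delta$ (hence Borel) $\cal{C} \subseteq \cal{R} \times 2^\omega$, observes that Borel subsets of the product are Kastanas Ramsey because the Kastanas game with Borel payoff is determined (Martin's Borel determinacy), and then proves a projection theorem: if $\cal{C}$ is Kastanas Ramsey in $\cal{R} \times 2^\omega$ then $\pi_0[\cal{C}]$ is Kastanas Ramsey in $\cal{R}$. Your ``unfolding'' paragraph is the right instinct for that projection step --- a strategy for \textbf{II} reaching $\cal{C}$ projects down trivially, while strategies for \textbf{I} reaching $\cal{C}^c$ over all codes are amalgamated into one reaching $\pi_0[\cal{C}]^c$ by diagonalizing over the finitely many codes of each length --- but the engine that decides the unfolded set is determinacy, not an inductive closure argument. (The alternative route the paper mentions, following Rosendal, applies the asymmetric dichotomy of Lemma \ref{lem:kr.general.dichotomy} along the tree of the analytic set; note that its second alternative is deliberately weaker than ``\textbf{II} wins for $\bigcap_n \X_n$,'' precisely because genuine closure under intersections is unavailable.)
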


We will also show that for a large class of \textbf{wA2}-spaces, Theorem \ref{thm:analytic.is.kr} is consistently optimal. See Theorem \ref{thm:sigma_1^2.wo.gives.non.kr.set} and Corollary \ref{cor:r.times.2^omega.has.coanalytic.non.kr.set}.

In our last section, we study the corresponding version of the Kastanas game on Gowers space\footnote{de Rancourt also introduced the Kastanas game $K_p$ in \cite{d20}, which differs from the one this paper shall be presenting.}. We relate the two concepts by introducing the notion of a \emph{Gowers \textbf{wA2}-space} (in which countable vector spaces are an example of).

\begin{theorem}
    Let $(\cal{R},r,\leq)$ be a Gowers \textbf{wA2}-space. Then the Kastanas game on $\cal{R}$ (as a Gowers space) and the Kastanas game on $\cal{R}$ (as a \textbf{wA2}-space) are equivalent. Furthermore, a subset of $\cal{R}$ is Kastanas Ramsey iff it is strategically Ramsey.
\end{theorem}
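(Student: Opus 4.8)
The plan is to prove the equivalence in two stages, following the structure suggested by the two claims in the statement. First I would establish that the two games — the Kastanas game on $\cal{R}$ viewed as a Gowers space and the Kastanas game on $\cal{R}$ viewed as a \textbf{wA2}-space — are \emph{equivalent} in the sense that for each player, a strategy in one game can be translated into a strategy in the other achieving the same outcome (landing the resulting element in a prescribed target set $\X$). Because a Gowers \textbf{wA2}-space carries both structures simultaneously, the key point is to set up an explicit dictionary between a move in one game (a finite approximation together with a ``condition'' restricting future play) and a move in the other. I would carry this out by induction on the length of a play: at each round, show that a legal move for player I (resp. II) in the Gowers-Kastanas game corresponds, via the \textbf{wA2} axioms relating $r$, $\leq$ and the combinatorial structure on $\cal{AR}$, to a legal move in the \textbf{wA2}-Kastanas game, and that the two plays produce the same limit element of $\cal{R}$. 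The main thing to verify is that the admissibility conditions on moves match up — i.e. that the ``depth'' or ``block'' constraints in the Gowers formulation are exactly captured by the restriction maps $r$ together with $\leq$ in the \textbf{wA2} formulation — and this is where the defining properties of a \emph{Gowers} \textbf{wA2}-space (as opposed to an arbitrary \textbf{wA2}-space) are used.

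Second, I would deduce that a subset $\X \subseteq \cal{R}$ is Kastanas Ramsey iff it is strategically Ramsey. Recall that ``$\X$ is strategically Ramsey'' is defined via the Gowers game (for every condition there is a further condition such that II has a strategy to stay inside $\X$, or I has a strategy to stay inside the complement), while ``$\X$ is Kastanas Ramsey'' is the analogous statement for the Kastanas game. Given the game equivalence from the first stage, the translation of strategies is immediate in one direction; for the other direction I would unwind the definitions and check that the quantifier alternation (``for all conditions, exists a condition, exists a strategy'') is preserved under the dictionary, using that the correspondence between conditions in the two settings is cofinal with respect to $\leq$. Here I would also need the fact — presumably available from the \textbf{wA2} framework, or provable by a short absoluteness/strategy-stealing argument — that the Kastanas game and the Gowers game on a Gowers \textbf{wA2}-space are themselves interdefinable, so that a winning strategy for one side in the Kastanas game yields a winning strategy for the corresponding side in the Gowers game and conversely.

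I expect the main obstacle to be the bookkeeping in the game-equivalence step: precisely matching the state of a partial play in the Gowers-Kastanas game (which typically records a sequence of vectors/approximations and a current subspace or condition) with the state in the \textbf{wA2}-Kastanas game (which records an element of $\cal{AR}$ and a $\leq$-condition), and checking that \emph{legality} of moves transfers in both directions without loss. In particular, one must ensure that when player I is allowed to ``pass'' or play a condition without extending the approximation in one game, the corresponding move is legal in the other, and that the length-$\omega$ limits agree; the \textbf{wA2} axioms (continuity/finitization of $\leq$, the amalgamation-type property, and the coherence of $r$) should be exactly what is needed, but assembling them into a clean bijection of plays is the delicate part. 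Once the game equivalence is in place, the Ramsey-property equivalence follows formally, so I would allot most of the write-up to stage one.
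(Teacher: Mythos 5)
Your first stage matches the paper's approach: the game equivalence (Theorem \ref{thm:kr.strategies.equivalence}) is proved exactly by the state-by-state dictionary you describe, with \textbf{G7} used to convert a condition played in one game into a legal condition in the other and \textbf{G2}/\textbf{G6} used to close the induction. That part of your plan is sound, and you correctly identify the bookkeeping of legality as the delicate point.

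The gap is in your second stage. You treat ``Kastanas Ramsey iff strategically Ramsey'' as a formal consequence of the game equivalence plus an interdefinability of the Kastanas and Gowers games that you describe as ``presumably available \dots or provable by a short absoluteness/strategy-stealing argument.'' It is not, and your paraphrase of strategically Ramsey is already off: it is defined through two \emph{different, asymmetric} games, namely the asymptotic game $F(q)$ for player \textbf{I} (where \textbf{I} may only play conditions $p_n \lessapprox q$ and \textbf{II} plays only points) and the Gowers game $G(q)$ for player \textbf{II} (where \textbf{II} plays no conditions at all), whereas the Kastanas game is a single symmetric game in which both players shrink conditions and \textbf{II} also plays the points. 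Passing from a winning Kastanas strategy for \textbf{II} in some $K(q)$ to a winning Gowers-game strategy for \textbf{II} in some $G(q')$, and dually for \textbf{I} and $F$, is the actual mathematical content of this half of the theorem; it requires the diagonalization property (3) of Gowers spaces (existence of $\leq^*$-lower bounds of $\leq$-decreasing sequences) and a genuine strategy-transfer argument, not an unwinding of quantifiers. The paper isolates this as Proposition \ref{prop:kr.iff.sr}, proved by padding the point set with a dummy symbol $0$ to build an auxiliary Gowers space in which the Kastanas game becomes de Rancourt's game $R(p)$ and the games $F$, $G$ become the adversarial games $AG_{\textbf{II}}$, $AG_{\textbf{I}}$, and then invoking Proposition \ref{prop:rr.iff.ar} (whose forward direction is Proposition 2.6 of de Rancourt and whose converse the paper proves). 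Without supplying this step, or an equivalent one, your second stage does not go through.
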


For the precise statements, see Theorem \ref{thm:kr.strategies.equivalence} and Proposition \ref{prop:kr.iff.sr}.

\section{Weak \textbf{A2} spaces}
In this section, we provide a recap of the axioms of topological Ramsey spaces presented by Todor\v{c}evi\'{c} in \cite{T10}, which would preface the setup of a \textbf{wA2}-space. We follow up by discussing various examples of \textbf{wA2}-spaces, and an overview of Ramsey subsets of \textbf{wA2}-spaces, motivating the need to study an alternative variant of an infinite-dimensional Ramsey property.

\subsection{Axioms} 
We recap the four axioms presented by Todor\v{c}evi\'{c} in \cite{T10}, which are sufficient conditions for a triple $(\cal{R},\leq,r)$ to be a topological Ramsey space. Here, $\cal{R}$ is a non-empty set, $\leq$ be a quasi-order on $\cal{R}$, and $r : \cal{R} \times \omega \to \cal{AR}$ is a function. We also define a sequence of maps $r_n : \cal{R} \to \cal{AR}$ by $r_n(A) := r(A,n)$ for all $A \in \cal{R}$. Let $\cal{AR}_n \subseteq \cal{AR}$ be the image of $r_n$ (i.e. $a \in \cal{AR}_n$ iff $a = r_n(A)$ for some $A \in \cal{R}$).

The four axioms are as follows:
\begin{enumerate}
    \item[\textbf{(A1)}] 
    \begin{enumerate}[label=(\arabic*)]
        \item $r_0(A) = \emptyset$ for all $A \in \cal{AR}$.

        \item $A \neq B$ implies $r_n(A) \neq r_n(B)$ for some $n$.

        \item $r_n(A) = r_m(B)$ implies $n = m$ and $r_k(A) = r_k(B)$ for all $k < n$.
    \end{enumerate}
    For each $a \in \cal{AR}$, let $\lh(a)$ denote the unique $n$ in which $a \in \cal{AR}_n$. By Axiom \textbf{A1}(3), this $n$ is well-defined.

    \item[\textbf{(A2)}]  There is a quasi-ordering $\leq_\fin$ on $\cal{AR}$ such that:
    \begin{enumerate}[label=(\arabic*)]
        \item $\{a \in \cal{AR} : a \leq_\fin b\}$ is finite for all $b \in \cal{AR}$.

        \item $A \leq B$ iff $\forall n \exists m [r_n(A) \leq_\fin r_m(B)]$.

        \item $\forall a,b \in \cal{AR}[a \sqsubseteq b \wedge b \leq_\fin c \to \exists d \sqsubseteq c[a \leq_\fin d]]$.
    \end{enumerate}

    \item[\textbf{(A3)}] We may define the \emph{Ellentuck neighbourhoods} as follows: For any $A \in \cal{R}$, $a \in \cal{AR}$ and $n \in \N$, we let:
    \begin{align*}
        [a,A] &:= \{B \in \cal{R} : B \leq A \wedge \exists n[r_n(B) = a]\}, \\
        [n,A] &:= [r_n(A),A].
    \end{align*}
    Then the depth function defined by, for $B \in \cal{R}$ and $a \in \cal{AR}$:
    \begin{align*}
        \depth_B(a) := 
        \begin{cases}
            \min\{n < \omega : a \leq_\fin r_n(B)\}, &\text{if such $n$ exists}, \\
            \infty, &\text{otherwise}.
        \end{cases}
    \end{align*}
    satisfies the following:
    \begin{enumerate}[label=(\arabic*)]
        \item If $\depth_B(a) < \infty$, then for all $A \in [\depth_B(a),B]$, $[a,A] \neq \emptyset$.

        \item If $A \leq B$ and $[a,A] \neq \emptyset$, then there exists $A' \in [\depth_B(a),B]$ such that $\emptyset \neq [a,A'] \subseteq [a,A]$.
    \end{enumerate}
    For each $A \in \cal{R}$, we let:
    \begin{align*}
        \cal{AR}\restrictedto A := \{a \in \cal{AR} : \exists n[a \leq_\fin r_n(A)]\}.
    \end{align*}
    If $a \in \cal{AR}\restrictedto A$, we also define:
    \begin{align*}
        \cal{AR}\restrictedto[a,A] &:= \{b \in \cal{AR}\restrictedto A : a \sqsubseteq b\}, \\
        r_n[a,A] &:= \{b \in \cal{AR}\restrictedto[a,A] : \lh(b) = n\}.
    \end{align*}

    \item[\textbf{(A4)}] If $\depth_B(a) < \infty$ and if $\O \subseteq \cal{AR}_{\lh(a)+1}$, then there exists $A \in [\depth_B(a),B]$ such that $r_{\lh(a)+1}[a,A] \subseteq \O$ or $r_{\lh(a)+1}[a,A] \subseteq \O^c$.
\end{enumerate}

We introduce a useful weakening of Axiom \textbf{A2}, which we shall call \emph{weak} \textbf{A2}, or just \textbf{wA2}.

\begin{axiom}[\textbf{wA2}]
    There is a quasi-ordering $\leq_\fin$ on $\cal{AR}$ such that:
    \begin{enumerate}
        \item[(w1)] $\{a \in \cal{AR} : a \leq_\fin b\}$ is countable for all $b \in \cal{AR}$.
    
        \item[(2)] $A \leq B$ iff $\forall n \exists m [r_n(A) \leq_\fin r_m(B)]$.
    
        \item[(3)] $\forall a,b \in \cal{AR}[a \sqsubseteq b \wedge b \leq_\fin c \to \exists d \sqsubseteq c[a \leq_\fin d]]$.
    \end{enumerate}
\end{axiom}

Note that by axiom \textbf{A1}, we may identify each element $A \in \cal{R}$ as a sequence of elements of $\cal{AR}$, via the map $A \mapsto (r_n(A))_{n<\omega}$. Therefore, we may identify $\cal{R}$ as a subset of $\cal{AR}^\N$.

\begin{definition}
    A triple $(\cal{R},\leq,r)$ is said to be:
    \begin{enumerate}
        \item a \emph{closed triple} if $\cal{R}$ is a metrically closed subset of $\cal{AR}^\N$.
        
        \item a \emph{\textbf{wA2}-space} if $(\cal{R},\leq,r)$ is a closed triple satisfying Axioms \textbf{A1}, \textbf{wA2} and \textbf{A3}.

        \item an \emph{\textbf{A2}-space} if it is a \textbf{wA2}-space satisfying \textbf{A2}.
    \end{enumerate}
\end{definition}

Given a \textbf{wA2}-space $(\cal{R},\leq,r)$, we shall forcus on the following two topologies on $\cal{R}$.
\begin{enumerate}
    \item The metrisable topology - we may equip $\cal{R}$ with the first difference metric, where for $A,B \in \cal{R}$, $d(A,B) = \frac{1}{2^n}$ where $n$ is the least integer such that $r_n(A) \neq r_n(B)$. If $\cal{AR}$ is countable, then under this metrisable topology, $\cal{R}$ is a Polish space.
    
    \item The Ellentuck topology generated by open sets of the form $[a,A]$ for $A \in \cal{R}$ and $a \in \cal{AR}\restrictedto A$.
\end{enumerate}

In this paper, unless stated otherwise all topological properties of $\cal{R}$ will be with respect to the metrisable topology.

\subsection{Examples}
We discuss several examples of \textbf{wA2}-spaces. All examples below, except for countable vector spaces and the singleton space (Example \ref{ex:vector.spaces} and \ref{ex:singleton.space}), may be found in \cite{T10}. Discussions of countable vector spaces and strategically Ramsey sets may be found in \cite{R08}, \cite{R09}, \cite{S18} and \cite{d20}. 

\begin{example}[Natural numbers/Ellentuck space $\lbrack \N\rbrack^\infty$]
\label{ex:natural.numbers}
    Let $\cal{R} = [\N]^\infty$. For each $n$ and $A \in [\N]^\infty$, let $r_n(A)$ be the finite set containing the $n$ least elements of $A$ (so $\cal{AR} = [\N]^{<\infty}$). Let $\leq_\fin$ denote the subset relation $\subseteq$. It is easy to check that $([\N]^\infty,\subseteq,r)$ is a closed triple satisfying \textbf{A1}-\textbf{A4}.
\end{example}

\begin{example}[Infinite block sequences/Gowers' space $\FIN_k^{[\infty]}$]
\label{ex:gowers.FIN_k}
    For each $k < \omega$, let $\FIN_k$ be the set of all functions $x : \N \to \N$ with finite support (i.e. the set $\supp(x) := \{n \in \N : x(n) > 0\}$ is finite), such that $\ran(x) \subseteq \{0,\dots,k\}$ and $k \in \ran(x)$. For $x,y \in \FIN_k$, we also define the following:
    \begin{enumerate}
        \item (Tetris operation) $T(x)(n) := \max\{x(n) - 1,0\} \in \FIN_{k-1}$.

        \item $x < y \iff \max(\supp(x)) < \min(\supp(y))$.

        \item If $x < y$, define $(x + y)(n) := \max\{x(n),y(n)\} \in \FIN_k$.
    \end{enumerate}
    It is easy to see that $<$ is transitive. We let $\cal{R} = \FIN_k^{[\infty]}$ be the set of all infinite $<$-increasing sequences, and for each $A = (x_n)_{n<\omega} \in \cal{R}$, $r_N(A) := (x_n)_{n<N}$. We have that $\cal{AR} = \FIN_k^{[<\infty]}$, the set of all finite $<$-increasing sequences.
    
    Given $a = (x_n)_{n<N} \in \FIN_k^{[<\infty]}$, we let:
    \begin{align*}
        \c{a} := \{T^{\lambda_0}(x_0) + \cdots + T^{\lambda_{N-1}}(x_{N-1}) : \lambda_i = 0 \text{ for some } i < N\}.
    \end{align*}
    For two $a,b \in \FIN_k^{[<\infty]}$, we write $a \leq_\fin b$ iff $\c{a} \subseteq \c{b}$. Then $(\FIN_k^{[\infty]},\leq,r)$ is a closed triple satisfying \textbf{A1}-\textbf{A4}. 
    
    $\FIN_k^{[\infty]}$ was first introduced by Gowers in \cite{G92} and \cite{G02} to resolve several long-standing problems in Banach space theory. The current formulation of $\FIN_k^{[\infty]}$ may be found in \cite{T10}. The fact that $(\FIN_k^{[\infty]},\leq,r)$ satisfies \textbf{A4} follows from Gowers' $\FIN_k$ theorem (Theorem 1 \cite{G92}, or Theorem 2.2, \cite{T10}).
\end{example}

\begin{example}[Infinite block sequences $\FIN_{\pm k}^{[\infty]}$]
\label{ex:gowers.FIN_pmk}
    Consider a setup simi;ar to Example \ref{ex:gowers.FIN_k}. Instead, we let $\FIN_{\pm k}$ be the set of all functions $x : \N \to \Z$ with finite support, such that $\ran(x) \subseteq \{-k,\dots,k\}$, and ($k \in \ran(x)$ or $-k \in \ran(x)$). The tetris operation is now modified to:
    \begin{align*}
        T(x)(n) := 
        \begin{cases}
            x(n) - 1, &\text{if $x(n) > 0$}, \\
            x(n) + 1, &\text{if $x(n) < 0$}, \\
            0, &\text{otherwise}.
        \end{cases}
    \end{align*}
    The rest of the setup is similar. We let $\cal{R} = \FIN_{\pm k}^{[\infty]}$ be the set of all infinite $<$-increasing sequences, so $\cal{AR} = \FIN_{\pm k}^{[<\infty]}$ is the set of all finite $<$-increasing sequences.
    
    Given $a = (x_n)_{n<N} \in \FIN_{\pm k}^{[<\infty]}$, we let:
    \begin{align*}
        \c{a} := \{\varepsilon_0 T^{\lambda_0}(x_0) &+ \cdots + \varepsilon_{N-1} T^{\lambda_{N-1}}(x_{N-1}) : \\
        &\varepsilon_0,\dots,\varepsilon_{N-1} \in \{\pm 1\} \text{ and } \lambda_i = 0 \text{ for some } i < N\}.
    \end{align*}
    Then $\FIN_{\pm k}^{[\infty]}$ is an \textbf{A2}-space. However, $\FIN_{\pm k}^{[\infty]}$ does not satisfy \textbf{A4} - for each $x \in \FIN_{\pm k}$, let:
    \begin{align*}
        n_x := \min\{n < \omega : x(n) \in \{\pm k\}\}.
    \end{align*}
    Now let:
    \begin{align*}
        Y := \{x \in \FIN_{\pm k} : x(n_x) = k\}.
    \end{align*}
    Then, for all $A \in \FIN_{\pm k}^{[\infty]}$, $\c{A} \cap Y \neq \emptyset$ and $\c{A} \cap Y^c \neq \emptyset$.

    $\FIN_{\pm k}^{[\infty]}$ was first introduced by Gowers in \cite{G92}, and the current formulation of $\FIN_{\pm k}^{[\infty]}$ may be found in \cite{T10}.
\end{example}

\begin{example}[Hales-Jewett space $W_{Lv}^{[\infty]}$]
    Let $L = \bigcup_{n=0}^\infty L_n$ be a countable increasing union of finite alphabets with variable $v \notin L$. Let $W_{Lv}$ denote the set of all \emph{variable-words} over $L \cup \{v\}$, i.e. all finite strings of elements of $L \cup \{v\}$ in which $v$ occurs at least once. For each $x \in W_{Lv}$ and $\lambda \in L \cup \{v\}$, we let $x[\lambda]$ denote the word in which all $v$'s occurring in $x$ are replaced with $a$.
    
    Given $x_0,\dots,x_n \in W_{Lv}$, we write $(x_i)_{i<n} < x_n$ iff $\sum_{i<n} |x_i| < |x_n|$. A sequence $(x_n)_{n<N}$ is \emph{rapidly increasing} if $(x_i)_{i<n} < x_n$ for all $n < N$. Let $\cal{R} = W_{Lv}^{[\infty]}$ be the set of all infinite rapidly increasing sequences, and for each $A = (x_n)_{n<\omega} \in \cal{R}$, $r_N(A) := (x_n)_{n<N}$. We have that $\cal{AR} = W_{Lv}^{[<\infty]}$, the set of all finite rapidly increasing sequences.
    
    Given $a = (x_n)_{n<N} \in W_{Lv}^{[<\infty]}$, we let:
    \begin{align*}
        \c{a} := \{x_{m_0}[\lambda_0]^\frown\cdots^\frown x_{\lambda_k}[\lambda_k] : m_0 < \cdots m_k \text{ and } \lambda_i = v \text{ for some } i\}.
    \end{align*}
    For two $a,b \in W_{Lv}^{[<\infty]}$, we write $a \leq_\fin b$ iff $\c{a} \subseteq \c{b}$, and $\c{a} \not\subseteq \c{c}$ for all $c \sqsubseteq b$ and $c \neq b$. Then $(W_{Lv}^{[\infty]},\leq,r)$ is a closed triple satisfying \textbf{A1}-\textbf{A4}.
    
    $W_{Lv}^{[\infty]}$ was first introduced by Hales-Jewett in \cite{HJ63}, and the current formulation of $W_{Lv}^{[\infty]}$ may be found in \cite{T10}.
\end{example}

\begin{example}[Strong subtrees $\S_\infty$]
\label{ex:strong.subtrees}
    Let $T \subseteq \omega^{<\omega}$ be a tree (i.e. a subset that is closed under initial segments). We introduce some terminologies.
    \begin{enumerate}
        \item A node $s \in T$ \emph{splits} in $T$ if there exist $m \neq n$ such that $s^\frown m \in T$ and $s^\frown n \in T$. For $n \geq 1$, we let $\func{split}_n(T)$ be the set of all $s \in T$ such that $s$ splits in $T$, and there are $k_1 < \cdots < k_{n-1} < \dom(s)$ such that $s\restrictedto k_i$ splits for $1 \leq i < n$. We then let $\func{split}(T) := \bigcup_{n<\omega}\func{split}_n(T)$.

        \item A node $s \in T$ is \emph{terminal} in $T$ if $s^\frown n \notin T$ for all $n$.

        \item The \emph{height} of a non-empty tree $T$ is defined by:
        \begin{align*}
            \height(T) := \max(\{0\} \cup \{n \geq 1 : \func{split}_n(T) \neq \emptyset\}).
        \end{align*}

        \item $T$ is \emph{perfect} if for all $s \in T$, either there exists some $t \sqsupseteq s$ such that $t$ splits in $T$, or $s$ is terminal in $T$.
    \end{enumerate}
    Let $\cal{R} = \S_\infty$ be the set of \emph{strong subtrees} $A \subseteq \omega^{<\omega}$. That is, $T$ is a perfect subtree which satisfies the following conditions:
    \begin{enumerate}
        \item For all $s,t \in A$ such that $\dom(s) = \dom(t)$, $s$ splits in $T$ iff $t$ splits in $A$.

        \item If $s \in A$ and there exists some $t \in A$ such that $\dom(s) < \dom(t)$, then there exists some $u \in A$ such that $s \sqsubseteq u$ and $\dom(u) = \dom(t)$.
    \end{enumerate}
    For each $A \in \S_\infty$ and $n < \omega$, we define:
    \begin{align*}
        r_n(A) := \{s \in A : |\{t \in A : t \sqsubseteq s \wedge t \text{ splits}\}| \leq n\}.
    \end{align*}
    We observe that if $A \in \S_\infty$, then $r_n(A)$ is a strong subtree of height $n$, so $\cal{AR} = \S_{<\infty}$ is the set of all strong subtrees of finite height. For $a,b \in \S_{<\infty}$, we write $a \leq_\fin b$ iff $a \subseteq b$, and for all nodes $s \in a$ which are terminal in $a$, $s$ splits in $b$ or is terminal $b$. Then $(\S_\infty,\leq,r)$ is a closed triple satisfying \textbf{A1}-\textbf{A4}.
    
    The space $\S_\infty$ is also known as the Milliken space of strong subtrees, and was first introduced by Milliken in \cite{M81} to generalise Silver's partition theorem for infinite trees. The theorem asserting that $(\S_\infty,\leq,r)$ satisfies \textbf{A4} (also proven in \cite{M81}) is the strong subtree variant of the Halpern-L\"{a}uchli theorem. The current formulation may be found in \cite{T10}.
\end{example}

\begin{example}[Carlson-Simpson Space $\E^\infty$]
\label{ex:carlson-simpson}
    Let $\cal{R} = \E_\infty$ denote the Carlson-Simpson space, i.e. the collection of equivalence relations $A$ on $\mathbb{N}$ in which $\mathbb{N}/A$ is infinite. For each $x \in \N$, let $[x]_A$ be the equivalence class of $A$ containing $x$. We then let $p(A) = \{p_n(A) : n < \omega\}$ be the increasing enumeration of the set of all minimal representatives of the equivalence classes of $A$. Note that $p_0(A) = 0$ always. For each $A \in \E_\infty$, we let $r_n(A) := A\restrictedto p_n(A)$, i.e. the restriction of the equivalence relation $A$ to the \emph{domain} $\dom(A\restrictedto p_n(A)) := \{0,1,\dots,p_n(A) - 1\}$. We denote $\cal{AE}_\infty := \cal{AR}$. For $a,b \in \cal{AE}_\infty$, we write $a \leq_\fin b$ iff $\dom(a) = \dom(b)$ and $a$ is coarser than $b$. We remark that for all $a \in \cal{AE}_\infty$, $\lh(a)$ is the number of equivalence classes in $a$. Then $(\E_\infty,\leq,r)$ is a closed triple satisfying \textbf{A1}-\textbf{A4}. 
    
    $\E^\infty$ was first introduced by Carlson-Simpson in \cite{CS90}, as part of their development of topological Ramsey theory. The current formulation may be found in \cite{T10}.
\end{example}

\begin{example}[Countable vector spaces $E^{[\infty]}$]
\label{ex:vector.spaces}
    Let $\mathbb{F}$ be a countable field, and let $E$ be an $\mathbb{F}$-vector space of dimension $\aleph_0$, with a distinguished Hamel basis $(e_n)_{n<\omega}$. Given $x \in E$, if $x = \sum_{n<\omega} a_ne_n$, we write $\supp(x) := \{n < \omega : a_n \neq 0\}$. We then define a partial order $<$ on $E \setminus \{0\}$ by:
    \begin{align*}
        x < y \iff \max(\supp(x)) < \min(\supp(y)).
    \end{align*}
    We let $\cal{R} := E^{[\infty]}$ denote the set of all infinite $<$-increasing sequences of non-zero vectors in $E$, and for each $A = (x_n)_{n<\omega} \in E^{[\infty]}$, define $r_N(A) := (x_n)_{n<N}$. We have that $\cal{AR} = E^{[<\infty]}$ is the set of finite $<$-increasing sequences of non-zero vectors. For two $a := (x_n)_{n<N}, b := (y_m)_{m<M} \in E^{[<\infty]}$, we write $(x_n)_{n<N} \leq_\fin (y_m)_{m<M}$ iff $x_n \in \func{span}\{y_m : m < M\}$ for all $n < N$. Then $(E^{[\infty]},\leq,r)$ is a \textbf{wA2}-space. Furthermore:
    \begin{enumerate}
        \item $(E^{[\infty]},\leq,r)$ is an \textbf{A2}-space iff $\mathbb{F}$ is finite - if $\mathbb{F}$ is finite, then for all $a = (x_n)_{n<N},b = (y_m)_{m<M} \in E^{[<\infty]}$, $a \leq_\fin b$ iff $x_n$ is an $\mathbb{F}$-linear combination of $y_0,\dots,y_{M-1}$, of which there are only finitely many of. If $\mathbb{F}$ is infinite, then $(e_0 + \lambda e_1) \leq_\fin (e_0,e_1)$ for all $\lambda \in \mathbb{F}$, so \textbf{A2} fails.
        
        \item $(E^{[\infty]},\leq,r)$ satisfies \textbf{A4} iff $|\mathbb{F}| = 2$. If $|\mathbb{F}| = 2$, then \textbf{A4} follows from Hindman's theorem (Theorem 2.41, \cite{T10}). If $|\mathbb{F}| > 2$, then define the set:
        \begin{align*}
            Y := \{x \in E : \text{$x = e_n + y$ for some $n$ and $e_n < y$}\}.
        \end{align*}
        It is not difficult to show that $\c{A} \cap Y \neq \emptyset$ and $\c{A} \cap Y^c \neq \emptyset$ for all $A \in E^{[<\infty]}$.
    \end{enumerate} 
    This Ramsey-theoretic framework of countable vector spaces were first introduced by Rosendal in \cite{R08} and \cite{R09}. He studied the set-theoretic properties of strategically Ramsey sets in this framework, a notion motivated by the Ramsey-theoretic methods employed by Gowers in \cite{G02}. Smythe studied the local Ramsey theory of this framework in \cite{S18}, extending some results by Rosendal to $\H$-strategically Ramsey sets, where $\H$ is a \emph{family} satisfying some combinatorial properties.
\end{example}

\begin{example}[Singleton space]
\label{ex:singleton.space}
    Let $\cal{R} = \{(0,0,\dots)\}$, the singleton containing the zero sequence. We define $r_n(A) := (0,\dots,0)$ of length $n$, and $\leq_\fin$ to be the equality relation. Then $(\cal{R},\leq,r)$ is a closed triple satisfying \textbf{A1}-\textbf{A4}. Then $(\cal{R},\leq,r)$ is a closed triple satisfying \textbf{A1}-\textbf{A4}. The singleton space serves as a pathological example of a topological Ramsey space.
\end{example}

\subsection{Ramsey sets}
The definition of a Ramsey subset of a topological Ramsey space may be extended to any \textbf{wA2}-spaces.

\begin{definition}
    Let $(\cal{R},\leq,r)$ be a \textbf{wA2}-space. A set $\X \subseteq \cal{R}$ is \emph{Ramsey} if for all $A \in \cal{R}$ and $a \in \cal{AR}\restrictedto A$, there exists some $B \in [a,A]$ such that $[a,B] \subseteq \X$ or $[a,B] \subseteq \X^c$.
\end{definition}

By the abstract Ellentuck theorem (Theorem 5.4, \cite{T10}), if $(\cal{R},\leq,r)$ is a closed triple satisfying \textbf{A1}-\textbf{A4}, a subset of $\cal{R}$ is Ramsey iff it is Baire relative to the Ellentuck topology. Since the Ellentuck topology refines the metrisable topology, every subset of $\cal{R}$ which is Baire relative to the metrisable topology is Ramsey. We show that \textbf{A4} is a necessary condition.

\begin{proposition}
\label{prop:a4.iff.ellentuck.theorem}
    Let $(\cal{R},\leq,r)$ be an \textbf{A2}-space. The following are equivalent:
    \begin{enumerate}
        \item $(\cal{R},\leq,r)$ satisfies \textbf{A4}.
        
        \item Every clopen subset of $\cal{R}$ is Ramsey,
    \end{enumerate}
\end{proposition}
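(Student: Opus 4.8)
The plan is to prove the two implications separately; $(1)\Rightarrow(2)$ is essentially a citation and $(2)\Rightarrow(1)$ carries all of the content. For $(1)\Rightarrow(2)$: an \textbf{A2}-space satisfying \textbf{A4} satisfies all of \textbf{A1}-\textbf{A4}, so by the abstract Ellentuck theorem (Theorem 5.4, \cite{T10}) a subset of $\cal{R}$ is Ramsey exactly when it has the Baire property relative to the Ellentuck topology; since that topology refines the metrisable one, any metrisably clopen set is Ellentuck-open, hence Ellentuck-Baire, hence Ramsey. I would spend no more than this sentence on that direction.

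For $(2)\Rightarrow(1)$ I would argue contrapositively: assuming \textbf{A4} fails, I will exhibit a clopen set that is not Ramsey. Fix witnesses $B\in\cal{R}$, $a\in\cal{AR}$ with $d:=\depth_B(a)<\infty$, and $\O\subseteq\cal{AR}_{\lh(a)+1}$ such that $r_{\lh(a)+1}[a,A]\not\subseteq\O$ and $r_{\lh(a)+1}[a,A]\not\subseteq\O^c$ for every $A\in[\depth_B(a),B]$. The natural candidate is
\[
    \X:=\{C\in\cal{R}:r_{\lh(a)+1}(C)\in\O\},
\]
which is clopen because $C\mapsto r_{\lh(a)+1}(C)$ depends only on the first $\lh(a)+1$ coordinates of $C\in\cal{AR}^\N$ and so is locally constant in the first-difference metric. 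As $d<\infty$ gives $a\in\cal{AR}\restrictedto B$, I would test the Ramsey property of $\X$ at the pair $(a,B)$.

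Suppose for contradiction that some $B_0\in[a,B]$ has $[a,B_0]\subseteq\X$ or $[a,B_0]\subseteq\X^c$; treat the first case (the second is the mirror image and yields $\subseteq\O^c$ below). Since $B_0\in[a,B_0]$ and $B_0\leq B$, axiom \textbf{A3}(2) produces $A'\in[\depth_B(a),B]$ with $\emptyset\neq[a,A']\subseteq[a,B_0]\subseteq\X$. I then want $\emptyset\neq r_{\lh(a)+1}[a,A']\subseteq\O$, contradicting the choice of $B,a,\O$. Nonemptiness: any $C\in[a,A']$ satisfies $r_{\lh(a)}(C)=a$ and, since $C\leq A'$, $r_{\lh(a)+1}(C)\leq_\fin r_m(A')$ for some $m$, so $r_{\lh(a)+1}(C)\in r_{\lh(a)+1}[a,A']$. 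Inclusion: for $b\in r_{\lh(a)+1}[a,A']$ we have $b\in\cal{AR}\restrictedto A'$, hence $\depth_{A'}(b)<\infty$ and $A'\in[\depth_{A'}(b),A']$; axiom \textbf{A3}(1) then gives some $C\in[b,A']$, and the \textbf{A1}-bookkeeping forces $r_{\lh(a)+1}(C)=b$ and $r_{\lh(a)}(C)=a$, so $C\in[a,A']\subseteq\X$ and thus $b=r_{\lh(a)+1}(C)\in\O$.

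The one genuinely delicate point is this last bridge. The Ramsey property hands back a homogeneous Ellentuck neighbourhood $[a,B_0]$ with $B_0\in[a,B]$, whereas \textbf{A4} quantifies only over $A\in[\depth_B(a),B]$; passing from the former to the latter is exactly the purpose of \textbf{A3}(2), and knowing that $r_{\lh(a)+1}[a,A']$ is genuinely realised by the one-step extensions of members of $[a,A']$ — which is what makes both the nonemptiness and the inclusion go through, so that "$\not\subseteq\O$ and $\not\subseteq\O^c$" actually bites — is exactly the purpose of \textbf{A3}(1). The remaining items (local constancy of $C\mapsto r_n(C)$ in the first-difference metric, the \textbf{A1} identification of initial approximations, and the symmetric $\X^c$ case) are routine.
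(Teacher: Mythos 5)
Your proof is correct and follows essentially the same route as the paper's: the same clopen test set $\X=\{C:r_{\lh(a)+1}(C)\in\O\}$, the same appeal to the abstract Ellentuck theorem for $(1)\Rightarrow(2)$, and the same use of \textbf{A3}(2) to move the homogeneous neighbourhood into $[\depth_B(a),B]$; phrasing $(2)\Rightarrow(1)$ contrapositively is only a cosmetic difference. Your extra care with \textbf{A3}(1), showing that every $b\in r_{\lh(a)+1}[a,A']$ is realised by some $C\in[a,A']$, fills in a step the paper leaves implicit.
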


\begin{proof}
    (1)$\implies$(2) follows from the abstract Ellentuck theorem. For the converse, let $A \in \cal{R}$, $a \in \cal{AR}\restrictedto A$ and $\O \subseteq \cal{AR}_{\lh(a)+1}$. Define:
    \begin{align*}
        \X := \{C \in \cal{R} : r_{\lh(a)+1}(C) \in \O\}.
    \end{align*}
    Since $\X$ is clopen, it is Ramsey. Therefore, there exists some $B \in [a,A]$ such that $[a,B] \subseteq \X$ or $[a,B] \subseteq \X^c$. If $[a,B] \subseteq \X$, then $r_{\lh(a)+1}(B) \subseteq \O$, and if $[a,B] \subseteq \X^c$, then $r_{\lh(a)+1}(B) \subseteq \O^c$. By \textbf{A3}, we may let $B' \in [\depth_A(a),A]$ such that $[a,B'] \subseteq [a,B]$, so $B'$ witnesses that \textbf{A4} holds for $\O$.
\end{proof}

Furthermore, Ramsey subsets of a \textbf{wA2}-space need not be closed under countable intersections (and are hence not closed under countable unions). 

\begin{example}[Countable vector space $E^{[\infty]}$]
    Let $\mathbb{F}$ be a countable field such that $|\mathbb{F}| > 2$, and let $E$ be an $\mathbb{F}$-vector space of dimension $\aleph_0$. Let $Y \subseteq E$ be the set defined in Example \ref{ex:vector.spaces} such that for all $A \in E^{[\infty]}$, $\c{A} \cap Y \neq \emptyset$ and $\c{A} \cap Y^c \neq \emptyset$. We define $Y_n \subseteq E$ for each $n$ such that $Y_n^c$ is finite for all $n$, and $Y = \bigcap_{n<\omega} Y_n$. This is possible as $E$ is countable.

    For each $n$, we let:
    \begin{align*}
        \X_n &:= \{(x_n)_{n<\omega} \in E^{[\infty]} : x_0 \in Y_n\}, \\
        \X &:= \{(x_n)_{n<\omega} \in E^{[\infty]} : x_0 \in Y\}.
    \end{align*}
    Note that $\X = \bigcap_{n<\omega} \X_n$. 
    
    \begin{claim}
        $\X_n$ is Ramsey for all $n$.
    \end{claim}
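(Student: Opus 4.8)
The plan is to verify the definition of Ramsey directly for a fixed $n$, splitting on whether $\lh(a) \geq 1$ or $a = \emptyset$. Fix $A = (a_m)_{m<\omega} \in E^{[\infty]}$ and $a \in \cal{AR}\restrictedto A$, and recall that in $E^{[\infty]}$ a sequence $B = (b_j)_{j<\omega}$ satisfies $B \leq A$ exactly when every $b_j$ lies in $\func{span}(A)$, while $[\emptyset, A] = \{B : B \leq A\}$. If $\lh(a) = M \geq 1$, then every $C \in [a, A]$ has $r_M(C) = a$, so $C$ has the same first vector $y_0$ as $a$; hence $[a,A]$ lies entirely in $\X_n$ or entirely in $\X_n^c$ according to whether $y_0 \in Y_n$, and it is enough to exhibit any $B \in [a,A]$, for then $[a,B] \subseteq [a,A]$ is homogeneous. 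For instance $B := (y_0, \dots, y_{M-1}, a_N, a_{N+1}, \dots)$ with $N := \depth_A(a)$ works: it is $<$-increasing because $y_{M-1}$, lying in $\func{span}\{a_0,\dots,a_{N-1}\}$, is supported below $a_N$; it satisfies $B \leq A$; and $r_M(B) = a$.

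The substantive case is $a = \emptyset$, where $[\emptyset, A] = \{B : B \leq A\}$ and, for each such $B$, $[\emptyset, B] = \{C : C \leq B\}$, whose members have first vectors ranging exactly over the nonzero elements of $\func{span}(B)$ (prepend a given nonzero $v$ to any tail of $B$ supported above $\supp(v)$, which exists because successive supports along $B$ are disjoint and increase to infinity). So it suffices to find $B \leq A$ with $\func{span}(B) \setminus \{0\} \subseteq Y_n$, for then $[\emptyset, B] \subseteq \X_n$. This is where the hypothesis that $Y_n^c$ is \emph{finite} enters: write $Y_n^c = \{z_1,\dots,z_k\}$, expand those $z_i$ that lie in $\func{span}(A)$ in the linearly independent block sequence $A$, pick $N'$ beyond every index occurring in these expansions, and put $B := (a_m)_{m \geq N'}$. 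Linear independence of $A$ yields $\func{span}\{a_m : m < N'\} \cap \func{span}\{a_m : m \geq N'\} = \{0\}$, so no $z_i$ lies in $\func{span}(B) \setminus \{0\}$ --- neither those in $\func{span}(A)$ (which sit in $\func{span}\{a_m : m < N'\}$) nor those outside it --- whence $\func{span}(B) \setminus \{0\} \subseteq Y_n$; and $B \leq A$, so $B \in [\emptyset, A]$, completing the argument.

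The auxiliary facts used here --- that a $<$-increasing sequence of nonzero vectors is linearly independent, that the tail $B$ satisfies $B \leq A$, and that every nonzero $v \in \func{span}(B)$ occurs as a first vector of some $C \leq B$ --- are all routine, so I do not anticipate a genuine obstacle; the one point worth emphasizing is structural rather than technical. In the case $a = \emptyset$, the alternative $[\emptyset, B] \subseteq \X_n^c$ is impossible, since it would force the infinite set $\func{span}(B) \setminus \{0\}$ into the finite set $Y_n^c$, so the argument is necessarily driven onto the $\X_n$ side; and it is exactly this asymmetry that will fail for $\X$ itself, where every $B$ has $\func{span}(B)$ meeting both $Y$ and $Y^c$. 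In particular one cannot shortcut the claim by arguing that $\X_n$, being clopen, is automatically Ramsey: $E^{[\infty]}$ with $|F| > 2$ does not satisfy \textbf{A4}, so by Proposition \ref{prop:a4.iff.ellentuck.theorem} it has clopen subsets that are not Ramsey.
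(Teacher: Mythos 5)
Your proof is correct and follows essentially the same route as the paper: for $a \neq \emptyset$ the set $[a,A]$ is automatically homogeneous since the first vector is already determined, and for $a = \emptyset$ one uses the finiteness of $Y_n^c$ to pass to a tail $B \leq A$ whose span avoids $Y_n^c$. One minor point in your favour: the paper's version concludes $[\emptyset,B] \subseteq \X_n^c$ in the empty case, which appears to be a typo for $[\emptyset,B] \subseteq \X_n$ --- your conclusion is the right one, and either way the set is homogeneous, which is all the claim needs.
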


    \begin{midproof}
        Let $A \in E^{[\infty]}$ and $a \in E^{[<\infty]}\restrictedto A$. If $a = (x_0,\dots,x_{n-1}) \neq \emptyset$, then $[a,A] \subseteq \X_n$ or $\X_n^c$, depending if $x_0 \in Y_n$ or $x_0 \in Y_n^c$. Otherwise, let $B \leq A$ be such that $x < B$ for all $x \in Y_n^c$, which is possible as $Y_n^c$ is finite. Then $[\emptyset,B] \subseteq \X_n^c$.
    \end{midproof}

    However, $\X$ is not Ramsey - for all $A \in E^{[\infty]}$, there exist $(x_n)_{n<\omega} \leq A$ and $(y_n)_{n<\omega} \leq A$ such that $x_0 \in Y$ and $y_0 \in Y^c$, so $[\emptyset,A] \not\subseteq \X$ and $[\emptyset,A] \not\subseteq \X^c$.
\end{example}

These observations show that Ramsey sets in \textbf{wA2}-spaces are not as well-behaved as Ramsey sets in topological Ramsey spaces, prompting us to consider an alternative notion of Ramsey sets in \textbf{wA2}-spaces - one example being the notion of Kastanas Ramsey. 

\section{The Kastanas game in \textbf{wA2}-spaces}
We shall introduce the abstract Kastanas game in \textbf{wA2}-spaces, and study the set-theoretic properties of Kastanas Ramsey sets. 

\subsection{The abstract Kastanas game}
\begin{definition}[Definition 5.1, \cite{CD23}]
\label{def:kastanas.game}
    Let $(\cal{R},\leq,r)$ be a \textbf{wA2}-space. Let $A \in \cal{R}$ and $a \in \cal{AR}\restrictedto A$. The \emph{Kastanas game} played below $[a,A]$, denoted as $K[a,A]$, is defined as a game played by Player \textbf{I} and \textbf{II} in the following form:
    \begin{center}
        \begin{tabular}{c|c|c}
            Turn & \textbf{I} & \textbf{II} \\
            \hline
            1 & $A_0 \in [a,A]$ & \\
            & & $a_1 \in r_{\lh(a)+1}[a,A_0]$ \\
            & & $B_0 \in [a_1,A_0]$ \\
            \hline
            2 & $A_1 \in [a_1,B_0]$ & \\
            & & $a_2 \in r_{\lh(a_1)+1}[a_1,A_1]$ \\
            & & $B_1 \in [a_2,A_1]$ \\
            \hline
            3 &  $A_2 \in [a_2,B_1]$ & \\
            & & $a_3 \in r_{\lh(a_2)+1}[a_2,A_2]$ \\
            & & $B_2 \in [a_3,A_2]$ \\
            \hline
            $\vdots$ & $\vdots$ & $\vdots$
        \end{tabular}
    \end{center}
    The outcome of this game is $\lim_{n \to \infty} a_n \in \cal{R}$ (i.e. the unique element $B \in \cal{R}$ such that $r_n(B) = a_n$ for all $n$). We say that \textbf{I} (resp. \textbf{II}) has a strategy in $K[a,A]$ to \emph{reach} $\X \subseteq \cal{R}$ if it has a strategy in $K[a,A]$ to ensure that the outcome is in $\X$. 
\end{definition}

Note that we do not require $(\cal{R},\leq,r)$ to satisfy either \textbf{A2} or \textbf{A4} for the game to make sense. In particular, we may consider the abstract Kastanas game in countable vector spaces.

\begin{definition}
\label{def:kastanas.ramsey}
    Let $(\cal{R},\leq,r)$ be a \textbf{wA2}-space. A set $\X \subseteq \cal{R}$ is \emph{Kastanas Ramsey} if for all $A \in \cal{R}$ and $a \in \cal{AR}\restrictedto A$, there exists some $B \in [a,A]$ such that one of the following holds:
    \begin{enumerate}
        \item \textbf{I} has a strategy in $K[a,B]$ to reach $\X^c$.

        \item \textbf{II} has a strategy in $K[a,B]$ to reach $\X$.
    \end{enumerate}
\end{definition}

The seemingly unintuitive decision to define Kastanas Ramsey sets such that \textbf{I} plays into the set $\X^c$, instead of $\X$, allows us to describe the relationship between Kastanas Ramsey sets and strategically Ramsey sets, projections and projective sets more easily.

We conclude the section with some definitions and notations which are useful in studying the Kastanas game. 

\begin{definition}
\label{def:states}
    Let $(\cal{R},\leq,r)$ be a \textbf{wA2}-space, and let $A \in \cal{R}$ and $a \in \cal{AR}\restrictedto A$. Consider the Kastanas game $K[a,A]$.
    \begin{enumerate}
        \item A \emph{(partial) state} is a tuple containing the plays made by both players in a partial play of the game $K[a,A]$. For instance, a state ending on the $n^\text{th}$ turn of \textbf{I} would be:
        \begin{align*}
            s = (A_0,a_1,B_0,A_1,\dots,A_{n-1})
        \end{align*}
        The \emph{rank} of $s$ would be the turn number in which the last play was made (so, in the example above, $\rank(s) = n$).

        A state \emph{for \textbf{I}} (resp. \emph{for \textbf{II}}) is a state as defined above, except only the plays made by \textbf{I} (resp. by \textbf{II}) are listed in the tuple. For instance a state for \textbf{I} would be:
        \begin{align*}
            s_\textbf{I} = (A_0,A_1,\dots,A_{n-1}),
        \end{align*}
        and a state for \textbf{II} would be:
        \begin{align*}
            s_\textbf{II} = (a_1,B_0,\dots,a_n,B_{n-1}).
        \end{align*}

        \item If $s = (A_0,a_1,B_0,\dots,A_n)$ is a state of rank $n$, then the \emph{realisation} of $s$, denoted as $a(s)$, is the element of $\cal{AR}$ last played by \textbf{II}, i.e. $a_{n-1}$. We also say that $s$ \emph{realises} $a(s)$. If $\rank(s) = 0$, then $a(s) := a$. 

        If $\sigma$ is a strategy for \textbf{I} (resp. \textbf{II}) in $K[a,A]$ and $s$ is state for \textbf{I} (resp. \textbf{II}) following $\sigma$, then $a(s)$ is understood to mean the element $a(s')$ (i.e. $a_n$), where $s'$ is the state, following $\sigma$, such that $s$ is the play made by \textbf{I} (resp. \textbf{II}) in $s'$.

        \item A \emph{total state} $s$ is an infinite sequence of plays made by both players in a total play of the game $K[a,A]$. Thus, a total state $s$ would be of the form:
        \begin{align*}
            s = (A_0,a_1,B_0,A_1,a_2,B_1,\dots).
        \end{align*}
        The \emph{realisation} of $s$ would be the element $A(s) := \lim_{n \to \infty} a_n$, i.e. the unique element $A(s) \in \cal{R}$ such that $r_{\lh(a_0)+n}(A) = a_n$ for all $n$.

        \item If $s$ is a state (for \textbf{I} or \textbf{II} resp.), and $n$ is such that either $n \leq \rank(s)$ or $s$ is a total state, then write the \emph{restriction} of $s$ to rank $n$, denoted $s\restrictedto n$, as the partial state (for \textbf{I} or \textbf{II} resp.) following $s$ up to turn $n$ of $s$.
    \end{enumerate}
\end{definition}

\begin{definition}
\label{def:state.last}
    If $s = (A_0,a_1,B_0,\dots,A_n)$ is a state of rank $n$ ending with a play by \textbf{I}, then $\last(s) := A_n$. If $s = (A_0,a_1,B_0,\dots,A_n,a_{n+1},B_n)$ is a state of rank $n$ ending with a play by \textbf{II}, then $\last(s) := B_n$. We also define $\last(\emptyset) := A$. 
\end{definition}

\subsection{Basic Properties}
Let $(\cal{R},\leq,r)$ be a \textbf{wA2}-space. We let $\bcal{KR}$ denote the set of all Kastanas Ramsey subsets of $\cal{R}$, and let $\bar{\bcal{KR}}$ be the set of all subsets of $\cal{R}$ whose complement is Kastanas Ramsey. In this section, we study various set-theoretic properties of $\bcal{KR}$. We state some positive results that are analogous to those of strategically Ramsey sets presented in \cite{R08}. The proofs are inspired by those shown in the same article.

\begin{lemma}
\label{lem:kr.general.dichotomy}
    Let $(\cal{R},\leq,r)$ be a \textbf{wA2}-space. Let $\X_n \subseteq \cal{R}$ for each $n$, and let $\X := \bigcap_{n<\omega} \X_n$. For any $A \in \cal{R}$ and $a \in \cal{AR}\restrictedto A$, there exists some $B \in [a,A]$ such that one of the following must hold:
    \begin{enumerate}
        \item \textbf{I} has a strategy in $K[a,B]$ to reach $\X$.

        \item \textbf{II} has a strategy $\tau$ in $K[a,B]$ such that the following holds: For any total state $s$ following $\tau$, there exists some $n$ such that if $a(s\restrictedto n) = a_n$ and $\last(s\restrictedto n) = B_n$, then \textbf{I} has no strategy in $K[a_n,B_n]$ to reach $\X_n$.
    \end{enumerate}
\end{lemma}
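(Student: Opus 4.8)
The plan is to run the standard "unfolding / strategy-stealing" argument: fix $A$ and $a$, and suppose alternative (1) fails, i.e. for every $B \in [a,A]$, Player \textbf{I} has no strategy in $K[a,B]$ to reach $\X = \bigcap_n \X_n$. I want to produce a single $B \in [a,A]$ and a strategy $\tau$ for \textbf{II} in $K[a,B]$ witnessing (2). The key idea is that "failure of a \textbf{I}-strategy to reach $\X$" should be pushed down to "failure of a \textbf{I}-strategy to reach some $\X_n$" along a run, and the natural device is a \emph{bookkeeping/diagonalisation} over the countably many partial states for \textbf{I}. Concretely, I would first observe a monotonicity/persistence fact: if \textbf{I} has no strategy in $K[a_n, B_n]$ to reach $\X$, then because $\X = \bigcap_n \X_n$, in the game tree below $[a_n,B_n]$ there must be a position from which \textbf{I} is prevented from reaching \emph{every} $\X_m$ simultaneously; the point is to extract, via \textbf{II}'s moves, a position where \textbf{I} already cannot reach a \emph{single} $\X_n$ — and the index $n$ we can afford to be the turn number, which is what makes the diagonal work.

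The construction of $\tau$ proceeds by recursion on the turn number. Maintaining the invariant that after \textbf{II}'s move on turn $n$ the current position $s$ satisfies "\textbf{I} has no strategy in $K[a(s), \last(s)]$ to reach $\X$", I let \textbf{II} respond to \textbf{I}'s move $A_n$ as follows. Since \textbf{I} has no strategy to reach $\X$ from the relevant position even after committing to $A_n$ (one has to check this survives \textbf{I}'s move, using that \textbf{I} choosing $A_n$ is itself a partial strategy — this is where Definition 3.1's move order is used), and since $\X = \bigcap_m \X_m$, Player \textbf{I} cannot have, for the position reached, a strategy to reach \emph{all} $\X_m$; I then want to pick the extension $a_{n+1} \in r_{\lh(a(s))+1}[a(s),A_n]$ and $B_n \in [a_{n+1},A_n]$ so that \textbf{I} has no strategy in $K[a_{n+1},B_n]$ to reach $\X_{n+1}$ (using the turn number as the index), while also preserving the master invariant "no strategy to reach $\X$" so that the recursion can continue. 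The compatibility of these two demands is the crux: I expect to get it by a case split — either along \emph{every} legal \textbf{II}-response \textbf{I} regains a strategy to reach $\X$ (which, by a fusion/König-type argument over the countably many responses, would assemble into a \textbf{I}-strategy to reach $\X$ from $s$, contradicting the invariant), or there is a response killing the $\X$-strategy, and among such responses a further refinement kills the $\X_{n+1}$-strategy too (again because $\X \subseteq \X_{n+1}$, so "no strategy to reach $\X$" is weaker than what we need and must be sharpened; the finiteness is not available here, but countability of $r_{\lh(\cdot)+1}[\cdot,\cdot]$ together with $\cal{AR}$-level bookkeeping should suffice). Finally, $B := \lim_n \last(\tau$-run against a fixed auxiliary play$)$ is not quite right since $B$ must be chosen in advance; instead I take $B \in [a,A]$ to be any fusion of the first components, or simply note the recursion can be arranged so that \textbf{II}'s first-move options already live below a single $B_0 \in [a,A]$ obtained by \textbf{A3}, and set $B := A$ works if we allow \textbf{I}'s first move $A_0 \in [a,B]$ to do the initial refinement — I would set $B := A$ and let the invariant be seeded by the hypothesis applied to $A$ itself.

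The main obstacle, and the step I would spend the most care on, is the simultaneous maintenance of the two invariants ("\textbf{I} has no strategy to reach $\X$" globally, and "\textbf{I} has no strategy to reach $\X_n$" at stage $n$) across \textbf{II}'s single move, because unlike the classical Ellentuck/$[\N]^\infty$ setting there is no finiteness of $\cal{AR}_{n}$ to run a pigeonhole — only countability from axiom (w1). The resolution I anticipate is a \emph{strategy-amalgamation} lemma: if for cofinally/"everywhere" many legal responses of \textbf{II} Player \textbf{I} has a strategy to reach a fixed target $\Y$, then \textbf{I} has a strategy to reach $\Y$ from the position before \textbf{II}'s response (just wait for \textbf{II}'s response, then follow the corresponding strategy) — this is valid regardless of how many responses there are, since \textbf{I} only needs to react to the one response actually played. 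Dualising, "no \textbf{I}-strategy to reach $\Y$" gives a \emph{single} \textbf{II}-response after which still no \textbf{I}-strategy to reach $\Y$; applying this twice (once for $\Y = \X$, once for $\Y = \X_{n+1}$) and composing the two responses — legal because the response spaces are nested via \textbf{A3} — yields the move \textbf{II} needs. I would isolate this amalgamation observation as a preliminary sub-claim, prove it in one line from the definition of "strategy to reach", and then the recursion is bookkeeping.
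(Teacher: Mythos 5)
Your overall direction is reversed relative to what the lemma actually requires, and the step you yourself flag as the crux is exactly where the argument breaks. You assume alternative (1) fails for every $B$ and try to build \textbf{II}'s strategy for (2) by maintaining the invariant ``\textbf{I} has no strategy to reach $\X$ from the current position'' and then, at turn $n$, ``applying the amalgamation twice, once for $\Y = \X$ and once for $\Y = \X_{n+1}$, and composing the two responses.'' The amalgamation observation itself is fine, but its dual form --- if \textbf{I} has no strategy to reach $\Y$ from a position, then after any move of \textbf{I} there is a response of \textbf{II} after which \textbf{I} still has no strategy to reach $\Y$ --- needs the hypothesis ``no strategy to reach $\Y$'' for the \emph{same} $\Y$ that you want to preserve. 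You have this for $\Y = \X$ but not for $\Y = \X_{n+1}$: since $\X \subseteq \X_{n+1}$, reaching $\X_{n+1}$ is \emph{easier} for \textbf{I}, and it is entirely consistent with your invariant that \textbf{I} has a strategy to reach $\X_{n+1}$ (indeed, to reach every individual $\X_m$) from every reachable position while having no strategy to reach the intersection. In that situation no response of \textbf{II} ``kills the $\X_{n+1}$-strategy,'' and your second application of the amalgamation has nothing to act on. Countability of the response set is irrelevant here; this is not a pigeonhole or bookkeeping issue.

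Ruling out the scenario ``\textbf{I} has a strategy for each $\X_m$ at each reachable position but no strategy for $\X$'' is the entire content of the lemma, and the proof goes through the opposite contrapositive: assume (2) fails for every $B$, use a recursive characterisation of the failure of (2) (the Claim in the paper's proof) to find, below each reachable position at turn $n$, a further restriction $A_s$ from which \textbf{I} has a strategy $\sigma_s$ to reach $\X_n$ \emph{and} below which (2) keeps failing, and then \emph{fuse} the resulting countable family of strategies into a single strategy for \textbf{I} to reach $\bigcap_n \X_n$. The fusion is the delicate part: strategies for \textbf{I} cannot simply be intersected, since two of them may demand moves $A$ and $A'$ with $[a_n,A] \cap [a_n,A']$ empty; instead one nests them, playing each $\sigma_{s\restrictedto(i+1)}$ inside the moves produced by $\sigma_{s\restrictedto i}$ (the auxiliary states $t^i_s$ in the paper's proof), so that the final outcome is simultaneously an outcome of a $\sigma_{s \restrictedto n}$-run reaching $\X_n$ for every $n$. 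This interleaving is the main idea of the proof and is absent from your proposal; the ``one-line sub-claim'' you hope to isolate does not exist, and without it your recursion cannot even get past a single turn.
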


See also Lemma 4, \cite{R08}.

\begin{proof}
    Let $\{\X_n\}_{n<\omega}$ be a countable family of subsets of $\cal{R}$, and let $\X := \bigcap_{n<\omega} \X_n$. Fix any $A \in \cal{R}$ and $a \in \cal{AR}\restrictedto A$, and assume that (2) fails for all $B \in [a,A]$. In particular, applying $n = 0$ to the negation of (2), \textbf{I} has a strategy in $K[a,B]$ to reach $\X_0$ for all $B \in [a,A]$.

    If $B \in [a,A]$ and $b \in \cal{AR}\restrictedto[a,B]$, say that ``(2) holds for $(b,B)$'' if \textbf{II} has a strategy $\tau$ in $K[b,B]$ such that, for any total state $s$ following $\tau$, there exists some $n \geq \lh(b) - \lh(a)$ such that if $\last(s\restrictedto(n - \lh(b) + \lh(a))) = B_n$ and $a(s\restrictedto(n - \lh(b) + \lh(a))) = a_n$, then \textbf{I} has no strategy in $K[a_n,B_n]$ to reach $\X_n$. Note that this would also imply that \textbf{I} has no strategy in $K[a_n,C]$ to reach $\X_n$ for all $C \in [a_n,B_n]$. 

    \begin{claim}
        For all $B \in [a,A]$ and $b \in \cal{AR}\restrictedto[a,B]$, (2) holds for $(b,B)$ iff for all $A' \in [b,B]$, there exists some $b' \in r_{\lh(b)+1}[b,A']$ and $B' \in [b',A']$ such that (2) holds for $(b',B')$.
    \end{claim}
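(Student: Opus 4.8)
The plan is to prove the claim by a direct unravelling of the definitions of the Kastanas game and the auxiliary relation ``(2) holds for $(b,B)$'', exploiting that this relation is essentially a game-theoretic fixed point of the operation described on the right-hand side. The key observation is that a single round of $K[b,B]$ consists precisely of: \textbf{I} choosing $A_0 \in [b,B]$, then \textbf{II} choosing $b_1 = a_1 \in r_{\lh(b)+1}[b,A_0]$ and $B_0 \in [b_1,A_0]$; after this the game continues as $K[b_1,B_0]$. So the right-to-left direction is the natural ``\textbf{II} wins by waiting'' argument: given, for every $A' \in [b,B]$, a choice of $b' \in r_{\lh(b)+1}[b,A']$, $B' \in [b',A']$ and a strategy $\tau_{A'}$ witnessing that (2) holds for $(b',B')$, I would define a strategy $\tau$ for \textbf{II} in $K[b,B]$ that responds to \textbf{I}'s first move $A_0$ by playing $b' $ and $B'$ for $A' = A_0$, and thereafter copies $\tau_{A_0}$. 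Any total state $s$ following $\tau$ restricts after its first round to a total state $s^*$ of $K[b',B']$ following $\tau_{A_0}$; applying the witnessing property of $\tau_{A_0}$ to $s^*$ produces the required $n$ (after the index shift by one round, i.e. by $\lh(b') - \lh(b) = 1$, which is absorbed into the ``$n \geq \lh(b) - \lh(a)$'' bookkeeping since $\lh(b') = \lh(b)+1$), together with the corresponding $a_n, B_n$ such that \textbf{I} has no strategy in $K[a_n,B_n]$ to reach $\X_n$.

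For the left-to-right direction I would argue contrapositively: suppose there is some $A' \in [b,B]$ such that for \emph{every} $b' \in r_{\lh(b)+1}[b,A']$ and every $B' \in [b',A']$, (2) fails for $(b',B')$. I must show (2) fails for $(b,B)$, i.e. that \textbf{II} has no strategy $\tau$ in $K[b,B]$ with the stated property. The idea is that \textbf{I}, playing $A_0 := A'$ on the first turn, forces \textbf{II} to reveal some $b_1 \in r_{\lh(b)+1}[b,A']$ and $B_0 \in [b_1,A']$; since (2) fails for $(b_1,B_0)$, the strategy $\tau$ restricted past this first round is not a witness for $(b_1,B_0)$, so there is a total state $s^*$ of $K[b_1,B_0]$ following $\tau$'s residual strategy along which the witnessing property fails at \emph{every} index; prepending \textbf{I}'s move $A_0 = A'$ gives a total state $s$ of $K[b,B]$ following $\tau$ along which the property fails at every index $n \geq \lh(b)-\lh(a)$ (for $n = \lh(b)-\lh(a)$ directly, by definition of $A'$ applied to the first round; for larger $n$ by the choice of $s^*$). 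Hence $\tau$ is not a witness, and since $\tau$ was arbitrary, (2) fails for $(b,B)$.

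The main obstacle — and the place where care is genuinely needed — is the index bookkeeping: the statement of ``(2) holds for $(b,B)$'' is phrased with the shifted quantifier ``$n \geq \lh(b) - \lh(a)$'' and with restrictions of the form $s\restrictedto(n - \lh(b) + \lh(a))$, precisely so that the witnessing index $n$ refers to the turn number \emph{relative to the original game $K[a,A]$} rather than to the subgame. When I pass from $K[b,B]$ to the residual subgame $K[b',B']$ after one round, $\lh$ increases by exactly $1$ and the rank of the subgame's states is one less than that of the corresponding states of $K[b,B]$; I need to check that these two shifts cancel, so that an index $n$ witnessing the property in $K[b',B']$ (relative to $K[a,A]$) is the \emph{same} $n$ witnessing it in $K[b,B]$, and that the inequality $n \geq \lh(b') - \lh(a) = \lh(b) + 1 - \lh(a)$ transfers correctly — in the right-to-left direction the base case $n = \lh(b) - \lh(a)$ is never needed (the subgame's witness already gives $n \geq \lh(b)+1-\lh(a)$), while in the left-to-right direction the base case is exactly what the ``for all $A' \in [b,B]$'' hypothesis handles. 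Once this indexing is set up correctly, both directions are routine verifications of strategy-following.
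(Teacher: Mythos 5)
Your right-to-left direction is correct and is exactly the paper's argument: \textbf{II} answers \textbf{I}'s opening move $A_0$ with the chosen $(b',B')$ for $A'=A_0$ and thereafter copies $\tau_{A_0}$, and your index bookkeeping is right --- the subgame's witness $n \geq \lh(b')-\lh(a) = \lh(b)+1-\lh(a)$ transfers verbatim because the rank shift and the $\lh$ shift cancel, and the base index is never needed in this direction.

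The left-to-right direction has a genuine gap, located precisely at the base index that you assert is ``exactly what the `for all $A' \in [b,B]$' hypothesis handles.'' Unwind what failure of the witnessing property at $n = \lh(b)-\lh(a)$ means: there $s\restrictedto 0 = \emptyset$, so $a_n = b$ and $B_n = B$, and the property fails at this index iff \textbf{I} \emph{has} a strategy in $K[b,B]$ to reach $\X_{\lh(b)-\lh(a)}$. That is a statement about the pair $(b,B)$ and the single set $\X_{\lh(b)-\lh(a)}$; the contrapositive hypothesis --- that some $A' \in [b,B]$ has the property that (2) fails for every $(b',B')$ one level down --- gives no information about it, so your bad total state $s$ is only guaranteed to violate the property at indices $n \geq \lh(b)+1-\lh(a)$. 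The gap is not cosmetic: if \textbf{I} has no strategy in $K[b,B]$ to reach $\X_{\lh(b)-\lh(a)}$ (say $\X_{\lh(b)-\lh(a)} = \emptyset$) while $\X_m = \cal{R}$ for all $m > \lh(b)-\lh(a)$, then (2) holds for $(b,B)$ via the state-independent base witness, yet (2) fails for every $(b',B')$, since any witness there would have to be an index $m \geq \lh(b)+1-\lh(a)$ at which \textbf{I} cannot reach $\X_m = \cal{R}$ --- impossible. So the left-to-right implication cannot be proved as stated. In fairness, the paper's own one-line proof of that direction (``restrict $\tau$ to $K[b',B']$'') founders on the same degenerate witness; what saves the lemma is that it only ever invokes the claim in the form ``(2) fails for $(b,B)$ $\Rightarrow$ the right-hand side fails,'' i.e.\ the contrapositive of the right-to-left direction, which you do prove correctly.
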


    \begin{midproof}
        \underline{$\implies$:} Let $\tau$ be the strategy in $K[b,B]$ witnessing that (2) holds for $(b,B)$. Given any $A' \in [b,B]$, consider the play where \textbf{I} begins with $A$, and player \textbf{II} responds with $b' \in r_{\lh(b)+1}[b,B]$ and $B' \in [b',A']$ according to $\tau$. The restriction of the strategy $\tau$ to $K[b',B']$ is a strategy witnessing that (2) holds for $(b',B')$.

        \underline{$\impliedby$:} Suppose that for all $A' \in [b,B]$, there exists some $b' \in r_{\lh(b)+1}[b',A']$, $B' \in [b',A']$ and strategy $\tau_{A'}$ witnessing that (2) holds for $(b',B')$. Define the strategy $\tau$ in $K[b,B]$ such that if \textbf{I} begins with $A'$, then \textbf{II} responds with $b'$ and $B'$, then continue according to $\tau_{A'}$. This gives a strategy witnessing that (2) holds for $(b,B)$.
    \end{midproof}
    
    Let $\O_\emptyset$ denote the set of all $a' \in r_{\lh(a)+1}[a,A]$ such that for all $B \in [a',A]$, (2) does not hold for $(a',B)$. Since we assumed that (2) fails for all $B \in [a,A]$, by the previous claim there exists some $A_\emptyset \in [a,A]$ such that $r_{\lh(a)+1}[a,A'] \subseteq \O_\emptyset$. As stated in the first paragraph, \textbf{I} has a strategy $\sigma_\emptyset$ in $K[a,A_\emptyset]$ to reach $\X_0$. To finish the proof, we shall construct a strategy $\sigma$ for \textbf{I} in $K[a,A_\emptyset]$ to reach $\X$.
    
    For the rest of this proof, all states are assumed to be for \textbf{II}. Let $\sigma(\emptyset) := \sigma_\emptyset(\emptyset)$. Now suppose for each state $s$ of $K[a,A_\emptyset]$ following $\sigma$ of rank $n$, we have the following defined:
    \begin{enumerate}
        \item If $n > 0$ and $s' = s\restrictedto(n - 1)$, then $A_s \in [a(s),A_{s'}]$.

        \item $\sigma_s$ is a strategy for \textbf{I} in $K[a(s),A_s]$ to reach $\X_n$.

        \item If $i \leq n$, then $t_s^i$ is a state of $K[a(s\restrictedto i),A_{s\restrictedto i}]$ following $\sigma_{s\restrictedto i}$ of rank $n - i$, and $a(t_s^i) = a(s\restrictedto i)$.

        \item $\sigma_{s\restrictedto(i+1)}(t_s^{i+1}) \in [a_s,\sigma_{s\restrictedto i}(t_s^i)]$, and $\sigma(s) \in [a(s),\sigma_s(t_s^n)]$.

        \item For all $b \in r_{\lh(a(s))+1}[a(s),A_s]$ and $B \in [b,A_s]$, (2) does not hold in $(b,B)$.
    \end{enumerate}
    Now let $s$ be a state of $K[a,A_\emptyset]$ following $\sigma$ of rank $n + 1$. We may write $s = (s\restrictedto n)^\frown(a(s),B_s)$. Since $B_s \in [a(s),\sigma(s\restrictedto n)] \subseteq [a(s),\sigma_\emptyset(t_{s\restrictedto n}^0)]$, we may define $t_s^0 := {t_{s\restrictedto n}^0}^\frown(a(s),B_s)$, which is a legal state in $K[a,A_\emptyset]$ following $\sigma_\emptyset$. We have $\sigma_\emptyset(t_s^0) \in [a(s),B_s] \subseteq [a(s),\sigma_{s\restrictedto 1}(t_{s\restrictedto n}^1)]$, so we may define $t_s^1 := {t_{s\restrictedto n}^1}^\frown(a(s),\sigma_\emptyset(t_s^0))$. This again, gives us a legal state in $K[a(s\restrictedto 1),A_{s\restrictedto 1}]$ following $\sigma_{s\restrictedto 1}$. We may repeat this process to give us states $t_s^i$ for $i \leq n$. We let $A_s' := \sigma_{s\restrictedto n}(t_s^n) \in [a(s\restrictedto n),A_{s\restrictedto n}]$. 

    Let $\O_s$ be the set of all $a' \in r_{\lh(a(s))+1}[a(s),A_s']$ such that there exists some $B \in [a(s),A_s']$ in which (2) holds for $(a',B)$. By (5) of the induction hypothesis and the claim, we may obtain $A_s \in [a(s),A_s']$ such that $r_{\lh(a(s))+1}[a(s),A_s] \subseteq \O_s^c$, and \textbf{I} has a winning strategy $\sigma_s$ in $K[a(s),A_s]$ to reach $\X_{n+1}$. Define $\sigma(s) := \sigma_s(\emptyset)$. This is indeed a legal move, as:
    \begin{align*}
        \sigma(s) \in [a(s),A_s] \subseteq [a(s),A_s'] \subseteq [a(s),A_{s\restrictedto n}] \subseteq [a(s),B_s].
    \end{align*}
    This completes the induction. We see that $\sigma$ is indeed a strategy for \textbf{I} in $K[a,A_\emptyset]$ to reach $\X$ - if $s$ is a total state of $K[a,A_\emptyset]$ following $\sigma$, then $A(s) = A(t_s^n) \in \X_n$ for all $n$, so $A(s) \in \bigcap_{n<\omega} \X_n = \X$. This completes the proof.
\end{proof}

\begin{proposition}
\label{prop:kr.closed.under.countable.unions}
    For any \textbf{wA2}-space $(\cal{R},\leq,r)$, $\bcal{KR}$ is closed under countable unions.
\end{proposition}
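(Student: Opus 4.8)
The plan is to reduce the statement to Lemma \ref{lem:kr.general.dichotomy} by complementation. Let $\{\X_n\}_{n<\omega}$ be a countable family of Kastanas Ramsey sets, put $\Y := \bigcup_{n<\omega}\X_n$, and note that $\Y^c = \bigcap_{n<\omega}\X_n^c$. Fix $A \in \cal{R}$ and $a \in \cal{AR}\restrictedto A$, and apply Lemma \ref{lem:kr.general.dichotomy} to the family $\{\X_n^c\}_{n<\omega}$; this produces some $B \in [a,A]$ such that either (1) \textbf{I} has a strategy in $K[a,B]$ to reach $\bigcap_{n<\omega}\X_n^c = \Y^c$, or (2) \textbf{II} has a strategy $\tau$ in $K[a,B]$ such that for every total state $s$ following $\tau$ there is an $n$ for which \textbf{I} has no strategy in $K[a(s\restrictedto n),\last(s\restrictedto n)]$ to reach $\X_n^c$. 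In case (1), $B$ already witnesses that $\Y$ is Kastanas Ramsey below $[a,A]$ via clause (1) of Definition \ref{def:kastanas.ramsey}, so it remains to treat case (2).

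The key step would be the following monotonicity observation: if $D \in \cal{R}$, $c \in \cal{AR}\restrictedto D$, and \textbf{I} has no strategy in $K[c,D]$ to reach $\X_n^c$, then \textbf{II} has a strategy in $K[c,E]$ to reach $\X_n$ for some $E \in [c,D]$. Indeed, since $\X_n$ is Kastanas Ramsey there is $E \in [c,D]$ for which \textbf{I} has a strategy in $K[c,E]$ to reach $\X_n^c$ or \textbf{II} has a strategy in $K[c,E]$ to reach $\X_n$; the first alternative is impossible, because $[c,E] \subseteq [c,D]$ (as $E \leq D$), so any such strategy for \textbf{I} — whose opening move already lies in $[c,D]$, and all of whose later positions depend only on the moves played and not on the ambient set — would, move for move, also be a strategy for \textbf{I} in $K[c,D]$ to reach $\X_n^c$. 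Using this, I would define a strategy for \textbf{II} in $K[a,B]$ to reach $\Y$. If \textbf{I} has no strategy in $K[a,B]$ to reach $\X_0^c$, then the observation applied with $c=a$, $D=B$ and $n=0$ immediately gives $E \in [a,B]$ with \textbf{II} having a strategy in $K[a,E]$ to reach $\X_0 \subseteq \Y$, so $E$ witnesses that $\Y$ is Kastanas Ramsey below $[a,A]$ via clause (2); hence we may assume \textbf{I} does have such a strategy. Now \textbf{II} plays as follows: follow $\tau$, and each time $\tau$ prescribes \textbf{II}'s next move $(a',B')$ — which completes a state of some rank $n \geq 1$ — test whether \textbf{I} has no strategy in $K[a',B']$ to reach $\X_n^c$. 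The first time this test succeeds, say at a move $(a',B')$ of rank $N$, the monotonicity observation (with $c=a'$, $D=B'$) yields $E' \in [a',B']$ and a strategy $\rho$ for \textbf{II} in $K[a',E']$ to reach $\X_N$; \textbf{II} then plays $(a',E')$ in place of $(a',B')$ — a legal move, since $E' \leq B'$, $E'$ realises $a'$, and $B'$ was itself legal — and follows $\rho$ thereafter. If the test never succeeds, \textbf{II} follows $\tau$ forever.

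I would then verify that this strategy reaches $\Y$. Given a total state $s$ played according to it: if the test never succeeds then \textbf{II} followed $\tau$ throughout, so $s$ is a total state following $\tau$, and by clause (2) some $n$ is a witness; this $n$ is nonzero since we assumed that \textbf{I} has a strategy in $K[a,B]$ to reach $\X_0^c$, so $n \geq 1$, and then the test would have succeeded at the move completing $s\restrictedto n$ — a contradiction. Hence the test succeeds at some rank $N \geq 1$, from which point the remaining play is exactly a run of $K[a',E']$ with \textbf{II} following $\rho$, so its outcome lies in $\X_N \subseteq \Y$. As $A$ and $a$ were arbitrary, $\Y$ is Kastanas Ramsey, i.e. $\bcal{KR}$ is closed under countable unions. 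The point I expect to require the most care is the legality and coherence of the switch at rank $N$ — that \textbf{II} is entitled to replace the board $B'$ handed to it by $\tau$ with the smaller $E'$, and that the ensuing play is genuinely a run of the game $K[a',E']$ — which is precisely why the rank-$0$ case, where \textbf{II} has not yet moved and so cannot shrink the initial board $B$, must be dispatched separately at the outset.
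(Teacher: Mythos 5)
Your proposal is correct and follows essentially the same route as the paper: apply Lemma \ref{lem:kr.general.dichotomy} to the complements $\X_n^c$, take alternative (1) of the dichotomy as the ``\textbf{I} reaches $\Y^c$'' half of Kastanas Ramseyness, and in alternative (2) convert $\tau$ into a strategy for \textbf{II} reaching $\Y$ by following $\tau$ until the first turn $n$ at which \textbf{I} loses the ability to reach $\X_n^c$ and then switching, via the Kastanas Ramseyness of $\X_n$, to a strategy reaching $\X_n \subseteq \Y$. Your explicit monotonicity observation and the separate treatment of the rank-$0$ case are points the paper's write-up glosses over, but they are the same argument.
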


See also Theorem 9, \cite{R08}.

\begin{proof}
    Let $\{\X_n\}_{n<\omega}$ be a countable family of subsets of $\cal{R}$, and let $\X := \bigcup_{n<\omega} \X_n$. Fix any $A \in \cal{R}$ and $a \in \cal{AR}\restrictedto A$. If there exists some $B \in [a,A]$ such that \textbf{II} has a strategy in $K[a,B]$ to reach $\X$, then we're done, so assume otherwise. Consider applying Lemma \ref{lem:kr.general.dichotomy} to $\X^c = \bigcap_{n<\omega} \X_n^c$. We claim that (2) fails for all $(a,B)$ where $B \in [a,A]$, so by the same lemma, \textbf{I} has a strategy in $K[a,A]$ to reach $\X^c$. Indeed, otherwise let $\tau$ be a strategy in $K[a,B]$ witnessing that (2) holds for $(a,B)$. Player \textbf{II} shall follow $\tau$ until they reach some turn $n$, ending with \textbf{II} playing $(a_n,B_n')$, such that \textbf{I} has no strategy in $K[a_n,B_n']$ to reach $\X_n^c$. Since $\X_n$ is Kastanas Ramsey, \textbf{II} may instead play $(a_n,B_n)$ in the last turn, where $B_n \in [a_n,B_n']$, such that \textbf{II} has a strategy in $K[a_n,B_n]$ to reach $\X_n$. Afterwards, \textbf{II} follows this strategy to reach $\X_n$. Since $\X_n \subseteq \X$, this constitutes a strategy for \textbf{II} in $K[a,B]$ to reach $\X$, contradicting our assumption. 
\end{proof}

We now turn our attention to some negative results. 

\begin{definition}
    Let $(\cal{R},\leq,r)$ be a \textbf{wA2}-space, and let $\O \subseteq \cal{AR}$.
    \begin{enumerate}
        \item $\O$ is \emph{$(a,A)$-biasymptotic}, where $A \in \cal{R}$ and $a \in \cal{AR}\restrictedto A$, if for all $B \in [\depth_A(a),A]$, $r_{\lh(a)+1}[a,B] \cap \O \neq \emptyset$ and $r_{\lh(a)+1}[a,B] \cap \O^c \neq \emptyset$.

        \item $\O$ is \emph{biasymptotic} if $\O$ is $(a,A)$-biasymptotic for all $A \in \cal{R}$ and $a \in \cal{AR}\restrictedto A$.
    \end{enumerate}
\end{definition}

Thus, the assertion that $\cal{R}$ does not satisfy \textbf{A4} is equivalent to the assertion that there exists an $(a,A)$-biasymptotic set for some $A \in \cal{R}$ and $a \in \cal{AR}$. We illustrate some examples here. 

\begin{example}[Infinite block sequences $\FIN_{\pm k}^{[\infty]}$]
    Recall that for each $x \in \FIN_{\pm k}$, we defined:
    \begin{align*}
        n_x := \min\{n < \omega : x(n) = \pm k\},
    \end{align*}
    and:
    \begin{align*}
        Y := \{x \in \FIN_{\pm k} : x(n_x) = k\}.
    \end{align*}
    Then, for all $A \in \FIN_{\pm k}^{[\infty]}$, $\c{A} \cap Y \neq \emptyset$ and $\c{A} \cap Y^c \neq \emptyset$. Thus, the set:
    \begin{align*}
        \O := \{a \in \FIN_k^{[<\infty]} : a = (x_0,\dots,x_n) \wedge x_n \in Y\}.
    \end{align*}
    is a biasymptotic set.
\end{example}

\begin{example}[Countable vector space $E^{[\infty]}$]
\label{ex:biasymptotic.set.vector.spaces}
    Let $\mathbb{F}$ be a field such that $|F| > 2$, and let $E$ be an $\mathbb{F}$-vector space of dimension $\aleph_0$. We defined the set:
    \begin{align*}
        Y := \{x \in E : \text{$x = e_n + y$ for some $n$ and $e_n < y$}\}.
    \end{align*}
    We have that $\c{A} \cap Y \neq \emptyset$ and $\c{A} \cap Y^c \neq \emptyset$ for all $A \in E^{[<\infty]}$. Thus, the set:
    \begin{align*}
        \{a \in E^{[<\infty]} : a = (x_0,\dots,x_n) \wedge x_n \in Y\}
    \end{align*}
    is biasymptotic.
\end{example}

\begin{proposition}
    Let $(\cal{R},\leq,r)$ be a \textbf{wA2}-space. If \textbf{A4} fails, then there exists some $\X \in \bcal{KR} \cap \bar{\bcal{KR}}$ which is not Ramsey.
\end{proposition}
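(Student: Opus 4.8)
The plan is to manufacture from a failure of \textbf{A4} a set $\X$ that is (relatively) clopen, is Kastanas Ramsey with Kastanas Ramsey complement, but is not Ramsey. Since \textbf{A4} fails there is a biasymptotic set, so among all triples $(a,A,\O)$ with $\O \subseteq \cal{AR}_{\lh(a)+1}$ an $(a,A)$-biasymptotic set I fix one, call it $(a^*,A^*,\O^*)$, with $n^* := \lh(a^*)$ \emph{minimal}; by this minimality, for every $a$ with $\lh(a) < n^*$ and every $A \in \cal{R}$ the pigeonhole conclusion of \textbf{A4} holds at $(a,A)$, since any partition failing it would itself be $(a,A)$-biasymptotic at a strictly lower level. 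Replacing $A^*$ by an arbitrary element of $[a^*,A^*]$ --- nonempty by \textbf{A3}(1), and still a biasymptoticity witness for $\O^*$ by a short argument using \textbf{A3}(2) --- I may also assume $r_{n^*}(A^*) = a^*$. I then set
\[
    \X := \{C \in [a^*,A^*] : r_{n^*+1}(C) \in \O^*\}.
\]
That $\X$ is \emph{not Ramsey} is immediate: for any $B \in [a^*,A^*]$, \textbf{A3}(2) produces $B' \in [\depth_{A^*}(a^*),A^*]$ with $\emptyset \ne [a^*,B'] \subseteq [a^*,B]$, and biasymptoticity makes $r_{n^*+1}[a^*,B']$ meet both $\O^*$ and $\O^{*c}$, hence $[a^*,B]$ meets both $\X$ and $\X^c$; so $(a^*,A^*)$ witnesses the failure of the Ramsey property.

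For the Kastanas Ramsey claims I would first record that every outcome of $K[a,B]$ lies in $[a,B]$ (routine from \textbf{wA2}(2)), so that it suffices to show: for all $A \in \cal{R}$ and $a \in \cal{AR}\restrictedto A$, at least one of
\begin{enumerate}
    \item[(i)] some $B \in [a,A]$ has $[a,B] \subseteq \X^c$;
    \item[(ii)] some $B \in [a,A]$ has $[a,B] \subseteq \X$;
    \item[(iii)] there are $B,B' \in [a,A]$ with \textbf{II} having a strategy in $K[a,B]$ (resp.\ $K[a,B']$) to reach $\X$ (resp.\ $\X^c$)
\end{enumerate}
holds; for in each case both $\X$ and $\X^c$ meet the defining condition of Kastanas Ramsey at $(a,A)$ --- in case (i) both players trivially reach $\X^c$, in (ii) both trivially reach $\X$, and (iii) gives what is needed directly. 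If $[a,A] \cap [a^*,A^*] = \emptyset$, then $[a,A] \subseteq \X^c$ and (i) holds with $B = A$. Otherwise fix $C_0 \in [a,A] \cap [a^*,A^*]$; then $a$ and $a^*$ are comparable as initial segments, and I split on $\lh(a)$ versus $n^*$. If $\lh(a) > n^*$, then $a$ end-extends $a^*$; if $a\restrictedto(n^*+1) \notin \O^*$ then $[a,A] \subseteq \X^c$, and otherwise $[a,C_0] \subseteq \X$, giving (i) or (ii). If $\lh(a) = n^*$, so $a = a^*$, take $B := C_0 \le A^*$: in $K[a^*,B]$ \textbf{I}'s opening move $A_0$ already lies in $[a^*,A^*]$, so by \textbf{A3}(2) and biasymptoticity $r_{n^*+1}[a^*,A_0]$ meets both $\O^*$ and $\O^{*c}$, whence \textbf{II} can send the first coordinate past $a^*$ into $\O^*$ (resp.\ $\O^{*c}$) and then play arbitrarily, so the outcome lands in $\X$ (resp.\ $\X^c$); this is (iii).

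The remaining case $\lh(a) < n^*$ is the crux and uses the minimality of $n^*$. Applying the \textbf{A4} pigeonhole successively at levels $\lh(a)+1,\dots,n^*$ against the partitions $\{a^*\restrictedto(k+1)\}$ versus its complement --- reverting to a neighbourhood of the form $[a,\cdot]$ after each step via \textbf{A3}(1) --- I obtain $B \in [a,A]$ for which either $[a^*,B] = \emptyset$, so that $[a,B] \subseteq \X^c$ and (i) holds, or else every $C \in [a,B]$ has $r_{n^*}(C) = a^*$, i.e.\ $[a,B] = [a^*,B]$. In the second alternative, if $[a^*,B] \cap [a^*,A^*] = \emptyset$ then again $[a,B] \subseteq \X^c$; otherwise fix $B'' \in [a^*,B] \cap [a^*,A^*]$ and consider $K[a,B'']$. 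Because every branch of $B''$ through $a$ runs through $a^*$, on each of the first $n^* - \lh(a)$ rounds \textbf{II}'s only legal move is $a_j = a^*\restrictedto(\lh(a)+j)$, so the play is forced ``up to $a^*$''; from that point \textbf{I}'s move lies in $[a^*,A^*]$ and the argument of the $\lh(a) = n^*$ case applies verbatim, so \textbf{II} can steer the next coordinate into $\O^*$ or $\O^{*c}$ and reach $\X$ or $\X^c$ as desired, giving (iii). The main obstacle is exactly this last case: arranging the iterated pigeonholing so that all the singleton constraints accumulate inside the correct Ellentuck neighbourhoods, and checking that \textbf{II} really is forced up the singleton branch --- together with the routine but repeated appeals to \textbf{A3} that keep every refinement of the form $[a,\cdot]$.
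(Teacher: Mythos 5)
Your proof is correct (the deferred appeals to \textbf{A3} and \textbf{wA2}(3) that you flag do all go through), but it takes a genuinely different route from the paper. The paper's proof is very short: it takes \emph{any} $(a,A)$-biasymptotic $\O$, sets $\X := \{C : a \sqsubseteq C \wedge r_{\lh(a)+1}(C) \in \O\}$ (a clopen set, with no side condition $C \leq A$), observes non-Ramseyness exactly as you do, and then gets $\X \in \bcal{KR} \cap \bar{\bcal{KR}}$ in one stroke from Borel determinacy of the Kastanas game: for every $[a',A']$ the game with Borel payoff $\X$ is determined, so either \textbf{II} reaches $\X$ or \textbf{I} reaches $\X^c$, and one takes $B = A'$ in the definition of Kastanas Ramsey. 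You avoid determinacy entirely: by choosing the biasymptotic witness at a $\sqsubseteq$-minimal level $n^*$ you can run an iterated singleton pigeonhole below level $n^*$ (legitimate, since minimality gives the \textbf{A4} conclusion there) to force every play either away from $a^*$ (so the relevant neighbourhood sits inside $\X^c$) or up the branch to $a^*$, after which biasymptoticity hands \textbf{II} explicit strategies into either side. What each approach buys: the paper's argument is two lines but invokes Borel determinacy for games whose moves range over the possibly uncountable set $\cal{R}$ — a heavy (though ZFC-provable) hammer; yours is longer and requires careful bookkeeping of which Ellentuck neighbourhood each refinement lives in, but it is effective, produces the witnessing strategies explicitly, and works in a much weaker base theory. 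The only cosmetic difference is that your $\X$ carries the extra clause $C \in [a^*,A^*]$, making it closed rather than clopen; this is harmless, and in fact the non-Ramseyness argument would work equally well without it.
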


\begin{proof}
    Let $\O$ be an $(a,A)$-biasymptotic set for some $A \in \cal{R}$ and $a \in \cal{AR}\restrictedto A$. Define:
    \begin{align*}
        \X := \{C \in \cal{R} : a \sqsubseteq C \wedge r_{\lh(a)+1}(C) \in \O\}.
    \end{align*}
    Since $\O$ is $(a,A)$-biasymptotic, for any $B \in [a,A]$, there exists some $C \in [a,B]$ such that $r_{\lh(a)+1}(C) \in \O$, and some $C' \in [a,B]$ such that $r_{\lh(a)+1}(C) \notin \O$. Consequently, $[a,B] \cap \X \neq \emptyset$ and $[a,B] \cap \X^c \neq \emptyset$, so $\X$ is not Ramsey as $B$ is arbitrary. However, $\X$ is a countable union of clopen sets, so it is Borel (under the metrisable topology). By the Borel determinacy for $\R^\omega$, the game $K[a,A]$ to reach $\X$ or $\X^c$ is always determined, so $\X \in \bcal{KR} \cap \bar{\bcal{KR}}$.
\end{proof}

\begin{proposition}
\label{prop:kr.not.symmetric}
    Let $(\cal{R},\leq,r)$ be a \textbf{wA2}-space, and assume that it has a biasymptotic set. If $\bcal{KR} \neq \Po(\cal{R})$, then $\bcal{KR}$ is not closed under complements.
\end{proposition}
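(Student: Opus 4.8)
Suppose $\bcal{KR} = \bar{\bcal{KR}}$; I want to derive $\bcal{KR} = \Po(\cal{R})$. The key point is that $\bcal{KR}$ is closed under countable unions (Proposition~\ref{prop:kr.closed.under.countable.unions}); if it were also closed under complements, it would be a $\sigma$-algebra, so it would suffice to show every open (equivalently, every basic clopen) set lies in $\bcal{KR}$, and then that $\bcal{KR}$ contains all Borel sets, and finally to leverage the biasymptotic set to ``boost'' past Borel. Actually, the cleaner route: show that the existence of a biasymptotic set forces a non-Ramsey Borel set into $\bcal{KR}$ in a way that, combined with closure under complements, collapses the hierarchy.

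\textbf{Step 1: $\bcal{KR}$ is a $\sigma$-algebra.} By Proposition~\ref{prop:kr.closed.under.countable.unions}, $\bcal{KR}$ is closed under countable unions, and by the hypothesis $\bcal{KR} = \bar{\bcal{KR}}$ it is closed under complements; hence it is closed under countable intersections too, and $\emptyset, \cal{R} \in \bcal{KR}$ trivially. So $\bcal{KR}$ is a $\sigma$-algebra on $\cal{R}$.

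\textbf{Step 2: every clopen set is in $\bcal{KR}$.} For a basic clopen set $\X = \{C : b \sqsubseteq C\}$ (for $b \in \cal{AR}$), and any $A \in \cal{R}$, $a \in \cal{AR}\restrictedto A$: if $b$ and $a$ are $\sqsubseteq$-incompatible then $[a,A] \subseteq \X^c$ and Player~\textbf{I} trivially has a strategy in $K[a,A]$ to reach $\X^c$ (any legal play stays in $[a,A] \subseteq \X^c$). If $b \sqsubseteq a$ then $[a,A] \subseteq \X$ and Player~\textbf{II} trivially reaches $\X$. If $a \sqsubset b$, then after finitely many rounds every total play $C$ of $K[a,A]$ satisfies $b \sqsubseteq C$ or not is determined by the finitely many initial moves; more precisely, once the realised approximations pass length $\lh(b)$, membership in $\X$ is fixed, and whichever side it falls on, the corresponding player has a (trivial, non-adaptive) strategy to force it — here one uses that Player~\textbf{I}'s moves shrink into $[a(s), \cdot]$ and cannot avoid realising the forced extension. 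Thus every basic clopen set is Kastanas Ramsey; by Step~1 and the fact that $\cal{R}$ (with $\cal{AR}$ possibly uncountable) still has the property that open sets are countable unions of basic clopen sets when $\cal{AR}$ is countable — wait, here I should be careful: the proposition does not assume $\cal{AR}$ countable, so I instead argue directly that $\bcal{KR}$ contains all clopen sets and is a $\sigma$-algebra, hence contains the $\sigma$-algebra generated by the clopen sets. In any case, $\bcal{KR}$ contains all sets in the $\sigma$-algebra generated by clopen sets, which includes the non-Ramsey Borel set $\X$ built from a biasymptotic $\O$ in the previous proposition.

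\textbf{Step 3: deriving the contradiction.} This is where I expect the real work to be, and honestly I am not certain the above route reaches $\Po(\cal{R})$ — being a $\sigma$-algebra containing the clopen sets only gives the Borel $\sigma$-algebra, not all of $\Po(\cal{R})$. So the actual argument must be different: I suspect one should fix \emph{any} $\Y \subseteq \cal{R}$ and, using a biasymptotic set $\O$ (say $(a_0, A_0)$-biasymptotic), \emph{encode} $\Y$ into the outcome space of a Kastanas game below some $[a,A]$ so that Player~\textbf{II}'s extra freedom (choosing $a_{n+1} \in r_{\lh(a_n)+1}[a_n, A_n]$, of which biasymptoticity guarantees there are always options on both sides of $\O$) lets \textbf{II} steer the outcome's ``$\O$-trace'' arbitrarily, making the game below $[a,A]$ to reach $\Y$ reduce to the game to reach a \emph{clopen-in-the-trace} target — which \textbf{II} wins iff $\Y$ meets the relevant piece. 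Concretely: biasymptoticity gives, at every node, a binary branching into $\O$ vs $\O^c$ that \textbf{II} controls; so \textbf{II} can realise any prescribed infinite binary sequence as the sequence of $\O$-memberships of the $a_n$'s, and simultaneously \textbf{I}'s shrinking moves constrain nothing about which branch is taken. One then shows: for suitable $\Y$ depending on an arbitrary $Z \subseteq 2^\omega$, \textbf{II} has a strategy to reach $\Y$ below $[a,A]$, while \textbf{I} has a strategy to reach $\Y^c$ below $[a,A]$ is \emph{also} impossible unless $Z$ is trivial — forcing $\X_Z := \Y \notin \bcal{KR}$ whenever $\bcal{KR} \ne \Po(\cal{R})$, and then complementation closure is violated by pairing $\X_Z$ with $\cal{R} \setminus \X_Z$ for $Z$ chosen so exactly one of them is Kastanas Ramsey. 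The crux — and the main obstacle — is making the ``\textbf{II} controls the $\O$-trace freely, \textbf{I} controls nothing relevant'' claim precise against adaptive strategies for \textbf{I}: one must check that whatever $A_n \in [a_n, B_{n-1}]$ \textbf{I} plays, biasymptoticity of $\O$ relative to $A_n$ (via axiom \textbf{A3}, passing to depth) still offers \textbf{II} both an $\O$-extension and an $\O^c$-extension $a_{n+1} \in r_{\lh(a_n)+1}[a_n, A_n]$, and that \textbf{II} can then also pick $B_n \in [a_{n+1}, A_n]$ keeping the biasymptotic property alive at the next level. This ``persistence of biasymptoticity under legal shrinking'' is exactly what the definition of $(a,A)$-biasymptotic (quantifying over $B \in [\depth_A(a), A]$) together with \textbf{A3} is designed to provide, so I would extract it as a short lemma first and then run the encoding.
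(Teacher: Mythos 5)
Your proposal does not reach a proof. You attempt the contrapositive (closure under complements would force $\bcal{KR} = \Po(\cal{R})$), but as you yourself observe in Step 3, the route through ``$\sigma$-algebra containing the clopen sets'' can only yield the Borel sets, and the replacement you sketch --- encoding an arbitrary $Z \subseteq 2^\omega$ into the $\O$-trace of outcomes and arguing that \textbf{I} ``controls nothing relevant'' --- is never carried out and would be the entire proof if it could be. (Even Step 2 is shakier than you suggest: in the case $a \sqsubset b$ the realised initial segment of length $\lh(b)$ is chosen by \textbf{II}, not \textbf{I}, so \textbf{I} cannot ``force'' avoidance of $b$ without first shrinking to some $B$ with $b \notin \cal{AR}\restrictedto[a,B]$, which requires an argument.) So the proposal has a genuine gap: it never exhibits a set in $\bcal{KR}$ whose complement is not in $\bcal{KR}$.

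The paper's proof is a short direct construction that uses exactly the mechanism you correctly identified --- biasymptoticity lets \textbf{II} freely steer the $\O$-trace of the outcome --- but applies it to far less ambitious targets. Fix a biasymptotic $\O$ and any $\X \notin \bcal{KR}$ (which exists by hypothesis), and split $\X$ according to the tail behaviour of the trace: $\X_0 := \{C \in \X : \forall m \, \exists n \geq m \, [r_n(C) \in \O]\}$ and $\X_1 := \{C \in \X : \exists m \, \forall n \geq m \, [r_n(C) \notin \O]\}$. Then $\X_0^c$ and $\X_1^c$ are both Kastanas Ramsey, because \textbf{II} can win $K[a,A]$ into $\X_0^c$ by always playing $a_n \notin \O$, and into $\X_1^c$ by always playing $a_n \in \O$ --- biasymptoticity guarantees both options are always available regardless of \textbf{I}'s shrinking. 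Since $\X = \X_0 \cup \X_1$ and $\bcal{KR}$ is closed under countable unions (Proposition \ref{prop:kr.closed.under.countable.unions}), at least one of $\X_0, \X_1$ is not Kastanas Ramsey even though its complement is; that single pair already witnesses failure of closure under complements. The lesson is that you do not need \textbf{II} to control the trace \emph{arbitrarily} against an adaptive \textbf{I}; you only need the two constant-trace strategies, and the decomposition of an arbitrary non-Kastanas-Ramsey set into two pieces whose complements those strategies capture.
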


\begin{proof}
    Fix a biasymptotic set $\O$, and let $\X \subseteq \cal{R}$ be not Kastanas Ramsey. Define two sets as follows:
    \begin{align*}
        \X_0 &:= \{C \in \X : \forall m \, \exists n \geq m[r_n(C) \in \O]\}, \\
        \X_1 &:= \{C \in \X : \exists m \, \forall n \geq m[r_n(C) \notin \O]\}.
    \end{align*}
    Observe that both $\X_0^c$ and $\X_1^c$ are Kastanas Ramsey: Indeed, for any $A \in \cal{R}$ and $a \in \cal{AR}\restrictedto A$, \textbf{II} has a winning strategy in $K[a,A]$ to reach $\X_0^c$ by playing $a_n \notin \O$ for all $n$, and \textbf{II} also has a winning strategy in $K[a,A]$ to reach $\X_1^c$ by playing $a_n \in \O$ for all $n$. On the other hand, we have that $\X_0 \cup \X_1 = \X$, so if both $\X_0$ and $\X_1$ are Kastanas Ramsey, then so is $\X$ by Proposition \ref{prop:kr.closed.under.countable.unions}, a contradiction. Thus, at least one of $\X_0$ or $\X_1$ witnesses that $\bcal{KR}$ is not closed under complements.
\end{proof}

\begin{proposition}
\label{prop:kr.not.closed.under.intersections}
    Let $(\cal{R},\leq,r)$ be a \textbf{wA2}-space, and assume that it has a biasymptotic set. If $\bcal{KR} \neq \Po(\cal{R})$, then $\bcal{KR}$ is not closed under finite intersections.
\end{proposition}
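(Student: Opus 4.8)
The plan is to mimic the structure of the proof of Proposition \ref{prop:kr.not.symmetric}, but combine the biasymptotic set with a non-Kastanas-Ramsey set in a way that exhibits a failure of closure under finite (binary) intersection. Suppose $\bcal{KR} \neq \Po(\cal{R})$ and fix $\X \subseteq \cal{R}$ not Kastanas Ramsey, together with a biasymptotic set $\O$. First I would introduce the two ``tail'' sets governed by $\O$:
\begin{align*}
    \Y_0 &:= \{C \in \cal{R} : \forall m \, \exists n \geq m \, [r_n(C) \in \O]\}, \\
    \Y_1 &:= \{C \in \cal{R} : \exists m \, \forall n \geq m \, [r_n(C) \notin \O]\},
\end{align*}
so that $\Y_0$ and $\Y_1$ partition $\cal{R}$. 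As in the proof of Proposition \ref{prop:kr.not.symmetric}, \textbf{II} has a winning strategy in $K[a,A]$ to reach $\Y_0$ (play $a_n \in \O$ cofinally, which is possible since $\O$ is biasymptotic) and also to reach $\Y_1$ (play $a_n \notin \O$ for all $n$). In particular both $\Y_0$ and $\Y_1$, as well as their complements, are Kastanas Ramsey: \textbf{II} wins into $\Y_i$ directly, and wins into $\Y_i^c = \Y_{1-i}$ by the same token.

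Next I would consider the sets $\X \cap \Y_0$ and $\X \cap \Y_1$ (equivalently, one could write them as $\X \cup \Y_1^c$ complemented, but the cleaner route is via unions). The key observations are: (i) $(\X \cap \Y_0) \cup (\X \cap \Y_1) = \X \cap (\Y_0 \cup \Y_1) = \X$, which is \emph{not} Kastanas Ramsey; hence by Proposition \ref{prop:kr.closed.under.countable.unions} at least one of $\X \cap \Y_0$, $\X \cap \Y_1$ is not Kastanas Ramsey — say $\X \cap \Y_i \notin \bcal{KR}$. (ii) However, $\X \cap \Y_i$ \emph{is} the intersection of two sets whose complements... — here I need to be careful, because closure under intersection is phrased in terms of $\bcal{KR}$ itself, not $\bar{\bcal{KR}}$. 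So instead I would write $\X \cap \Y_i$ as an intersection of two \emph{Kastanas Ramsey} sets. The natural candidates are $\X \cup \Y_i^c$ and $\Y_i$: indeed $(\X \cup \Y_i^c) \cap \Y_i = (\X \cap \Y_i) \cup (\Y_i^c \cap \Y_i) = \X \cap \Y_i$. Now $\Y_i \in \bcal{KR}$ was established above, so it remains to show $\X \cup \Y_i^c \in \bcal{KR}$; by Proposition \ref{prop:kr.closed.under.countable.unions} it suffices that $\Y_i^c = \Y_{1-i}$ is Kastanas Ramsey (already shown) and that $\X$ — no, $\X$ is not Kastanas Ramsey. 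So this decomposition fails too, and I must decompose $\X \cap \Y_i$ differently.

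The correct move is to observe that for the chosen $i$, one of $\Y_0, \Y_1$ already does the work: since \textbf{II} wins into $\Y_{1-i}$ from every $[a,A]$, and $\Y_{1-i} \subseteq (\X \cap \Y_i)^c$, we get that \textbf{II} has a strategy in $K[a,A]$ reaching $(\X \cap \Y_i)^c$; hence $(\X \cap \Y_i)^c \in \bcal{KR}$, i.e. $\X \cap \Y_i \in \bar{\bcal{KR}}$. But $\X \cap \Y_i$ itself is not in $\bcal{KR}$. So I would instead run the argument for a slightly different pair. Take $\Z_0 := \X \cup \Y_1$ and $\Z_1 := \X \cup \Y_0$; then $\Z_0 \cap \Z_1 = \X \cup (\Y_0 \cap \Y_1) = \X$, which is not Kastanas Ramsey. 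It thus remains to check $\Z_0, \Z_1 \in \bcal{KR}$. For $\Z_0 = \X \cup \Y_1$: given $[a,A]$, \textbf{II} plays into $\Y_1$ (possible everywhere), so \textbf{II} has a strategy reaching $\Y_1 \subseteq \Z_0$; hence $\Z_0 \in \bcal{KR}$. Symmetrically $\Z_1 = \X \cup \Y_0 \in \bcal{KR}$ since \textbf{II} can always play into $\Y_0$. Therefore $\Z_0$ and $\Z_1$ are Kastanas Ramsey but $\Z_0 \cap \Z_1 = \X$ is not, which is exactly the desired failure of closure under finite intersections.

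The main obstacle, and the place where care is genuinely needed, is precisely the bookkeeping in the previous paragraph: one must choose the auxiliary sets so that each is Kastanas Ramsey \emph{for the ``positive'' player reason} (i.e. \textbf{II} can force the outcome into the set from any position, using biasymptoticity of $\O$), while their intersection collapses to the fixed non-Kastanas-Ramsey $\X$. The union $\X \cup \Y_i$ achieves this because $\Y_1$ and $\Y_0$ are each ``\textbf{II}-reachable everywhere'' and $\Y_0 \cap \Y_1 = \emptyset$. Everything else — that \textbf{II} can play $a_n \in \O$ cofinally or $a_n \notin \O$ eventually, using that $\O$ is biasymptotic and hence for every $B$ and every initial segment there is a legal extension in $\O$ and one in $\O^c$ — is routine and already implicit in the proof of Proposition \ref{prop:kr.not.symmetric}.
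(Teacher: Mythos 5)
Your final argument is correct and is essentially the paper's proof in complemented form: the paper takes a Kastanas Ramsey set $\X$ with $\X^c \notin \bcal{KR}$ and writes $\X^c = (\X \cap \Y_0)^c \cap (\X \cap \Y_1)^c$, which is exactly your $\X = \Z_0 \cap \Z_1$ with $\Z_i = \X \cup \Y_{1-i}$ after relabelling $\X^c$ as $\X$. Your version is marginally more direct in that it starts from an arbitrary non-Kastanas-Ramsey set rather than first invoking Proposition \ref{prop:kr.not.symmetric}, but the decomposition and the use of biasymptoticity (so that \textbf{II} can steer into $\Y_0$ or $\Y_1$ from any position) are identical.
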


\begin{proof}
    Fix a biasymptotic set $\O$, and let $\X \subseteq \cal{R}$ be a Kastanas Ramsey set in which $\X^c$ is not Kastanas Ramsey. Define two sets as follows:
    \begin{align*}
        \X_0 &:= \{C \in \X : \forall m \, \exists n \geq m[r_n(C) \in \O]\}, \\
        \X_1 &:= \{C \in \X : \exists m \, \forall n \geq m[r_n(C) \notin \O]\}.
    \end{align*}
    By the same argument as in Proposition \ref{prop:kr.not.symmetric}, $\X_0^c$ and $\X_1^c$ are Kastanas Ramsey. However, $\X^c = \X_0^c \cap \X_1^c$ is not.
\end{proof}

\subsection{Kastanas Ramsey sets in topological Ramsey spaces} We shall now give a proof of Theorem \ref{thm:kr.iff.r}, which is split into a proof of two different propositions. The first proposition is the following:

\begin{proposition}[Proposition 4.2, \cite{CD23}]
\label{prop:kr.equals.r.for.I}
    Let $(\cal{R},\leq,r)$ be an \textbf{A2}-space. For every $\X \subseteq \cal{R}$, $A \in \cal{R}$ and $a \in \cal{AR}\restrictedto A$, \textbf{I} has a strategy in $K[a,A]$ to reach $\X$ iff $[a,B] \subseteq \X$ for some $B \in [a,A]$.
\end{proposition}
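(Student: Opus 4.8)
The plan is to prove both directions directly from the definition of the Kastanas game, exploiting that under \textbf{A2} the Ellentuck neighbourhoods behave rigidly (in particular each $\cal{AR}_n$ is finite, and the depth function is finite-valued on $\cal{AR}\restrictedto A$). For the easy direction ($\Leftarrow$): suppose $[a,B] \subseteq \X$ for some $B \in [a,A]$. I would have \textbf{I} simply play $A_0 := B$ on turn $1$, and thereafter on turn $n+1$ copy \textbf{II}'s last move, i.e. play $A_n := B_{n-1} \in [a_n, B_{n-1}]$. Since every $a_n$ played by \textbf{II} satisfies $a_n = r_{\lh(a_{n-1})+1}(\text{something} \leq A_{n-1} \leq B)$, one checks inductively that the outcome $B' := \lim_n a_n$ lies in $[a,B]$: each $a_n \leq_\fin$-extends along a member of $[a,B]$, and $a \sqsubseteq B'$, so $B' \leq B$ and $a \sqsubseteq B'$ give $B' \in [a,B] \subseteq \X$. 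This is a strategy for \textbf{I} to reach $\X$.

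For the main direction ($\Rightarrow$): assume \textbf{I} has a strategy $\sigma$ in $K[a,A]$ to reach $\X$, and aim to produce $B \in [a,A]$ with $[a,B] \subseteq \X$. The key idea is a fusion-style construction: I would build $B$ as a decreasing sequence of approximations, at each finite stage $n$ consulting $\sigma$ against all possible \textbf{II}-responses that are consistent with the part of $B$ built so far, and using \textbf{A3} (together with finiteness of $\cal{AR}_{\lh(a)+n}$ from \textbf{A2}(1)) to amalgamate the finitely many replies of $\sigma$ into a single neighbourhood refining the current one. More precisely, enumerate the finitely many $b \in r_{\lh(a)+n}[a, B_{n-1}]$; for each, feed \textbf{II}'s moves $(b, \text{last approximation})$ into $\sigma$, record $\sigma$'s reply $A_b \in \cal{R}$, and use \textbf{A3}(2) and the depth function to pull all the $A_b$ back into a common member of $[\depth(b), B_{n-1}]$; let $B_n$ be this common refinement. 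Taking $B := \lim_n B_n$ (which exists and lies in $[a,A]$ since $\cal{R}$ is metrically closed), I claim $[a,B] \subseteq \X$: given any $C \in [a,B]$, the sequence of initial segments $r_{\lh(a)+n}(C)$ describes a play of \textbf{II} against $\sigma$ (with \textbf{II} always playing $B$-approximations as the "$B_n$" moves), whose outcome is exactly $C$; since $\sigma$ reaches $\X$, $C \in \X$.

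The main obstacle is the bookkeeping in the fusion: one must ensure that the single refinement $B_n$ obtained at stage $n$ simultaneously honours $\sigma$'s replies to \emph{all} admissible \textbf{II}-moves at that stage, \emph{and} that the moves \textbf{II} makes in the verification play (the $B_k$'s) are legal, i.e. lie in the neighbourhoods $\sigma$ dictates. This is where \textbf{A2} is essential — finiteness of each $r_{\lh(a)+n}[a,B_{n-1}]$ (from \textbf{A2}(1)) makes the amalgamation a finite operation, and \textbf{A3}(2) provides the required inclusion $\emptyset \neq [b, A'] \subseteq [b, A_b]$ with $A'$ of controlled depth, which is what lets the construction close up. I would also need \textbf{A2}(3) to guarantee that the $\leq_\fin$-extensions chosen at successive stages remain coherent so that the limit $B$ genuinely satisfies $r_n(B) \leq_\fin r_m(A)$ as required for $B \leq A$. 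Once the fusion is set up carefully the verification is routine; I expect the write-up to spend most of its length on making the stage-$n$ amalgamation precise.
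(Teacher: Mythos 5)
Your overall plan is the standard one, and it is essentially the argument of Proposition 4.2 of \cite{CD23}, which this paper cites rather than reproves (the paper only remarks that the proof there needs a ``careful enumeration'' fix). Your backward direction is correct as written: \textbf{I} plays $B$ and then copies \textbf{II}'s last $B_{n-1}$, and the outcome lies in $[a,B]$ by \textbf{wA2}(2)--(3).

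The forward direction, however, rests on a false finiteness claim, and this is exactly the point where the real work lies. You assert that under \textbf{A2} each $\cal{AR}_n$ is finite and hence that $r_{\lh(a)+n}[a,B_{n-1}]$ is a finite set to be enumerated at stage $n$. Axiom \textbf{A2}(1) only says that $\{b \in \cal{AR} : b \leq_\fin c\}$ is finite for each \emph{fixed} $c$; the set $r_{\lh(a)+n}[a,B_{n-1}]$ is the union of these over all levels $m$ of $B_{n-1}$ and is infinite in every nontrivial example (in the Ellentuck space it is $\{a \cup \{x\} : x \in B_{n-1},\ x > \max a\}$). So the stage-$n$ amalgamation as you describe it is an infinite operation and the fusion does not close up. The repair --- which is what the paper's remark about a careful enumeration alludes to, and what Lemma \ref{lem:R.is.semiselective} formalises --- is to stratify by depth: at stage $n$ handle only the finitely many $b$ with $b \leq_\fin r_{\lh(a)+n}$ of the current approximation (finite by \textbf{A2}(1)), deferring each other $b$ to the stage equal to its depth, and there apply \textbf{A3}(2) finitely often. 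Two further pieces of bookkeeping are then forced on you: you must record, for each handled $b$, the exact set in $[b,\cdot]$ that was fed to $\sigma$ as \textbf{II}'s move, since in the verification \textbf{II} must replay those same sets (not the limit $B$, which need not even lie in $[b,\cdot]$) for the play to follow $\sigma$; and within a fixed depth you must treat the $b$'s in order of length so that $\sigma$'s reply to the history ending at the immediate predecessor of $b$ is available when $b$ is treated --- this last point is precisely the gap in \cite{CD23} that the paper flags. With these corrections your argument goes through.
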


We remark that the proof in \cite{CD23} assumes that $(\cal{R},\leq,r)$ satisfies the following property: If $a \in \cal{AR}\restrictedto A$, and $b \sqsubseteq a$ but $b \neq a$, then $\depth_A(b) < \depth_A(a)$. While it is not true that all spaces satisfying \textbf{A1}-\textbf{A4} would also satisfy such a property, the gap may be fixed with a careful enumeration of elements of $\cal{AR}$. We omit the details.

The second proposition is the following:

\begin{proposition}
\label{prop:kr.II.strategy.gives.I}
    Suppose that $(\cal{R},\leq,r)$ satisfies \textbf{A1}-\textbf{A4}. For every $\X \subseteq \cal{R}$, $A \in \cal{R}$ and $a \in \cal{AR}\restrictedto A$, if \textbf{II} has a strategy in $K[a,A]$ to reach $\X$, then \textbf{I} has a strategy in $K[a,A]$ to reach $\X$.
\end{proposition}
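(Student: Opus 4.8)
The plan is to prove the contrapositive in spirit: given a strategy $\tau$ for \textbf{II} in $K[a,A]$ to reach $\X$, we extract from it a single element $B \in [a,A]$ such that $[a,B] \subseteq \X$, and then invoke Proposition \ref{prop:kr.equals.r.for.I} (which applies since \textbf{A1}--\textbf{A4} implies in particular \textbf{A2}) to conclude that \textbf{I} has a strategy in $K[a,B] \subseteq K[a,A]$ to reach $\X$. So the real content is a ``fusion'' argument: using \textbf{A3} and \textbf{A4}, thin out $A$ along a sequence of Ellentuck neighbourhoods to build $B \in [a,A]$ with the property that \emph{every} $C \in [a,B]$ arises as the outcome of some play of $K[a,A]$ in which \textbf{II} follows $\tau$.

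The key steps, in order, are as follows. First, fix the strategy $\tau$ and work inside $[a,A]$. The idea is to build a decreasing (mod finite, in the sense of $\leq$) sequence of conditions and a tree of partial plays following $\tau$, indexed by the finite approximations $b \in \cal{AR}\restrictedto[a,A]$ that we intend to make extendible. At stage $n$, having committed to $r_n(B)$ (extending $a$) and to finitely many partial $\tau$-plays that realise the finitely many $b \in r_n[a, \cdot]$ currently in play, we use \textbf{A3}(2) to push each relevant condition down into a neighbourhood of controlled depth, and then use \textbf{A4} to decide, for each such $b$ and each possible one-step extension, whether it can be realised by continuing the appropriate $\tau$-play — taking the homogeneous side that keeps all extensions realisable. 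The finiteness in \textbf{A2}(1) (not merely the countability of \textbf{wA2}) is what makes this a genuine finite bookkeeping at each stage and lets the fusion close: only finitely many $b$'s of each length need to be handled, so a diagonal intersection yields $B \in [a,A]$, and by construction every $C \in [a,B]$ is the limit of a chain of realisable approximations, hence is itself the outcome of a $\tau$-play, hence lies in $\X$.

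Concretely, I expect to maintain at each stage $n$: a finite set $\{b : b \in r_{\lh(a)+n}[a,A], b \leq_\fin r_m(B) \text{ for the current } B\}$, for each such $b$ a partial state $s_b$ following $\tau$ with $a(s_b) = b$, and a condition $A_b \in [\depth_B(b), \cdot]$ refining $\last(s_b)$, arranged coherently so that $b \sqsubseteq b'$ implies $s_b$ is an initial segment of $s_{b'}$ and $A_{b'} \leq A_b$. The successor step is: given $b$ and $A_b$, consider \textbf{I}'s move $A_b$ in the position $s_b$; \textbf{II} (following $\tau$) replies with some $b^* \in r_{\lh(b)+1}[b, A_b]$ and some $C \in [b^*, A_b]$. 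To make \emph{all} one-step extensions of $b$ realisable (not just the one $\tau$ happens to pick), apply \textbf{A4} to the set $\O$ of extensions that \emph{are} realisable by some legal \textbf{I}-move from $s_b$; homogeneity on the good side is exactly what the depth-control in \textbf{A3} guarantees we can arrange while staying below $B$. Then set $A$'s refinement accordingly and pass $s_{b^\frown x}$ and $A_{b^\frown x}$ down for each admissible $x$.

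The main obstacle — and where the argument must be done carefully rather than sketched — is the simultaneous coherence of the tree of $\tau$-plays with the fusion sequence: when we refine $A$ at stage $n$ to gain \textbf{A4}-homogeneity for one approximation $b$, we must not destroy the realisability of the sibling approximations or of the already-committed $r_n(B)$, and we must be sure that \textbf{II}'s $\tau$-reply to \textbf{I}'s new (smaller) move still lands inside the neighbourhoods we have reserved. This is handled by always refining within $[\depth_B(b), \cdot]$ (so \textbf{A3}(1) keeps the neighbourhood non-empty) and by processing the finitely many $b$'s of a given length one at a time, each refinement feeding into the next; the finiteness clause \textbf{A2}(1) is essential here and is precisely the place where the proof uses \textbf{A2} rather than merely \textbf{wA2}. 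Once $B$ is built, the verification that $[a,B] \subseteq \X$ is routine: any $C \in [a,B]$ determines a cofinal chain of its own approximations $r_n(C)$, each of which appears (mod finite) in our tree, so the corresponding $\tau$-plays amalgamate into a single total play of $K[a,A]$ with outcome $C$, whence $C \in \X$ because $\tau$ reaches $\X$. Finally, Proposition \ref{prop:kr.equals.r.for.I} upgrades ``$[a,B] \subseteq \X$'' to ``\textbf{I} has a strategy in $K[a,B]$ to reach $\X$'', and such a strategy is a fortiori a strategy in $K[a,A]$ to reach $\X$.
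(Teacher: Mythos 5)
Your overall route is viable and is in essence a repackaging of the paper's argument: the paper extracts from \textbf{II}'s strategy, at each state $s$, the ``suitable functions'' $f_s,g_s$ recording \textbf{II}'s reply to each possible move of \textbf{I}, and applies a diagonalisation lemma (Lemma~\ref{lem:diagonalise.suitable.functions}, proved from \textbf{A4} together with the semiselectivity Lemma~\ref{lem:R.is.semiselective}) \emph{locally} to define \textbf{I}'s next move, so that every play against \textbf{I}'s strategy simulates a play following \textbf{II}'s. You instead run the same machinery \emph{globally} to manufacture a single $B$ with $[a,B]\subseteq\X$ and then quote Proposition~\ref{prop:kr.equals.r.for.I}. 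That is a legitimate alternative organisation. However, your sketch has a genuine gap at the one step that carries the whole proof: the claim that the \textbf{A4}-homogeneous side is the ``realisable'' side. You attribute this to ``the depth-control in \textbf{A3}'', which is not the reason and does not prove it. The correct argument is that the realisable extensions are dense below every condition: if $E\in[b,\last(s_b)]$ had $r_{\lh(b)+1}[b,E]$ contained in the non-realisable side, then feeding $E$ itself (or any $A'\in[b,E]$) to $\tau$ as \textbf{I}'s move forces $\tau$ to answer with some $b^*\in r_{\lh(b)+1}[b,E]$, which is thereby realisable --- a contradiction. Moreover, ``realisable'' is not the right set to colour: you need $b'$ realisable by a move whose reserved neighbourhood $g(A'_{b'})$ satisfies $[b',E]\subseteq[b',g(A'_{b'})]$, for otherwise \textbf{II}'s later moves inside $B$ can escape the reserved sets and the amalgamation of the partial $\tau$-plays at the end is not a legal play. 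Securing this is exactly why the paper interposes the two dense-open families $\D_{b,0},\D_{b,1}$ and a semiselective diagonalisation before invoking \textbf{A4}; your proposal does not contain this step.

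A secondary but real issue is the bookkeeping. The set of one-step extensions of $a$ inside $B$ is in general infinite; what \textbf{A2}(1) makes finite is the set of $b$ with $b\leq_\fin r_m(B)$ for a \emph{fixed} $m$, i.e.\ of bounded depth, not of bounded length. So each $b$ must be processed at a stage tied to $\depth_B(b)$ rather than to $\lh(b)$, and one must arrange that every initial segment of $b$ has been processed by the time $b$ is, which is nontrivial because $c\sqsubseteq b$ does not in general imply $\depth_B(c)\leq\depth_B(b)$ --- this is precisely the enumeration subtlety the paper flags in the remark after Proposition~\ref{prop:kr.equals.r.for.I}. As written, your stage-$n$ invariant (elements of length $\lh(a)+n$ that are $\leq_\fin$ the current approximation) would permanently miss short approximations of large depth, and the final coverage claim ``every $r_{\lh(a)+n}(C)$ appears in the tree'' would fail for the corresponding $C\in[a,B]$.
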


A proof of Proposition \ref{prop:kr.II.strategy.gives.I} for selective topological Ramsey spaces (Definition 5.4, \cite{CD23}) was provided in \cite{CD23}. We shall use the idea presented in \cite{DMU12} to instead prove Lemma 5.5 of \cite{CD23} using a semiselectivity argument. 

\begin{definition}
\label{def:semiselective}
    Let $A \in \cal{R}$ and $a \in \cal{AR}\restrictedto A$.
    \begin{enumerate}
        \item A family of subsets $\vec{\D} = \{\D_b\}_{b \in \cal{AR}\restrictedto[a,A]}$ is \emph{dense open below $[a,A]$} if for all $b \in \cal{AR}\restrictedto[a,A]$, $\D_b$ is a $\leq$-downward closed subset of $[b,A]$, and for all $B \in [b,A]$, there exists some $C \in [b,B]$ such that $C \in \D_b$.
        
        \item Let $\vec{\D} = \{\D_b\}_{b \in \cal{AR}\restrictedto[a,A]}$ be dense open below $[a,A]$. We say that $B \in [a,A]$ \emph{diagonalises} $\vec{\D}$ if for all $b \in \cal{AR}\restrictedto[a,B]$, there exists some $A_b \in \D_b$ such that $[b,B] \subseteq [b,A_b]$.
    \end{enumerate}
\end{definition}

\begin{lemma}
\label{lem:R.is.semiselective}
    If $(\cal{R},\leq,r)$ is an \textbf{A2}-space, then every family of subsets $\vec{\D} = \{\D_b\}_{b \in \cal{AR}\restrictedto[a,A]}$ which is dense open below $[a,A]$ has a diagonalisation.
\end{lemma}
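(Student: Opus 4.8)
The plan is to build the diagonalisation $B \in [a,A]$ by a fusion argument, constructing a $\leq$-decreasing sequence $A = A_0 \geq A_1 \geq \cdots$ inside $[a,A]$ together with an exhaustion $a = b_0 \sqsubseteq$-compatible enumeration of $\cal{AR}\restrictedto[a,A]$, so that each stage handles one $b$ from the family $\vec{\D}$. Concretely, fix an enumeration $(b_n)_{n<\omega}$ of $\cal{AR}\restrictedto[a,A]$ in which $b$ is listed before any $c$ with $b \sqsubseteq c$ (possible since each $b$ has only finitely many $\sqsubseteq$-predecessors, by \textbf{A1}), and refine so that $\depth_{A}(b_n)$ is non-decreasing. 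At stage $n$, given $A_n$, if $b_n \in \cal{AR}\restrictedto[a,A_n]$ has $[b_n,A_n] \neq \emptyset$, use density-openness of $\D_{b_n}$: pick $C \in [b_n, A_n]$ with $C \in \D_{b_n}$, and then use \textbf{A3}(2) to pull $C$ back to some $A_{n+1}' \in [\depth_{A_n}(b_n), A_n]$ with $\emptyset \neq [b_n, A_{n+1}'] \subseteq [b_n, C]$; since $\D_{b_n}$ is $\leq$-downward closed, every element of $[b_n, A_{n+1}']$ lies in $\D_{b_n}$. Then shrink $A_{n+1}'$ once more, using \textbf{A3}, so that $r_{m}(A_{n+1}) = r_m(A_n)$ for all $m \leq n$ (keeping the first $n$ coordinates fixed, which is what makes the fusion converge), obtaining $A_{n+1} \leq A_{n+1}'$.

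Let $B := \lim_{n\to\infty} A_n$, i.e.\ the unique element of $\cal{AR}^{\N}$ with $r_n(B) = r_n(A_n)$ for all $n$; it lies in $\cal{R}$ because $\cal{R}$ is metrically closed (this is where "closed triple" is used), and $B \leq A_n$ for all $n$ because the tails of $B$ are tails of the $A_n$'s — here one invokes \textbf{wA2}(2) together with the coherence clause \textbf{wA2}(3) to see that fixing finitely many coordinates and then passing to a $\leq$-refinement still yields $B \leq A_n$. In particular $B \in [a,A]$. Now given any $b \in \cal{AR}\restrictedto[a,B]$, say $b = b_n$ in our enumeration; since $b \sqsubseteq$ some initial segment of $B$ and the coordinates of $B$ past stage $n$ agree with those of $A_{n+1} \leq A_{n+1}'$, one checks $[b_n, B] \subseteq [b_n, A_{n+1}'] \subseteq [b_n, C] \subseteq [b_n, A_{b_n}]$ for the witness $A_{b_n} := C \in \D_{b_n}$ chosen at stage $n$. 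Thus $B$ diagonalises $\vec{\D}$.

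The main obstacle is the bookkeeping that makes the fusion legal and convergent simultaneously: at stage $n$ we must (i) stay inside $[b_n, \cdot]$ while applying \textbf{A3} to land in the correct depth-neighbourhood, and (ii) preserve enough of an initial segment of $A_n$ so that the limit $B$ exists and still satisfies $b_n \sqsubseteq$ a coordinate of $B$. The tension is that decreasing to $\depth_{A_n}(b_n)$ may disturb coordinates, so the enumeration must be arranged (non-decreasing depth, $\sqsubseteq$-respecting) so that once we have "passed" $b_n$ we never need to touch coordinates below $\lh(b_n)$ again; the finiteness in \textbf{wA2}(w1) is not needed here (countability suffices to enumerate), but \textbf{A3}(2) is doing the real work of converting "there exists a refinement in $\D_{b_n}$" into "there is a refinement in $\D_{b_n}$ of bounded depth with a nonempty neighbourhood contained in it", which is exactly the semiselectivity phenomenon. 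I would also remark that \textbf{A2} (rather than merely \textbf{wA2}) is used only insofar as \textbf{A3} is stated for \textbf{A2}-spaces; the combinatorial core is \textbf{A1} and \textbf{A3}.
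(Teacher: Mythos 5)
Your overall strategy (a fusion against the dense open family, using density of $\D_{b}$ together with \textbf{A3}(2) at each stage) is the same as the paper's, but there is a genuine gap in the bookkeeping, and it sits exactly where the hypothesis \textbf{A2} (rather than \textbf{wA2}) is used. You enumerate $\cal{AR}\restrictedto[a,A]$ as a single sequence $(b_n)_{n<\omega}$ with non-decreasing depth and handle one $b_n$ per stage, and you assert that finiteness is not needed and countability suffices. But an $\omega$-enumeration of $\cal{AR}\restrictedto[a,A]$ with non-decreasing depth exists only when each level $\{b \in \cal{AR} : b \leq_\fin r_n(A)\}$ is finite --- which is precisely clause (1) of \textbf{A2}. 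Under \textbf{wA2} alone a single level can be infinite; then infinitely many $b$ of that one fixed depth must be treated at arbitrarily late stages $n$, and the application of \textbf{A3}(2) at such a stage only lands you in $[\depth_{A_n}(b_n),A_n]$, so only the first $\depth_{A_n}(b_n)$ coordinates of $A_n$ are preserved. Since that depth stays bounded while $n \to \infty$, you destroy coordinates you have already committed to and the fusion does not converge. Relatedly, your repair step --- ``shrink $A_{n+1}'$ once more so that $r_m(A_{n+1}) = r_m(A_n)$ for all $m \leq n$'' --- cannot be carried out: passing to a $\leq$-smaller element never restores a coordinate that $A_{n+1}'$ has already changed. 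The only way to protect the first $n$ coordinates is to guarantee $\depth_{A_n}(b_n) \geq n$ \emph{before} invoking \textbf{A3}(2), and that guarantee is exactly what the finite-per-level enumeration (hence \textbf{A2}(1)) buys you; so the finiteness hypothesis is doing real work and cannot be discarded.

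The fix is what the paper does: at stage $n$, handle in a finite inner induction \emph{all} $b \in \cal{AR}\restrictedto A_n$ with $a \sqsubseteq b$ and $b \leq_\fin a_{n+1}$, where $a_{n+1} = r_{\lh(a)+n+1}(A_n)$ --- finiteness of this set is \textbf{A2}(1) --- with each step of the inner induction staying inside $[a_{n+1},A_n]$, so the committed initial segment is never disturbed. Note also that the quantity controlling which coordinates are at risk is $\depth_{A_n}(b_n)$, not $\lh(b_n)$ (these can differ substantially), and that it should be computed relative to the current $A_n$ rather than the original $A$; indexing the stages by the current restriction $a_{n+1}$, as the paper does, handles both points at once. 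With these changes your argument coincides with the paper's proof.
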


In other words, Lemma \ref{lem:R.is.semiselective} asserts that $\cal{R}$ is a ``semiselective coideal''.

\begin{proof}
    Fix some $A \in \cal{R}$ and $a \in \cal{AR}\restrictedto A$. Suppose that $\vec{\D} = \{\D_b\}_{b \in \cal{AR}\restrictedto[a,A]}$ is dense open below $[a,A]$. We shall define a fusion sequence $(A_n)_{n<\omega}$ in $[a,A]$, with $a_{n+1} := r_{\lh(a)+n+1}(A_n)$, such that $A_{n+1} \in [a_{n+1},A_n]$: Let $A_0 := A$, and suppose that $A_n$ has been defined. Let $\{b_i : i < N\}$ enumerate the set of all $b \in \cal{AR}\restrictedto A_n$ such that $a \sqsubseteq b$ and $b \leq_\fin a_{n+1}$. Let $A_{n+1}^0 := A_n$. If $A_{n+1}^i \in [a_{n+1},A_n]$ has been defined, let $B_{n+1} \in \D_{b_i}$ be such that $B_{n+1} \in [b_i,A_{n+1}^i]$, which exists as $\D_{b_i}$ is dense open in $[b_i,A]$. By \textbf{A3}, we then let $A_{n+1}^{i+1} \in [a_{n+1},A_n^i]$ such that $[b_i,A_{n+1}^{i+1}] \subseteq [b_i,B_{n+1}]$. We complete the induction by letting $A_{n+1} := A_{n+1}^N$. Let $B$ be the limit of the fusion sequence $(A_n)_{n<\omega}$, and we have that $B$ diagonalises $\vec{\D}$. 
\end{proof}

We are now ready to prove Proposition \ref{prop:kr.II.strategy.gives.I}.

\begin{lemma}
\label{lem:diagonalise.suitable.functions}
    Let $(\cal{R},\leq,r)$ be a closed triple satisfying \textbf{A1}-\textbf{A4}. Suppose that $f : [a,A] \to r_{\lh(a)+1}[a,A]$ and $g : [a,A] \to [a,A]$ are two functions such that for all $B \leq A$:
    \begin{enumerate}
        \item $f(B) \in r_{\lh(a)+1}[a,B]$.

        \item $g(B) \in [f(B),B]$.
    \end{enumerate}
    We also say that these two functions $f,g$ are \emph{suitable} in $[a,A]$. Then there exists some $E_{f,g} \in [a,A]$ such that for all $b \in r_{\lh(a)+1}[a,E_{f,g}]$, there exists some $B \in [a,A]$ such that $f(B) = b$ and $[b,E_{f,g}] \subseteq [b,g(B)]$. 
\end{lemma}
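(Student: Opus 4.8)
The plan is to encode the ``suitable'' pair $(f,g)$ as a dense open family below $[a,A]$ and then invoke Lemma \ref{lem:R.is.semiselective} (semiselectivity of $\cal{R}$) together with an Axiom \textbf{A4} argument to cut down to a single $b$-branch on each $b \in r_{\lh(a)+1}[a,E_{f,g}]$. First I would set up, for each $b \in \cal{AR}\restrictedto[a,A]$, a subset $\D_b \subseteq [b,A]$ designed so that diagonalising $\vec{\D}$ forces, below each first-extension $b$ of $a$, the existence of a witness $B$ with $f(B) = b$ and $[b,E] \subseteq [b,g(B)]$. Concretely, for $b$ with $\lh(b) = \lh(a)+1$ one wants $\D_b$ to consist of those $C \in [b,A]$ such that there is some $B \in [a,A]$ with $f(B) = b$ and $[b,C] \subseteq [b,g(B)]$; density of this set follows directly from suitability, since given any $C' \in [b,A]$ we can take $B$ to be a member of $[b,C']$ (so that $f(B) = r_{\lh(a)+1}(B) = b$ and $g(B) \in [b,B] \subseteq [b,C']$, whence the tail $[b,g(B)] \subseteq [b,C']$ is a legal element of $\D_b$). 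For $b$ with $\lh(b) \neq \lh(a)+1$ one can simply set $\D_b := [b,A]$, which is trivially dense open. Openness (downward $\leq$-closure) is immediate because $[b,C'] \subseteq [b,C]$ whenever $C' \leq C$ within $[b,A]$.

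Next, by Lemma \ref{lem:R.is.semiselective} there is $B^* \in [a,A]$ diagonalising $\vec{\D}$, meaning for every $b \in \cal{AR}\restrictedto[a,B^*]$ there is $A_b \in \D_b$ with $[b,B^*] \subseteq [b,A_b]$. For $b$ with $\lh(b) = \lh(a)+1$ this $A_b \in \D_b$ comes, by definition of $\D_b$, with a witness $B \in [a,A]$ such that $f(B) = b$ and $[b,A_b] \subseteq [b,g(B)]$; combined with $[b,B^*] \subseteq [b,A_b]$ this gives $[b,B^*] \subseteq [b,g(B)]$. The only remaining gap is that we need this for \emph{every} $b \in r_{\lh(a)+1}[a,E_{f,g}]$, i.e. for every first extension of $a$ inside the final space, not merely those that happen to lie in $\cal{AR}\restrictedto[a,B^*]$ — but by \textbf{A1}(1) every $b \in r_{\lh(a)+1}[a,B^*]$ does satisfy $a \sqsubseteq b$ and $b \leq_\fin r_{\lh(a)+1}(B^*)$-extendible below $B^*$, so taking $E_{f,g} := B^*$ works directly, with each $b \in r_{\lh(a)+1}[a,E_{f,g}]$ automatically in $\cal{AR}\restrictedto[a,E_{f,g}]$.

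The main obstacle I anticipate is the bookkeeping around \emph{which} $b$'s are relevant: the statement quantifies over $b \in r_{\lh(a)+1}[a,E_{f,g}]$, and after diagonalising we only control $b \in \cal{AR}\restrictedto[a,B^*]$; I would need to check carefully (using \textbf{A3}(1) and the definition of $r_n[a,A]$) that $r_{\lh(a)+1}[a,B^*] \subseteq \cal{AR}\restrictedto[a,B^*]$ and that for each such $b$ the Ellentuck neighbourhood $[b,B^*]$ is nonempty, so that the witness extracted from $\D_b$ genuinely applies. A secondary subtlety is confirming that the witness $B$ produced inside $\D_b$ can be chosen with $B \leq A$ (not merely $B \in [a,A]$, which is the same thing here since $[a,A] \subseteq \{B : B \leq A\}$), so that $f(B), g(B)$ are defined — this is automatic from the domain conventions, but worth a sentence. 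Everything else — density, openness, and the final containment chain $[b,E_{f,g}] \subseteq [b,g(B)]$ — is routine once the family $\vec{\D}$ is correctly defined.
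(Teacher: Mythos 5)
Your proposal follows the paper's strategy in outline (build a dense open family indexed by $b$, diagonalise via Lemma \ref{lem:R.is.semiselective}), but the density claim at its core is false, and this is exactly the difficulty the lemma is designed to overcome. You define $\D_b$ to be the set of $C \in [b,A]$ admitting a witness $B$ with $f(B) = b$ and $[b,C] \subseteq [b,g(B)]$, and you argue density by picking $B \in [b,C']$ and asserting $f(B) = r_{\lh(a)+1}(B) = b$. But $f$ is an \emph{arbitrary} suitable function: the hypothesis only gives $f(B) \in r_{\lh(a)+1}[a,B]$, which is the set of \emph{all} one-step extensions of $a$ compatible with $B$, not the singleton $\{r_{\lh(a)+1}(B)\}$. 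In the intended application (Proposition \ref{prop:kr.II.strategy.gives.I}), $f$ encodes Player \textbf{II}'s choice of $a_1$ in response to \textbf{I} playing $B$, and \textbf{II} is free to play any element of $r_{\lh(a)+1}[a,B]$. So there may be no $B \leq C'$ with $f(B) = b$ at all, and your set $\D_b$ can fail to be dense in $[b,A]$ for some first extensions $b$ of $a$.

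The missing idea is twofold. First, one must enlarge $\D_b$ to a union $\D_{b,0} \cup \D_{b,1}$, where $\D_{b,0}$ is your set and $\D_{b,1}$ consists of those $D \in [b,A]$ below which $\D_{b,0}$ is ``locally empty'' (no $C$ with $g(C) \in [b,D]$ lands in $\D_{b,0}$); the union \emph{is} dense open, and semiselectivity applies. Second, after diagonalising one does not yet know, for a given $b$, whether the witness came from $\D_{b,0}$ (good) or $\D_{b,1}$ (bad); one must colour the first extensions of $a$ accordingly and invoke \textbf{A4} to pass to $E_{f,g}$ on which the colouring is constant, then rule out the bad colour by evaluating $f$ at $E_{f,g}$ itself: $b := f(E_{f,g})$ would lie in $\O_1$ while $g(E_{f,g})$ exhibits a member of $\D_{b,0}$ below the diagonalisation, a contradiction. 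This pruning step is also what guarantees that \emph{every} $b \in r_{\lh(a)+1}[a,E_{f,g}]$ is actually attained as a value of $f$ with the required containment --- something your argument cannot deliver, since it never shrinks $B^*$ after the diagonalisation.
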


\begin{proof}
    For each $b \in r_{\lh(a)+1}[a,A]$, we define:
    \begin{align*}
        \D_{b,0} := \{D \in [b,A] : \exists B \in [a,A] \text{ s.t. } f(B) = b \wedge D \in [b,g(B)]\}, \\
        \D_{b,1} := \{D \in [b,A] : \forall C \in [a,A], \, g(C) \in [b,D] \to g(C) \notin \D_{b,0}\}.
    \end{align*}
    Let $\D_b := \D_{b,0} \cup \D_{b,1}$. Observe that $\D_b$ is dense open in $[b,A]$: Clearly both $\D_{b,0}$ and $\D_{b,1}$ are open. If $D \in [b,A]$ and $D \notin \D_{b,1}$, then there exists some $C \in [a,A]$ such that $g(C) \in [b,D]$. Then $g(C) \leq D$ and $g(C) \in \D_{b,0}$, so $\D_b$ is dense.

    By Lemma \ref{lem:R.is.semiselective}, there exists some $D \in [a,A]$ diagonalising $(\D_b)_{b \in \cal{AR}\restrictedto[a,A]}$. Now let:
    \begin{align*}
        \O_0 := \{b \in r_{\lh(a)+1}[a,D] : \exists B \in \D_{b,0} \, [b,D] \subseteq [b,B]\}, \\
        \O_1 := \{b \in r_{\lh(a)+1}[a,D] : \exists B \in \D_{b,1} \, [b,D] \subseteq [b,B]\}.
    \end{align*}
    By \textbf{A4}, there exists some $E_{f,g} \in [a,D]$ such that $r_{\lh(a)+1}[a,E_{f,g}] \subseteq \O_0$ or $r_{\lh(a)+1}[a,E_{f,g}] \subseteq \O_1$. However, we see that the latter case is not possible: In this case, we let $b := f(E_{f,g})$. Then $b \in r_{\lh(a)+1}[a,E_{f,g}] \subseteq \O_1$, so let $B \in \D_{b,1}$ such that $[b,D] \subseteq [b,B]$. Then $g(E_{f,g}) \in [b,E_{f,g}] \subseteq [b,D] \subseteq [b,B]$, so $g(E_{f,g}) \notin \D_{b,0}$. But $g(E_{f,g}) \in [b,B]$, so this implies that $f(E_{f,g}) \neq b$, a contradiction.

    We shall show that $E_{f,g}$ works. Let $b \in r_{\lh(a)+1}[a,E_{f,g}]$. Then $b \in \O_0$, so there exists some $B \in \D_{b,0}$ such that $f(B) = b$ and $[b,E_{f,g}] \subseteq [b,D] \subseteq [b,g(B)]$, as desired.
\end{proof}

\begin{proof}[Proof of Proposition \ref{prop:kr.II.strategy.gives.I}]
    Let $\sigma$ be a strategy for \textbf{II} in $K[a,A]$ to reach $\X$. We shall construct a strategy $\tau$ for \textbf{I} in $K[a,A]$ to reach $\X$ as follows: We shall assign a state $s$ (for \textbf{II}) in $K[a,A]$ following $\tau$, to a state $t_s$ (for \textbf{II) in $K[a,A]$}, following $\sigma$, such that:
    \begin{enumerate}
        \item $a(s) = a(t_s)$.

        \item $\last(s) \leq \last(t_s)$. 

        \item If $s' \sqsubseteq s$, then $t_{s'} \sqsubseteq t_s$.
    \end{enumerate}
    We begin by defining $t_\emptyset := \emptyset$. Now suppose that $s$ is a state (for \textbf{II}) in $K[a,A]$ following $\tau$ so far. We define the functions $f_s,g_s$ by stipulating that for all $B \in [a(t_s),\last(t_s)]$, $(f_s(B),g_s(B)) := \sigma({t_s}^\frown B)$. Observe that $f_s,g_s$ are suitable in $[a(s),\last(s)]$ (when restricted to $[a(s),\last(s)]$), so by Lemma \ref{lem:diagonalise.suitable.functions} there exists some $E_s \in [a(s),\last(s)]$ such that for all $b \in r_{\lh(a(s))+1}[a(s),E_s]$, there exists some $B_{s,b} \in [a(s),\last(s)]$ such that $f_s(B_{s,b}) = b$ and $[b,E_s] \subseteq [b,g(B_{s,b})]$. Thus, we define $\tau(s) := E_s \in [a(s),\last(s)]$, and for all $b \in r_{\lh(a(s))+1}[a(s),E_s]$ and $C \in [b,E_s]$, define:
    \begin{align*}
        t_{s^\frown(b,C)} := {t_s}^\frown(b,g(B_{s,b})).
    \end{align*}
    Clearly, (1) and (3) of the induction hypothesis are satisfied. (2) is also satisfied, as:
    \begin{align*}
        \last(s) = C \in [b,E_s] \subseteq [b,g(B_{s,b})].
    \end{align*}
    This completes the induction. Since every total state following $\tau$ corresponds to a total state following $\sigma$ with the same outcome, $\tau$ is a strategy for \textbf{I} to reach $\X$.
\end{proof}

Combined with Proposition \ref{prop:kr.closed.under.countable.unions}, we get:

\begin{corollary}
\label{cor:measurable.is.ramsey}
    Let $(\cal{R},\leq,r)$ be a closed triple satisfying \textbf{A1}-\textbf{A4}. Then the set of (Kastanas) Ramsey subsets of $\cal{R}$ forms a $\sigma$-algebra.
\end{corollary}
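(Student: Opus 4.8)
The plan is to deduce the corollary directly from the two ingredients already established: Theorem \ref{thm:kr.iff.r} (equivalently, Propositions \ref{prop:kr.equals.r.for.I} and \ref{prop:kr.II.strategy.gives.I}), which under \textbf{A1}--\textbf{A4} identifies $\bcal{KR}$ with the collection of Ramsey subsets of $\cal{R}$, and Proposition \ref{prop:kr.closed.under.countable.unions}, which supplies closure under countable unions. It then remains only to check that the family of Ramsey subsets of $\cal{R}$ contains $\cal{R}$, is closed under complements, and is closed under countable unions; each of these is short.

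First, $\cal{R}$ is trivially Ramsey, since $[a,B] \subseteq \cal{R}$ for every $A \in \cal{R}$, $a \in \cal{AR}\restrictedto A$ and $B \in [a,A]$, and $[a,A]$ is nonempty by \textbf{A3}(1). Next, closure under complements is immediate from the definition of Ramsey: the witnessing condition ``there exists $B \in [a,A]$ with $[a,B] \subseteq \X$ or $[a,B] \subseteq \X^c$'' is symmetric in $\X$ and $\X^c$, so $\X$ is Ramsey iff $\X^c$ is, whence $\X \in \bcal{KR}$ iff $\X^c \in \bcal{KR}$. Finally, closure under countable unions is precisely Proposition \ref{prop:kr.closed.under.countable.unions} (which applies to any \textbf{wA2}-space, in particular to a closed triple satisfying \textbf{A1}--\textbf{A4}). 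Assembling these three facts shows that $\bcal{KR}$ is a $\sigma$-algebra, and hence also closed under countable intersections.

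I do not anticipate a genuine obstacle here, as all the substantial work has been carried out in the cited results; the one point worth flagging is that self-complementarity of the Kastanas Ramsey property is not automatic — by Proposition \ref{prop:kr.not.symmetric} it can fail for general \textbf{wA2}-spaces — and is recovered in this setting only through the identification with the Ramsey property afforded by Theorem \ref{thm:kr.iff.r}. This is exactly why the full strength of \textbf{A1}--\textbf{A4}, rather than merely \textbf{wA2}, is needed for the corollary.
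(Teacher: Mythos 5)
Your proposal is correct and follows essentially the same route as the paper: identify Kastanas Ramsey with Ramsey via Theorem \ref{thm:kr.iff.r}, get complements for free from the symmetry of the Ramsey definition, and get countable unions from Proposition \ref{prop:kr.closed.under.countable.unions}. The remark about why \textbf{A1}--\textbf{A4} (rather than \textbf{wA2} alone) is needed is apt and matches the paper's own later use of this argument in Corollary \ref{cor:analytically.measurable.is.ramsey}.
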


\section{Kastanas Ramsey Sets and the Projective Hierarchy}
Given a \textbf{wA2}-space $(\cal{R},\leq,r)$, if $\cal{AR}$ is countable then the metrisable topology is Polish, allowing us to discuss the projective hierarchy on $\cal{R}$. We discuss some relationships between Kastanas Ramsey sets and sets in the projective hierarchy.

\subsection{Projective Hierarchy} In this section, we introduce prove a general relationship between Kastanas Ramsey sets and sets in the projective hierarchy, which, in particular, gives a proof of Theorem \ref{thm:analytic.is.kr}.

Given a \textbf{wA2}-space $(\cal{R},\leq,r)$, we shall construct another \textbf{wA2}-space $(\cal{R} \times 2^\omega,\preceq,r)$ as follows:
\begin{enumerate}
    \item $\cal{A}(\cal{R} \times 2^\omega) := \bigcup_{n<\omega} \cal{AR}_n \times 2^n$.

    \item Given $(A,u) \in \cal{R} \times 2^\omega$, let $r_n(A,x) := (r_n(A),u\restrictedto n)$. Thus, if $(a,p) \in \cal{A}(\cal{R} \times 2^\omega)$, then $\lh(a,p) = \lh(a) = |p|$.

    \item We define a $\preceq_\fin$ on $\cal{A}(\cal{R} \times 2^\omega)$ by stipulating that $(a,p) \preceq_\fin (b,q)$ iff $a \leq_\fin b$. 

    \item Given $(A,u),(B,v) \in \cal{R} \times 2^\omega$, we write:
    \begin{align*}
        (A,u) \preceq (B,v) \iff \forall n \, \exists m[r_n(A,u) \preceq_\fin r_m(B,v)].
    \end{align*}
\end{enumerate}

We remark that $\preceq$ is \emph{never} a partial order. For instance, if $a \in \cal{AR}_2$, then $(a,(0,1)) \preceq_\fin (a,(1,0))$ and $(a,(1,0)) \preceq_\fin (a,(0,1))$, but $(a,(0,1)) \neq (a,(1,0))$.

\begin{lemma}
    Let $(\cal{R},\leq,r)$ be a \textbf{wA2}-space (resp. \textbf{A2}-space). Then the closed triple $(\cal{R} \times 2^\omega,\preceq,r)$ defined above is a \textbf{wA2}-space (resp. \textbf{A2}-space) which has a biasymptotic set.
\end{lemma}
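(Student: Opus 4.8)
The plan is to verify the axioms \textbf{A1}, \textbf{wA2} (resp. \textbf{A2}), and \textbf{A3} for $(\cal{R} \times 2^\omega, \preceq, r)$ one at a time, inheriting each from the corresponding property of $(\cal{R},\leq,r)$, and then exhibit an explicit biasymptotic set. First I would check that $(\cal{R} \times 2^\omega, \preceq, r)$ is a closed triple: since $\cal{R}$ is metrically closed in $\cal{AR}^\N$ and $2^\omega$ is closed in $2^\N$, and the map $r_n(A,u) = (r_n(A), u\restrictedto n)$ is precisely the coordinatewise finite-approximation map, $\cal{R} \times 2^\omega$ is closed in $(\cal{A}(\cal{R} \times 2^\omega))^\N$. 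For \textbf{A1}: part (1) is immediate since $r_0(A,u) = (\emptyset, \emptyset)$; part (2) holds because if $(A,u) \neq (B,v)$ then either $A \neq B$ (so $r_n(A) \neq r_n(B)$ for some $n$ by \textbf{A1}(2) for $\cal{R}$) or $u \neq v$ (so $u \restrictedto n \neq v \restrictedto n$ for some $n$); part (3) follows from \textbf{A1}(3) for $\cal{R}$ together with the fact that initial segments of $u$ are determined by longer ones.

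For \textbf{wA2}: condition (w1) requires $\{(a,p) : (a,p) \preceq_\fin (b,q)\}$ to be countable; but this set is contained in $\{a : a \leq_\fin b\} \times 2^{|q|}$, which is a countable set times a finite set, hence countable (if $\leq_\fin$ satisfies \textbf{A2}(1), i.e. finiteness, then $\{a \leq_\fin b\}$ is finite, so the product is finite, giving \textbf{A2}(1) in the \textbf{A2}-space case). Condition (2) is exactly how $\preceq$ was defined in clause (4). Condition (3), the coherence condition, reduces directly to condition (3)/\textbf{A2}(3) for $\leq_\fin$: if $(a,p) \sqsubseteq (b,q)$ and $(b,q) \preceq_\fin (c,r)$, then $a \sqsubseteq b$ and $b \leq_\fin c$, so there is $d \sqsubseteq c$ with $a \leq_\fin d$; taking the appropriate initial segment $(d, r\restrictedto |d|)$ of $(c,r)$ works. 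For \textbf{A3}, I would first note that the Ellentuck neighbourhoods $[(a,p),(A,u)]$ and the depth function $\depth_{(B,v)}(a,p)$ unwind: $\depth_{(B,v)}(a,p) = \depth_B(a)$ since $(a,p) \preceq_\fin r_n(B,v)$ iff $a \leq_\fin r_n(B)$, and $[(a,p),(A,u)]$ is nonempty iff $p = u\restrictedto |a|$ and $[a,A] \neq \emptyset$ in $\cal{R}$; then both parts (1) and (2) of \textbf{A3} transfer from the corresponding statements for $\cal{R}$ by carrying along the $2^\omega$-coordinate unchanged (for part (2), given $A' \in [\depth_B(a),B]$ witnessing \textbf{A3}(2) in $\cal{R}$, the pair $(A', v)$ with the same second coordinate works).

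For the biasymptotic set, I would take $\O := \bigcup_{n \geq 1} \cal{AR}_n \times \{p \in 2^n : p(n-1) = 0\}$, i.e. the set of $(a,p)$ of positive length whose last bit is $0$ (with $\O^c$ restricted to positive length being those whose last bit is $1$). Given any $(a,p)$ and $(A,u)$ with $(a,p) \in \cal{A}(\cal{R} \times 2^\omega)\restrictedto (A,u)$, and any $(B,v) \in [\depth_{(A,u)}(a,p), (A,u)]$, the set $r_{\lh(a)+1}[(a,p),(B,v)]$ consists of pairs $(b, p^\frown i)$ where $b$ ranges over $r_{\lh(a)+1}[a,B]$ (nonempty by \textbf{A3}(1) for $\cal{R}$) and $i \in \{0,1\}$ ranges freely, since the second coordinate is entirely unconstrained by $\preceq_\fin$; hence both $i=0$ and $i=1$ occur, so the neighbourhood meets both $\O$ and $\O^c$. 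The main obstacle — really the only subtle point — is being careful that $\preceq_\fin$ genuinely imposes no constraint on the $2^\omega$-coordinate beyond length, so that the argument for the biasymptotic set goes through and so that the quantifier ``$\forall n \exists m$'' in condition (2) behaves as expected; everything else is a routine transfer of axioms coordinate by coordinate.
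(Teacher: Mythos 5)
Your proposal is correct and follows essentially the same route as the paper, which simply declares the axiom verifications routine and exhibits the last-bit biasymptotic set (yours is the complementary choice of last bit $0$ rather than $1$, which is equivalent). One small slip worth fixing: $[(a,p),(A,u)] = [a,A] \times \{v \in 2^\omega : v\restrictedto|p| = p\}$, which is nonempty whenever $[a,A] \neq \emptyset$ — the extra condition $p = u\restrictedto|a|$ you state is not required (and if it were, \textbf{A3}(1) would actually fail for the product space, since $[\depth_{(B,v)}(a,p),(B,v)]$ contains pairs $(A,u)$ with arbitrary second coordinate); the point, which you correctly use in the biasymptotic argument, is that $\preceq$ and $\preceq_\fin$ place no constraint whatsoever linking the binary coordinates, so nonemptiness transfers directly from $\cal{R}$.
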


\begin{proof}
    It is easy to verify that $(\cal{R} \times 2^\omega,\preceq,r)$ satisfies \textbf{A1}, \textbf{wA2} (resp. \textbf{A2}) and \textbf{A3}. A biasymptotic set would be:
    \begin{align*}
        \O = \{(a,p) \in \cal{A}(\cal{R} \times 2^\omega) : |p| > 0 \wedge p(|p|-1) = 1\}.
    \end{align*}
\end{proof}

Let $\pi_0 : \cal{R} \times 2^\omega \to \cal{R}$ denote the projection to the first coordinate, which is a surjective map which respects $\leq$ (i.e. if $(A,p) \preceq (B,q)$ then $\pi_0(A,p) \leq \pi_0(B,q)$). We also use $\vec{0}$ to denote the infinite tuple of zeroes $(0,0,0,\dots) \in 2^\omega$.

\begin{lemma}
    Let $(\cal{R},\leq,r)$ be a \textbf{wA2}-space. Let $\cal{C} \subseteq \cal{R} \times 2^\omega$ be a subset. Let $A \in \cal{R}$ and $a \in \cal{AR}\restrictedto A$. If \textbf{II} has a strategy in $K[(a,p),(A,\vec{0})]$ to reach $\cal{C}$ for some $p \in 2^{\lh(a)}$, then \textbf{II} has a strategy in $K[a,A]$ to reach $\pi_0[\cal{C}]$.
\end{lemma}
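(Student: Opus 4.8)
The plan is to show that Player \textbf{II}'s strategy in the product game $K[(a,p),(A,\vec 0)]$ can be projected coordinatewise to a strategy in $K[a,A]$. The key observation is that the first-coordinate projection $\pi_0 : \cal R \times 2^\omega \to \cal R$ maps the Ellentuck neighbourhoods of the product space onto those of $\cal R$, in the sense that $\pi_0$ carries $[(b,q),(B,v)]$ onto $[b,B]$ (surjectively, by the remark that $\preceq$ only constrains the first coordinate), and likewise carries $r_{\lh(b,q)+1}[(b,q),(B,v)]$ onto $r_{\lh(b)+1}[b,B]$. Thus every legal move available to a player in $K[a,A]$ lifts to a legal move in $K[(a,p),(A,\vec 0)]$, and conversely every legal move in the product game projects to a legal move below.

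Concretely, I would let $\tau^*$ be the given strategy for \textbf{II} in $K[(a,p),(A,\vec 0)]$ reaching $\cal C$, and define a strategy $\tau$ for \textbf{II} in $K[a,A]$ by simulating a play of the product game alongside the real play. When \textbf{I} plays $A_n \in [a_n, B_{n-1}]$ in $K[a,A]$ (with $a_0 = a$, $B_{-1} = A$), I lift it to the move $(A_n, v_n) \in [(a_n, q_n), (B_{n-1}, v_{n-1})]$ in the product game, where $v_n$ is any extension of $q_n$ witnessing $(A_n,v_n) \preceq (B_{n-1},v_{n-1})$ — this is possible precisely because $\preceq$ imposes no condition beyond $A_n \le B_{n-1}$. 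Then $\tau^*$ responds with some $(a_{n+1}, q_{n+1}) \in r_{\lh(a_n,q_n)+1}[(a_n,q_n),(A_n,v_n)]$ and some $(B_n, w_n) \in [(a_{n+1},q_{n+1}),(A_n,v_n)]$; I have \textbf{II} play $a_{n+1} := \pi_0(a_{n+1},q_{n+1})$ and $B_n$ in $K[a,A]$, which are legal by the projection property above. Maintaining inductively that the simulated play is a legal partial play following $\tau^*$ with first coordinates matching the real play, I get that any total play following $\tau$ arises as $\pi_0$ of a total play following $\tau^*$.

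For the outcome: a total play of $K[a,A]$ following $\tau$ has outcome $B = \lim_n a_{n}$, while the simulated total play following $\tau^*$ has outcome $(B', u) = \lim_n (a_n, q_n)$; since the first coordinates agree at every stage, $B' = B$, so $B = \pi_0(B', u) \in \pi_0[\cal C]$ because $(B',u) \in \cal C$. Hence $\tau$ is a strategy for \textbf{II} in $K[a,A]$ to reach $\pi_0[\cal C]$.

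The only genuinely delicate point — and the one I would state as a small preliminary claim before the main construction — is the bookkeeping that the lifted moves are always legal: one must check that $(a_{n+1}, q_{n+1}) \in r_{\lh(a_n)+1}[(a_n,q_n),(A_n,v_n)]$ forces $a_{n+1} \in r_{\lh(a_n)+1}[a_n, A_n]$ (immediate from $r_n(A,u) = (r_n(A), u\restrictedto n)$ and the definition of $\preceq_\fin$), and symmetrically that \textbf{I}'s real move $A_n \in [a_n, B_{n-1}]$ lifts, which requires choosing the binary second coordinate $v_n$ freely — there is no obstruction since $\preceq_\fin$ ignores it. Everything else is a routine induction on the turn number, so I would not belabour it.
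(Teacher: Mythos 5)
Your proof is correct and follows essentially the same route as the paper's (one-line) argument: simulate the product game, lift Player \textbf{I}'s moves by attaching a second coordinate, and project Player \textbf{II}'s responses back down, noting that $\preceq$ and $\preceq_\fin$ ignore the binary coordinate. In fact you are slightly more careful than the paper, which writes all lifted moves as $(A_i,\vec{0})$ even though legality requires the second coordinate to extend $p$ and the bits $q_n$ chosen by $\sigma$; your choice of $v_n$ extending $q_n$ handles this correctly.
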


\begin{proof}
    If $\sigma$ is a strategy for \textbf{II} in $K[(a,p),(A,\vec{0})]$ to reach $\cal{C}$, then the strategy $\tau$ for \textbf{II} in $K[a,A]$ defined by $\tau(A_0,\dots,A_{n-1}) := (b,B)$, where $\sigma((A_0,\vec{0}),\dots,(A_{n-1},\vec{0})) = ((b,p),(B,u))$ for some $p,u$, is a strategy to reach $\pi_0[\cal{C}]$.
\end{proof}

\begin{lemma}
    Let $(\cal{R},\leq,r)$ be a \textbf{wA2}-space. Let $\cal{C} \subseteq \cal{R} \times 2^\omega$ be a subset. Let $A \in \cal{R}$ and $a \in \cal{AR}\restrictedto A$. If for all $p \in 2^{\lh(a)}$ and $B \in [a,A]$, there exists some $C \in [a,B]$ such that \textbf{I} has a strategy in $K[(a,p),(C,\vec{0})]$ to reach $\cal{C}^c$, then \textbf{I} has a strategy in $K[a,A]$ to reach $\pi_0[\cal{C}]^c$.
\end{lemma}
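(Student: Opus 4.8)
The hypothesis gives a map which, for every initial condition below $[a,A]$ and every finite $2$-string $p$ of the right length, produces a refinement on which \textbf{I} wins the $2^\omega$-lifted game. The plan is to run \textbf{I}'s strategy in $K[a,A]$ by simultaneously carrying along, for each of the finitely many relevant strings $p \in 2^{\lh(a)}$, a ``shadow play'' of the lifted game $K[(a,p),(C_p,\vec 0)]$ following \textbf{I}'s winning strategy there, and to feed \textbf{II}'s moves in the real game into all the shadow games at once. The key point is that $\lh(a)$ is finite, so $2^{\lh(a)}$ is a finite set, and at each stage one only needs finitely many auxiliary objects; one can amalgamate them using \textbf{A3} (iterated refinement) exactly as in the fusion arguments already used in this section (cf. the proof of Lemma~\ref{lem:R.is.semiselective} and Lemma~\ref{lem:diagonalise.suitable.functions}).

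More precisely, I would proceed as follows. First I would set up an induction on the turn number $n$, maintaining: (i) a state $s = (A_0, a_1, B_0, \dots, A_{n-1})$ for \textbf{I} in $K[a,A]$ following the strategy $\tau$ being constructed; (ii) for each $q \in 2^{\lh(a_{n-1})}$ (where $a_0 := a$), a state $t^q_s$ for \textbf{I} in the lifted game following \textbf{I}'s winning strategy $\sigma_q$ to reach $\cal C^c$, such that the $\cal R$-component of $t^q_s$ is term-by-term below $s$, and $a(t^q_s) = (a(s), q)$. At stage $n$, \textbf{II} has just played $(a_n, B_{n-1})$ in the real game; $a_n$ extends $a_{n-1}$ by one more element of $\cal{AR}$, so for each extension $q' \in 2^{\lh(a_n)}$ of a relevant $q$, I feed the move $(a_n, q')$ together with an appropriate refinement into the shadow game $K[\dots]$ following $\sigma_q$, obtaining \textbf{I}'s next shadow move $A^{q'}$. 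There are only finitely many such $q'$, so by repeatedly applying \textbf{A3} I can find a single $A_n \in [a_n, B_{n-1}]$ with $[a_n, A_n] \subseteq [a_n, A^{q'}]$ for all relevant $q'$ simultaneously; this $A_n =: \tau(s \restrictedto n)$ is \textbf{I}'s move in the real game, and $t^{q'}_{s \frown (a_n, A_n)} := (t^q_s) \frown ((a_n, q'), A^{q'})$ (reindexed) maintains the hypothesis. One also needs the initialisation: at $n = 0$ apply the hypothesis with $B = A$ to each of the finitely many $p \in 2^{\lh(a)}$ in turn, refining successively via \textbf{A3}, to get a single $A_\emptyset \in [a,A]$ below all the witnesses $C_p$, and set the shadow games to start at $(A_\emptyset, \vec 0)$ under $\sigma_p$.

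Finally I would check that $\tau$ reaches $\pi_0[\cal C]^c$: given a total state following $\tau$ with outcome $A(s) \in \cal R$, the play made by \textbf{II} determines, for each infinite $x \in 2^\omega$, a corresponding total play of the lifted game following $\sigma_{x \restrictedto \lh(a)}$ with outcome $(A(s), x)$; since \textbf{I} won that game, $(A(s), x) \in \cal C^c$. As $x$ ranges over all of $2^\omega$ this gives $(A(s), x) \notin \cal C$ for every $x$, i.e. $A(s) \notin \pi_0[\cal C]$, so $A(s) \in \pi_0[\cal C]^c$ as required. The main obstacle is the bookkeeping in the inductive step — ensuring the refinements are compatible across all finitely many strings $q'$ and that \textbf{II}'s real move $(a_n, B_{n-1})$ is actually legal to replay in every shadow game (which holds because $\preceq_\fin$ on the lift depends only on the $\cal R$-coordinate, so $B_{n-1} \leq \last(s\restrictedto(n-1))$ translates into a legal lifted move regardless of the $2^\omega$-coordinate); neither is deep, but both require care to state precisely.
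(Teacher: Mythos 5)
Your overall architecture is exactly the paper's: run one shadow play of the lifted game for each binary string of the current length, feed \textbf{II}'s real moves into all shadow games, play below all the shadow responses, and at the end observe that for every $x \in 2^\omega$ the outcome pairs with $x$ to give a point of $\cal{C}^c$, so the outcome avoids $\pi_0[\cal{C}]$. The initialisation and the final verification are fine. However, the one step that carries the weight of the proof --- producing a single $A_n \in [a_n,B_{n-1}]$ below all of the finitely many shadow responses $A^{q'}$ --- is wrong as you state it. If you feed the move $(a_n,q')$ with the same set $(B_{n-1},\vec{0})$ into each shadow game independently, the responses $A^{q'}$ are pairwise incomparable elements of $[a_n,B_{n-1}]$, and \textbf{A3} gives no common refinement of incomparable conditions: already in $[\N]^\infty$ two strategy responses below $B_{n-1}$ can be almost disjoint, so no $A_n$ with $[a_n,A_n] \subseteq [a_n,A^{q'}]$ for all $q'$ need exist. \textbf{A3}(2) only re-positions a \emph{single} condition $A \leq B$ to the right depth inside $B$; it is not an amalgamation principle. (The fusion arguments you cite as precedent work because density lets one choose each successive condition \emph{inside} the previously constructed one --- that nesting is what you are missing, not a feature of \textbf{A3}.)

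The repair is to exploit the rules of the Kastanas game rather than \textbf{A3}: enumerate the relevant strings as $q_0,\dots,q_{m-1}$ and thread the shadow games, i.e.\ feed $(a_{n+1},q_0)$ together with $(B_n,\vec{0})$ into the first shadow game to get $D_0$, then feed $(a_{n+1},q_1)$ together with $(D_0,\vec{0})$ into the second to get $D_1 \in [a_{n+1},D_0]$, and so on. Since \textbf{I}'s response in the lifted game must lie inside \textbf{II}'s last played set, the $D_k$ form a decreasing chain for free, and $\tau$ plays the last one, which is below every shadow response. This is precisely what the paper does, and it is legal because $\preceq_{\fin}$ on the lift ignores the $2^\omega$-coordinate (a point you correctly noted). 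With that substitution your argument goes through; without it, the inductive step does not close.
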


\begin{proof}
    We shall construct a strategy $\tau$ for \textbf{I} in $K[a,A]$ to reach $\pi_0[\cal{C}]^c$. Let $\{p_k : k < 2^{\lh(a)}\}$ enumerate the set $2^{\lh(a)}$. We define a decreasing sequence $C_0 \geq C_1 \geq \cdots C_{2^{\lh(a)}-1}^0$ as follows: Let $\sigma_{p_0}$ be the strategy for \textbf{I} in $K[(a,p_0),(A,\vec{0})]$ to reach $\cal{C}^c$, and let $C_0 := \sigma_{p_0}(\emptyset)$. We also define $B_{p_0} := A$. Suppose that $A_k^0$ has been defined. By the hypothesis, there exists some $B_{p_{k+1}} \in [a,C_k]$ such that \textbf{I} has a strategy in $K[(a,p_{k+1}),(B_{p_{k+1}},\vec{0})]$ to reach $\cal{C}^c$. Now let $C_{k+1} := \sigma_{p_{k+1}}(\emptyset)$. Having constructed the above sequence, we now define $\tau(\emptyset) := C_{2^{\lh(a)}-1}$. 

    Note that we may assume that for all partial states $t$ of $K[(a,p),(B_p,\vec{0})]$ for \textbf{II}, $\sigma_p(t) = (B,\vec{0})$ for some $B \in \cal{R}$. Now let $s$ be a partial state of $K[a,A]$ for \textbf{II} following $\tau$ so far, and $\rank(s) = n$, and assume that $\tau(s)$ has been defined. Suppose for the induction hypothesis that we have a set $\{t_{s,q} : q \in 2^{\lh(a)+n}\}$ such that:
    \begin{enumerate}
        \item $t_{s,q}$ is a partial state of $K[(a,q\restrictedto\lh(a)),(B_{q\restrictedto\lh(a)},\vec{0})]$ following $\sigma_{q\restrictedto\lh(a)}$ with $a(t_{s,q}) = (a(s),q)$.

        \item If $\sigma_{q\restrictedto\lh(a)}(t_{s,q}) = (A_{s,q},\vec{0})$, then $\tau(s) \in [a(s),A_{s,q}]$. 
    \end{enumerate}
    Note that for the base case, for each $p \in 2^{\lh(k)}$ we let $t_{\emptyset,a}$ be the empty state of the game $K[(a,p),(B_p,\vec{0})]$. For each $a_{n+1} \in r_{\lh(a(s))+1}[a(s),\tau(s)]$ and $B_n \in [a_{n+1},\tau(s)]$, Let $\{q_k : k < 2^{\lh(a)+n+1}\}$ enumerate the set $2^{\lh(a)+n+1}$, and for each $k$ we let $q_k' := q_k\restrictedto(\lh(a)+n)$ and $p_k := q_k\restrictedto\lh(a)$ (which differs from the enumeration in the first paragraph, but it doesn't matter). We define a decreasing sequence $D_0 \geq D_1 \geq \cdots \geq D_{2^{\lh(a)+n+1}-1}$ as follows: Let $D_0 := \sigma_{p_0}({t_{s,q_0'}}^\frown((a_{n+1},q_0),(B_n,\vec{0})))$. Assuming that $D_k$ has been defined, we let $D_{k+1} := \sigma_{p_k}({t_{s,q_k'}}^\frown((a_{n+1},q_k),(D_k,\vec{0})))$. Note that by the induction hypothesis, all the partial states listed above are legal. We conclude the construction of $\tau$ by asserting that $\tau(s^\frown(a_{n+1},B_n)) := D_{2^{\lh(a)+n+1}-1}$.

    We shall now show that $\tau$ is a strategy for \textbf{I} in $K[a,A]$ to reach $\pi_0[\cal{C}]^c$. If not, then there exists some complete play $K[a,A]$ following $\tau$ such that \textbf{II} plays $(a_1,B_0,a_2,B_1,\dots)$, and $C := \lim_{n\to\infty} a_n \in \pi_0[\cal{C}]$. In particular, we have that $(C,x) \in \cal{C}$ for some $x \in 2^\omega$. For each $n$, let $q_n := x\restrictedto(\lh(a)+n)$ and let $p := q_0$. By our construction of $\tau$, there exists a complete play of the game $K[(a,p),(B_p,\vec{0})]$ following $\sigma_p$ such that \textbf{II} plays $((a_1,q_1),(B_0,\vec{0}),(a_2,q_2),(B_1,\vec{0}),\dots)$. But since $\sigma_p$ is a strategy for \textbf{I} in $K[(a,p),(A,\vec{0})]$ to reach $\cal{C}^c$, we have that $(C,x) \in \cal{C}^c$, a contradiction.
\end{proof}

These two lemmas lead us to the following results.

\begin{theorem}
    Let $(\cal{R},\leq,r)$ be a \textbf{wA2}-space. If $\cal{C} \subseteq \cal{R} \times 2^\omega$ is Kastanas Ramsey, then $\pi_0[\cal{C}] \subseteq \cal{R}$ is Kastanas Ramsey.
\end{theorem}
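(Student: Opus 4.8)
The plan is to combine the two preceding lemmas, after first reducing the statement to their exact hypotheses. Fix $A \in \cal{R}$ and $a \in \cal{AR}\restrictedto A$; I want to find $B \in [a,A]$ witnessing that $\pi_0[\cal{C}]$ is Kastanas Ramsey. The natural starting point is the element $(A,\vec 0) \in \cal{R}\times 2^\omega$ together with the finite approximation $(a,\vec 0\restrictedto\lh(a)) \in \cal{A}(\cal{R}\times 2^\omega)$, but the real subtlety is that Kastanas Ramseyness of $\cal{C}$ only gives us, for each $p \in 2^{\lh(a)}$, \emph{some} element of $[(a,p),(A,\vec 0)]$ that works; these elements live over different ``columns'' $p$ and must be merged into a single $B \in [a,A]$.

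First I would enumerate $2^{\lh(a)} = \{p_0,\dots,p_{N-1}\}$ with $N = 2^{\lh(a)}$, and build a decreasing chain $A = A_{-1} \geq A_0 \geq \cdots \geq A_{N-1}$ in $[a,A]$ as follows: given $A_{k-1}$, apply Kastanas Ramseyness of $\cal{C}$ to the pair $((a,p_k),(A_{k-1},\vec 0))$ — more precisely to an element of $[(a,p_k),(A_{k-1},\vec 0)]$ of the form $(A_{k-1},\vec 0)$, using $\cal{A}(\cal{R}\times 2^\omega) = \bigcup_n \cal{AR}_n\times 2^n$ and the definition of $\preceq$ — to obtain $(C_k,v_k) \preceq (A_{k-1},\vec 0)$ such that either (I) \textbf{I} has a strategy in $K[(a,p_k),(C_k,v_k)]$ to reach $\cal{C}^c$, or (II) \textbf{II} has a strategy in $K[(a,p_k),(C_k,v_k)]$ to reach $\cal{C}$. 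Since $(C_k,v_k) \preceq (A_{k-1},\vec 0)$ forces $v_k = \vec 0$ componentwise below the relevant length, we may take $v_k = \vec 0$ and set $A_k := C_k \in [a, A_{k-1}]$. Let $B := A_{N-1}$.

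Now split on whether case (II) ever occurs. If for some $p_k$ we are in case (II), then \textbf{II} has a strategy in $K[(a,p_k),(B,\vec 0)]$ to reach $\cal{C}$ (restricting the strategy from $[(a,p_k),(A_k,\vec 0)]$ to the smaller Ellentuck neighbourhood below $(B,\vec 0)$, using $B \leq A_k$); then the second of the three lemmas above gives \textbf{II} a strategy in $K[a,B]$ to reach $\pi_0[\cal{C}]$, so $B$ witnesses clause (2) of Kastanas Ramseyness for $\pi_0[\cal{C}]$. Otherwise, case (I) holds for every $p \in 2^{\lh(a)}$; I must then verify the hypothesis of the third lemma, namely that for all $p \in 2^{\lh(a)}$ and all $B' \in [a,B]$ there is $C \in [a,B']$ with \textbf{I} having a strategy in $K[(a,p),(C,\vec 0)]$ to reach $\cal{C}^c$. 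This is immediate by taking $C := B'$ and restricting the case-(I) strategy for $p$ (obtained below $(A_k,\vec 0) \geq (B,\vec 0) \geq (B',\vec 0)$) down to $K[(a,p),(B',\vec 0)]$, since a strategy for \textbf{I} to reach $\cal{C}^c$ below a larger neighbourhood restricts to one below any smaller neighbourhood of the same base approximation. The third lemma then yields a strategy for \textbf{I} in $K[a,B]$ to reach $\pi_0[\cal{C}]^c$, so $B$ witnesses clause (1).

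The step I expect to be the main obstacle is the bookkeeping in the ``otherwise'' case: making sure the restriction of a strategy witnessing case (I) at stage $k$, originally defined below $(A_k,\vec 0)$, genuinely remains a valid strategy below $(B,\vec 0)$ and below arbitrary $(B',\vec 0)$ with $B' \in [a,B]$, and that this matches precisely the quantifier structure demanded by the third lemma. One should check that restricting the Ellentuck neighbourhood does not invalidate any legal move prescribed by \textbf{I}'s strategy — this uses that $[a,B'] \subseteq [a,B] \subseteq [a,A_k]$ and that all of \textbf{I}'s moves in the Kastanas game are of the form $A_i \in [a_i, B_{i-1}]$, which remain legal when the ambient object shrinks. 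Everything else is a routine merge of finitely many neighbourhoods and an appeal to the two lemmas, so no delicate combinatorics beyond what is already packaged in them is needed.
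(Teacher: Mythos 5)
Your case split is the right one, and your handling of case (II) is fine (strategies for \textbf{II} in $K[a,A]$ do restrict to $K[a,B]$ for $B \in [a,A]$, since only \textbf{I}'s opening move references the ambient element). But case (I) contains a genuine gap, precisely at the step you flagged: a strategy $\sigma$ for \textbf{I} in $K[(a,p),(C,\vec 0)]$ does \emph{not} restrict to a strategy in $K[(a,p),(B',\vec 0)]$ for $B' \in [a,C]$. Your justification only covers \textbf{I}'s moves from turn $2$ onward, which are indeed constrained by \textbf{II}'s previous play; the obstruction is the opening move $\sigma(\emptyset) \in [(a,p),(C,\vec 0)]$, which in general fails to lie in $[(a,p),(B',\vec 0)]$, and there is no way to repair this. (Concretely, in the Ellentuck space $[\N]^\infty$, \textbf{I} has a strategy in $K[\emptyset,\N]$ to reach $\{X : X \subseteq 2\N\}$ by opening with the evens, but has no strategy in $K[\emptyset,2\N+1]$ to reach that set, even though $2\N+1 \in [\emptyset,\N]$. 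This is also why Proposition \ref{prop:kr.equals.r.for.I} characterises \textbf{I}'s strategies by the existence of \emph{some} $B$ with $[a,B] \subseteq \X$, not all.) A secondary, harmless slip: $\preceq$ on $\cal{R}\times 2^\omega$ ignores the second coordinate entirely, so $(C_k,v_k) \preceq (A_{k-1},\vec 0)$ does not force $v_k = \vec 0$; you may still replace $v_k$ by $\vec 0$ because the game $K[(a,p),(C,v)]$ does not depend on $v$, but not for the reason you give.

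The fix makes the proof shorter, not longer: the merging construction is unnecessary because the hypothesis of the second lemma already has the quantifier structure ``for all $p$ and all $B \in [a,A]$ there \emph{exists} $C \in [a,B]$,'' which is exactly what Kastanas Ramseyness of $\cal{C}$ hands you pointwise. Argue as follows. If there exist $p \in 2^{\lh(a)}$ and $B \in [a,A]$ such that \textbf{II} has a strategy in $K[(a,p),(B,\vec 0)]$ to reach $\cal{C}$, apply the first lemma to get clause (2) at $B$. Otherwise, fix any $B \in [a,A]$; for every $p$ and every $B' \in [a,B]$, apply Kastanas Ramseyness of $\cal{C}$ at $((a,p),(B',\vec 0))$ to obtain $(C,v) \in [(a,p),(B',\vec 0)]$ homogeneous for one of the two players; since we are not in the first case, it must be \textbf{I} who has a strategy in $K[(a,p),(C,\vec 0)]$ to reach $\cal{C}^c$, and $C \in [a,B']$. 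This verifies the hypothesis of the second lemma verbatim (with $B$ in place of $A$), yielding clause (1) at $B$, with no downward restriction of \textbf{I}'s strategies ever required.
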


\begin{theorem}
\label{thm:kr.and.projection}
    Let $(\cal{R},\leq,r)$ be a \textbf{wA2}-space, and assume that $\cal{AR}$ is countable. 
    \begin{enumerate}
        \item Every analytic subset of $\cal{R}$ is Kastanas Ramsey.

        \item If every coanalytic subset of $\cal{R} \times 2^\omega$ is Kastanas Ramsey, then every $\mathbf{\Sigma}_2^1$ subset of $\cal{R}$ is Kastanas Ramsey. More generally, for every $n \geq 1$, if every $\mathbf{\Pi}_n^1$ subset of $\cal{R} \times 2^\omega$ is Kastanas Ramsey, then every $\mathbf{\Sigma}_{n+1}^1$ subset of $\cal{R}$ is Kastanas Ramsey.
    \end{enumerate}
\end{theorem}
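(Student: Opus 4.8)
The plan is to derive Theorem~\ref{thm:kr.and.projection} directly from the two preceding projection lemmas together with Theorem~\ref{thm:analytic.is.kr}'s infrastructure, using the standard tree/unfolding representation of projective sets. The key structural fact I would invoke is that the closed triple $(\cal{R}\times 2^\omega,\preceq,r)$ constructed just above is again a \textbf{wA2}-space, so the notion of Kastanas Ramsey makes sense there, and the projection $\pi_0$ behaves as in the two lemmas: if $\cal{C}\subseteq\cal{R}\times 2^\omega$ is Kastanas Ramsey then $\pi_0[\cal{C}]$ is Kastanas Ramsey, and the converse-flavoured lemma lets \textbf{I} win in $K[a,A]$ to reach $\pi_0[\cal{C}]^c$ whenever she can win all the ``fibrewise'' games $K[(a,p),(C,\vec 0)]$ to reach $\cal{C}^c$.

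For part (1), I would proceed as follows. First, every closed subset of $\cal{R}$ is Kastanas Ramsey: given $A$ and $a$, if $[a,A]$ meets the closed set $\cal{F}$ then by closedness (in the metrisable topology) \textbf{II} can steer the outcome into $\cal{F}$ along a decreasing sequence of approximations, otherwise \textbf{I} trivially reaches $\cal{F}^c$ since $[a,A]\subseteq\cal{F}^c$ already and she can just keep playing inside $[a,A]$; one must check the game mechanics but this is exactly the kind of argument behind the clopen case of Proposition~\ref{prop:a4.iff.ellentuck.theorem} adapted to the game. Next, since $\cal{AR}$ is countable, $\cal{R}$ and hence $\cal{R}\times 2^\omega$ are Polish, and every analytic $\X\subseteq\cal{R}$ is $\pi_0[\cal{F}]$ for some closed $\cal{F}\subseteq\cal{R}\times 2^\omega$ (the usual Suslin representation, after identifying $2^\omega$ with $\omega^\omega$ up to a Borel iso, or working directly with $\cal{R}\times\omega^\omega$ — one should say which). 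Applying the closed case to $\cal{F}$ inside $(\cal{R}\times 2^\omega,\preceq,r)$ and then the projection lemma, $\X=\pi_0[\cal{F}]$ is Kastanas Ramsey.

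For part (2), the argument is the same unfolding one level up. A $\mathbf{\Sigma}^1_{n+1}$ subset $\X$ of $\cal{R}$ is $\pi_0[\cal{C}]$ for some $\mathbf{\Pi}^1_n$ set $\cal{C}\subseteq\cal{R}\times 2^\omega$ (again via the standard projective normal form, taking a product with $2^\omega$ or $\omega^\omega$). Assuming every $\mathbf{\Pi}^1_n$ subset of $\cal{R}\times 2^\omega$ is Kastanas Ramsey, $\cal{C}$ is Kastanas Ramsey, so by the projection theorem $\pi_0[\cal{C}]=\X$ is Kastanas Ramsey. The case $n=1$ is the stated coanalytic$\to\mathbf{\Sigma}^1_2$ instance. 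The only subtlety to address is bookkeeping: the two projection lemmas are phrased for the specific product space $\cal{R}\times 2^\omega$, so I would note that $2^\omega$ may be replaced by $\omega^\omega$ (or one applies the product construction twice, or uses a Borel bijection $2^\omega\cong\omega^\omega$ which is harmless since Kastanas Ramsey-ness of a set in the product depends only on its projective complexity, and the projective hierarchy is invariant under such bijections), and that $\mathbf{\Pi}^1_n$/$\mathbf{\Sigma}^1_{n+1}$ are correctly matched across the projection.

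The main obstacle I anticipate is the base case of part (1): carefully verifying that closed sets are Kastanas Ramsey in an arbitrary \textbf{wA2}-space, where neither \textbf{A2} nor \textbf{A4} is available. One has to show that when $[a,A]\cap\cal{F}\neq\emptyset$, Player \textbf{II} genuinely has a \emph{strategy} — not just an ability to play one branch — that keeps the outcome in $\cal{F}$; this uses only that $\cal{R}$ is metrically closed in $\cal{AR}^\N$ and that at each stage \textbf{II} may refine to a nonempty Ellentuck neighbourhood still meeting $\cal{F}$, which is where \textbf{A3} and the closedness hypothesis do the work. Everything after that is a routine application of the projection machinery already established, so the theorem follows.
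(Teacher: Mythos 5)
Your overall route --- unfold $\X$ to a set in the product space $(\cal{R}\times 2^\omega,\preceq,r)$ and push Kastanas Ramseyness through $\pi_0$ via the two projection lemmas --- is exactly the paper's intent, and your part (2) is essentially correct. But part (1) as written has two genuine gaps. First, the base case is wrong: it is \emph{not} true that if $[a,A]\cap\cal{F}\neq\emptyset$ for a closed set $\cal{F}$ then \textbf{II} can steer the outcome into $\cal{F}$. In $K[a,A]$ it is Player \textbf{I} who chooses the neighbourhood $[a_n,A_n]$ from which \textbf{II} must pick her next approximation; for instance in the Ellentuck space with $\cal{F}=\{B\}$ a singleton, \textbf{I} plays $A_0$ omitting $\min B$ on the first move and \textbf{II} can never again extend towards $B$, even though $[a,A]\cap\cal{F}\neq\emptyset$. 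So the dichotomy you describe fails, and no amount of ``refining to a neighbourhood still meeting $\cal{F}$'' repairs it, since \textbf{I} controls the refinements. The correct engine is determinacy: the payoff ``outcome $\in\cal{F}$'' is a Borel subset of the space of plays (the outcome map is continuous), so by Borel determinacy either \textbf{II} has a strategy in $K[a,A]$ to reach $\cal{F}$ or \textbf{I} has one to reach $\cal{F}^c$, giving Kastanas Ramseyness with $B=A$. This is precisely the argument the paper uses for the biasymptotic-set proposition, and it applies to every Borel set of the product space.

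Second, your Suslin representation is incorrect: $2^\omega$ is compact, so the projection of a \emph{closed} subset of $\cal{R}\times 2^\omega$ is closed; not every analytic set arises this way. You must instead write an analytic $\X$ as $\pi_0[\cal{C}]$ for a $G_\delta$ (hence Borel) subset $\cal{C}\subseteq\cal{R}\times 2^\omega$, obtained by embedding $\omega^\omega$ as a $G_\delta$ subspace of $2^\omega$ --- this is the representation the paper itself records in its closing remarks --- and then invoke the Borel-determinacy fact above to see that $\cal{C}$ is Kastanas Ramsey in the product \textbf{wA2}-space before projecting. Note also that switching to $\cal{R}\times\omega^\omega$ is \emph{not} harmless: the projection lemma for Player \textbf{I} enumerates the finitely many elements of $2^{\lh(a)+n+1}$ and builds a finite decreasing chain of responses at each turn, and that finiteness is lost over $\omega^\omega$; a Borel bijection $2^\omega\cong\omega^\omega$ does not preserve closedness either, so it cannot rescue the closed-set representation. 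For part (2) the same $G_\delta$ embedding shows a $\mathbf{\Sigma}^1_{n+1}$ set is the projection of a $\mathbf{\Pi}^1_n$ subset of $\cal{R}\times 2^\omega$ (the class $\mathbf{\Pi}^1_n$ is preserved for $n\geq 1$), and then the hypothesis together with the projection theorem finishes the argument as you say.
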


See also Theorem IV.4.14, \cite{AT05}. We remark that one may alternatively prove that every analytic subset of $\cal{R}$ is Kastanas Ramsey using Lemma \ref{lem:kr.general.dichotomy}, and follow an argument similar to the proof of Theorem 5 of \cite{R08}.

This also allows us to extend Corollary \ref{cor:measurable.is.ramsey}:

\begin{corollary}
\label{cor:analytically.measurable.is.ramsey}
    Suppose that $(\cal{R},\leq,r)$ is a closed triple satisfying \textbf{A1}-\textbf{A4} and assume that $\cal{AR}$ is countable. If $\X \subseteq \cal{R}$ is in the smallest algebra of subsets of $\cal{R}$ containing all analytic sets, then $\X$ is Ramsey.
\end{corollary}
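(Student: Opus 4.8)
The plan is to assemble \Cref{cor:analytically.measurable.is.ramsey} directly from two pieces already in hand: \Cref{cor:measurable.is.ramsey}, which says the Kastanas Ramsey subsets of $\cal{R}$ form a $\sigma$-algebra when $(\cal{R},\leq,r)$ satisfies \textbf{A1}--\textbf{A4}, and \Cref{thm:kr.iff.r}, which identifies Kastanas Ramsey with Ramsey in this setting. The only genuinely new input needed is that \emph{every analytic subset} of $\cal{R}$ is Kastanas Ramsey; this is exactly \Cref{thm:kr.and.projection}(1), which applies because $\cal{AR}$ is countable and every \textbf{A1}--\textbf{A4} triple is in particular a \textbf{wA2}-space.

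First I would record that $\bcal{KR}$ is a $\sigma$-algebra (\Cref{cor:measurable.is.ramsey}) and that it contains every analytic set (\Cref{thm:kr.and.projection}(1), using countability of $\cal{AR}$). Since $\bcal{KR}$ is in particular an algebra of subsets of $\cal{R}$ containing all analytic sets, and the smallest such algebra $\cal{A}_0$ is by definition contained in any algebra with those two properties, we get $\cal{A}_0 \subseteq \bcal{KR}$. Hence every $\X \in \cal{A}_0$ is Kastanas Ramsey. Finally, by \Cref{thm:kr.iff.r} (which requires only \textbf{A1}--\textbf{A4}), Kastanas Ramsey is equivalent to Ramsey, so $\X$ is Ramsey. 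That completes the argument.

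There is essentially no obstacle here beyond bookkeeping: the statement is a formal consequence of the three cited results, the hypothesis ``\textbf{A1}--\textbf{A4} and $\cal{AR}$ countable'' being precisely what each of them needs. The one point worth a sentence of care is that one must invoke the \emph{algebra} (not $\sigma$-algebra) version: the smallest algebra containing the analytic sets sits inside $\bcal{KR}$ because $\bcal{KR}$ is in particular an algebra; one does not even need the full strength of \Cref{cor:measurable.is.ramsey} here, though using it is harmless. I would keep the proof to three or four sentences.

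\begin{proof}
    Since $(\cal{R},\leq,r)$ satisfies \textbf{A1}--\textbf{A4}, it is in particular a \textbf{wA2}-space, and by hypothesis $\cal{AR}$ is countable, so \Cref{thm:kr.and.projection}(1) applies: every analytic subset of $\cal{R}$ is Kastanas Ramsey. By \Cref{cor:measurable.is.ramsey}, $\bcal{KR}$ is a $\sigma$-algebra, hence in particular an algebra of subsets of $\cal{R}$ containing all analytic sets. Therefore the smallest algebra of subsets of $\cal{R}$ containing all analytic sets is contained in $\bcal{KR}$, so $\X$ is Kastanas Ramsey. By \Cref{thm:kr.iff.r}, $\X$ is Ramsey.
\end{proof}
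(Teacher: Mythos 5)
Your proof is correct and follows essentially the same route as the paper: both arguments combine the fact that the (Kastanas) Ramsey sets form a $\sigma$-algebra (\Cref{cor:measurable.is.ramsey}, itself a consequence of \Cref{prop:kr.closed.under.countable.unions} and \Cref{thm:kr.iff.r}) with the fact that every analytic set is Kastanas Ramsey, and then conclude by minimality of the algebra generated by the analytic sets. The only cosmetic difference is that you phrase the containment via $\bcal{KR}$ and convert to Ramsey at the end, while the paper works with the family of Ramsey sets throughout; these are identified by \Cref{thm:kr.iff.r}, so the arguments coincide.
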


\begin{proof}
    The set of Ramsey subsets of $\cal{R}$ is closed under complements by definition. By Proposition \ref{prop:kr.closed.under.countable.unions}, the set of Ramsey subsets of $\cal{R}$ is closed under countable intersections, so it forms a $\sigma$-algebra. By Theorem \ref{thm:analytic.is.kr} and Theorem \ref{thm:kr.iff.r}, every analytic subset of $\cal{R}$ is contained in this $\sigma$-algebra.
\end{proof}

We remark that the abstract Rosendal theorem in \cite{d20} uses a similar approach to prove that every analytic subset of $X^\omega$ of a Gowers space is strategically Ramsey, where given a Gowers space, de Rancourt constructed a second Gowers space which equips a binary sequence along with elements of $X$.

\subsection{\texorpdfstring{$\Sigma_2^1$ Well Ordering}{Sigma\_2\^1 Well Ordering}}
We dedicate this section to showing that Theorem \ref{thm:analytic.is.kr} is consistently optimal for a family of sufficiently well-behaved \textbf{wA2}-space. 

\begin{definition}
    Let $(\cal{R},\leq,r)$ be a \textbf{wA2}-space. We say that $\cal{R}$ is \emph{deep} if for all $A \in \cal{R}$, $a \in \cal{AR}\restrictedto A$ and $N < \omega$, there exists some $B \in [a,A]$ such that for all $b \in r_{\lh(a)+1}[a,B]$, $\depth_A(b) \geq N$.
\end{definition}

We shall see later in the proof of Corollary \ref{cor:V=L.gives.non.kr.set.in.examples} that all examples of \textbf{wA2}-spaces introduced in \Sec2.2, except for the singleton space (Example \ref{ex:singleton.space}) are deep. The singleton space is not deep as deepness implies that for all $A \in \cal{R}$ and $a \in \cal{AR}\restrictedto[a,A]$, $r_{\lh(a)+1}[a,A]$ is infinite. We do not know if there are any ``natural'' examples of \textbf{wA2}-spaces which are not deep.

The main theorem of this section is as follows. 

\begin{theorem}
\label{thm:sigma_1^2.wo.gives.non.kr.set}
    Let $(\cal{R},\leq,r)$ be a deep \textbf{wA2}-space, and assume that $\cal{AR}$ is countable. Suppose that there exists a $\Sigma_2^1$-good well-ordering of the reals. Then there exists a $\mathbf{\Sigma}_2^1$ subset of $\cal{R}$ which is not Kastanas Ramsey.
\end{theorem}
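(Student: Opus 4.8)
The plan is to build the non-Kastanas Ramsey set by a diagonalization along the $\Sigma_2^1$-good well-ordering $<_W$ of the reals, coding a ``bad'' element into each Ellentuck neighbourhood so that neither player can win the Kastanas game. Since $\cal{AR}$ is countable, $\cal{R}$ is Polish, and a $\Sigma_2^1$-good well-ordering gives us, for each real $z$, uniform $\Sigma_2^1$ access to the $<_W$-initial segment $\{w : w <_W z\}$ and to $\Sigma_2^1$/$\Pi_2^1$ predicates quantified over it. The key combinatorial input is \emph{deepness}: given $A \in \cal{R}$ and $a \in \cal{AR}\restrictedto A$ and $N$, we may pass to $B \in [a,A]$ all of whose one-step extensions $b \in r_{\lh(a)+1}[a,B]$ have $\depth_A(b) \geq N$. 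This is exactly what lets a single diagonal element $C$ defeat a strategy: a strategy for $\textbf{I}$ or $\textbf{II}$ in $K[a,C]$ is (essentially) a real, so it appears somewhere in $<_W$; deepness ensures there is enough ``room'' among the legal responses to steer the play away from whatever that strategy dictates while still staying a legal play of the game.

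First I would fix, via the good well-ordering, an enumeration (in order type at most the continuum, but with $\Sigma_2^1$ bookkeeping) of all relevant tasks: pairs $(a,A)$ together with candidate strategies $\sigma$ for $\textbf{I}$ in $K[a,A]$ to reach some target and candidate strategies $\tau$ for $\textbf{II}$. Since there are only countably many $(a,A)$ with $A$ ranging over $\cal{R}$ — no, $\cal{R}$ is uncountable — so instead the enumeration must run over all of $\cal{R} \times (\text{strategies})$; the good well-ordering makes the $\alpha$-th such object definable from parameters below $\alpha$. The set $\X$ to be constructed will be $\{C_\alpha : \alpha < \mathfrak{c}\}$ where $C_\alpha$ is chosen, by transfinite recursion, to be the $<_W$-least element of a suitable Ellentuck neighbourhood $[a_\alpha, B_\alpha]$ (with $B_\alpha$ obtained from deepness applied sufficiently many times) that is \emph{not} the outcome dictated by the $\alpha$-th candidate strategy when that strategy is ``challenged'' appropriately, and simultaneously is placed into $\X$ or $\X^c$ to kill that candidate. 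Concretely: if the $\alpha$-th task is a strategy $\sigma$ for $\textbf{I}$ in some $K[a,B]$ purporting to reach $\X^c$, we use deepness to find, inside the legal plays against $\sigma$, an outcome $C_\alpha \in [a,B]$ and throw it \emph{into} $\X$; if it is a strategy $\tau$ for $\textbf{II}$ purporting to reach $\X$, we find an outcome $C_\alpha$ against $\tau$ and keep it \emph{out} of $\X$. The recursion must ensure these ``into'' and ``out of'' commitments never conflict, which is where having fewer than $\mathfrak{c}$-many prior commitments at each stage (and the freedom given by deepness to avoid an initial segment of already-decided reals) is used — the standard $\Sigma_2^1$-good-wellordering trick, as in the construction of a $\Sigma_2^1$ non-measurable / non-Ramsey set.

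Next I would verify the complexity bound: because $<_W$ is $\Sigma_2^1$-good, the statement ``$C \in \X$'' can be written as ``there exists $\alpha$ (coded by a real below $<_W$) such that $C = C_\alpha$ and the recursion clause at stage $\alpha$ put $C$ into $\X$'', and unwinding the recursion — each stage refers only to reals $<_W$-below the current one, and the game-theoretic clauses (``$\sigma$ is a strategy'', ``the outcome of the play following $\sigma$ and this challenge is $C$'') are arithmetic in the relevant reals — keeps the whole thing $\Sigma_2^1$. This is the routine but slightly delicate bookkeeping step; I would model it on the standard proof that $V=L$ yields a $\Sigma_2^1$ well-ordering and hence $\Sigma_2^1$ pathological sets, here abstracted to the $\Sigma_2^1$-good hypothesis. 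Finally, I would check that $\X$ really is not Kastanas Ramsey: given any $A \in \cal{R}$ and $a \in \cal{AR}\restrictedto A$, I must show that for \emph{every} $B \in [a,A]$, neither (1) $\textbf{I}$ has a strategy in $K[a,B]$ to reach $\X^c$, nor (2) $\textbf{II}$ has a strategy in $K[a,B]$ to reach $\X$. Any such strategy is one of the tasks enumerated, say at stage $\alpha$; but at stage $\alpha$ we explicitly produced an outcome $C_\alpha \in [a,B]$ realizable as a play following that strategy, and placed $C_\alpha$ on the wrong side of $\X$, contradicting that the strategy reaches its target.

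The main obstacle, and the step I would spend the most care on, is the coordination between the diagonalization against strategies and the requirement that a \emph{single} $C_\alpha$ simultaneously (i) lies in the prescribed Ellentuck neighbourhood $[a,B]$, (ii) is the outcome of a \emph{legal} play of $K[a,B]$ consistent with the challenged strategy, and (iii) avoids the $<_W$-initial segment of reals already committed to the opposite side. Deepness is the tool that makes (i)+(ii) compatible with a prescribed avoidance in (iii): at each of the countably many rounds of the game the challenged strategy leaves (by deepness) infinitely many legal continuations of large depth, so one can thread a play whose limit is forced to be a particular element not yet used — but one has to check that ``not yet used'' is consistent with there being only $<\mathfrak{c}$ prior stages while the game has only countably many rounds, i.e. that the target element can genuinely be chosen $<_W$-minimal subject to all constraints. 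I would isolate this as a lemma: \emph{in a deep \textbf{wA2}-space, for any $[a,B]$ and any strategy $\sigma$ for one of the players in $K[a,B]$, and any set $Z \subseteq \cal{R}$ with $|Z| < \mathfrak{c}$ (in fact even $|Z|$ less than the cofinality of the continuum suffices, but $<\mathfrak{c}$ via goodness), there is an outcome of a $\sigma$-consistent play lying outside $Z$} — and then the theorem follows by feeding strategies and the running initial segments into this lemma along $<_W$.
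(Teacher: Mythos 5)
Your overall diagonalization scheme (enumerate strategies along the good well-ordering, realise for each one an outcome of a consistent play, and place that outcome on the wrong side of $\X$) is the right shape, and your use of deepness to guarantee enough legal continuations to thread a ``new'' outcome is also in the spirit of the paper. But there is a genuine gap at the very first step, and it is the step the paper spends most of its effort on: a strategy in the Kastanas game $K[a,A]$ is \emph{not} a real. Both players' moves include elements of $\cal{R}$ itself (player \textbf{I} plays $A_n \in \cal{R}$, player \textbf{II} plays $B_n \in \cal{R}$), so a strategy is a function from a set of partial states of size continuum into $\cal{R}$; there are $2^{\mathfrak{c}}$ such functions. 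They cannot be enumerated along a well-ordering of the reals, your transfinite recursion of length $\mathfrak{c}$ cannot meet all of them, the ``avoid all prior commitments'' step would eventually have to avoid more than $\mathfrak{c}$ elements of a space of size $\mathfrak{c}$, and the $\Sigma_2^1$ complexity computation (``the clause `$\sigma$ is a strategy' is arithmetic in the relevant reals'') has no meaning. Restricting attention to real-coded strategies does not help either: to refute Kastanas Ramseyness you must defeat \emph{every} strategy, however wild.

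The paper's fix, which is absent from your proposal, is to first replace $K[a,A]$ by two auxiliary games with countable move sets: a modified fusion game $Z^*[a,A]$ for \textbf{I} (all moves are elements of the countable set $\cal{AR}$) and the subasymptotic game $Y[a,A]$ for \textbf{II} (player \textbf{I} plays natural numbers $n_k$, player \textbf{II} plays $a_{k+1} \in \cal{AR}$ with $\depth_A(a_{k+1}) \geq n_k$). One then proves transfer lemmas: a winning strategy for \textbf{I} in $K[a,A]$ yields one in $Z^*[a,A]$, and a winning strategy for \textbf{II} in $K[a,A]$ yields one in $Y[a,A]$ --- the latter is exactly where deepness is used, since it lets \textbf{I} simulate the demand ``play something of depth $\geq n_k$'' by a legal move of $K[a,A]$. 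Strategies in $Z^*$ and $Y$ genuinely are reals, so the sets $S_{\textbf{I}}$, $S_{\textbf{II}}$ of triples $(a,A,\sigma)$ can be well-ordered in type $\omega_1$ with $\Sigma_2^1$ bookkeeping (note a $\Sigma_2^1$-good well-ordering forces each triple to have only countably many predecessors, which is what makes the avoidance feasible within the $\omega$ rounds of a single play --- your ``$|Z|<\mathfrak{c}$'' lemma is more than is needed and also more than a single play can deliver round-by-round). It then suffices to build a set that is not \emph{pre}-Kastanas Ramsey with respect to these reduced games; your diagonalization, essentially unchanged, completes the argument from there. Without this reduction the proof does not get off the ground.
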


See also Theorem IV.7.4 of \cite{AT05}. Observing that if $(\cal{R},\leq,r)$ is deep, then so is $(\cal{R} \times 2^\omega,\preceq,r)$, we obtain the following corollary:

\begin{corollary}
\label{cor:r.times.2^omega.has.coanalytic.non.kr.set}
    Let $(\cal{R},\leq,r)$ be a deep \textbf{wA2}-space, and assume that $\cal{AR}$ is countable. Then there exists a coanalytic subset of $\cal{R} \times 2^\omega$ which is not Kastanas Ramsey.
\end{corollary}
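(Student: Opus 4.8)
The plan is to obtain the coanalytic set by applying Theorem \ref{thm:sigma_1^2.wo.gives.non.kr.set} to the auxiliary space $(\cal{R}\times 2^\omega,\preceq,r)$ rather than to $\cal{R}$ itself, and then pushing the resulting set down one level of the projective hierarchy. First I would check that $(\cal{R}\times 2^\omega,\preceq,r)$ meets all the hypotheses of Theorem \ref{thm:sigma_1^2.wo.gives.non.kr.set}: it is a \textbf{wA2}-space as established above, its set $\cal{A}(\cal{R}\times 2^\omega)=\bigcup_{n<\omega}\cal{AR}_n\times 2^n$ is countable because $\cal{AR}$ is, and it is deep — here the observation preceding the corollary is used, since the $2^\omega$-coordinate plays no role in $\preceq_\fin$, so $\lh(a,p)=\lh(a)$ and $\depth_{(A,\vec{0})}(b,q)=\depth_A(b)$, whence a witness $B\in[a,A]$ to deepness of $\cal{R}$ at $(a,A,N)$ lifts to the witness $(B,\vec{0})$ for $(\cal{R}\times 2^\omega,\preceq,r)$. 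Thus, under the standing hypothesis of this subsection that there is a $\Sigma_2^1$-good well-ordering of the reals, Theorem \ref{thm:sigma_1^2.wo.gives.non.kr.set} already yields a $\mathbf{\Sigma}_2^1$ subset of $\cal{R}\times 2^\omega$ that is not Kastanas Ramsey; the substantive point is to improve ``$\mathbf{\Sigma}_2^1$'' to ``coanalytic''.

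There are two ways to do this, and I would give the first as the formal proof. The short argument is to contrapose Theorem \ref{thm:kr.and.projection}(2): were every coanalytic subset of $\cal{R}\times 2^\omega$ Kastanas Ramsey, then every $\mathbf{\Sigma}_2^1$ subset of $\cal{R}$ would be Kastanas Ramsey, directly contradicting Theorem \ref{thm:sigma_1^2.wo.gives.non.kr.set} applied to $\cal{R}$ (which is deep with $\cal{AR}$ countable); this needs nothing beyond what is in hand. The more transparent route, which also explains why the extra $2^\omega$-coordinate is exactly what is needed, is to re-run the construction behind Theorem \ref{thm:sigma_1^2.wo.gives.non.kr.set} inside $\cal{R}\times 2^\omega$: that construction is a transfinite recursion of length $\omega_1=\mathfrak{c}$ (the good well-ordering gives $\CH$) which at stage $\xi$ diagonalises against the $\xi$-th candidate strategy, deepness guaranteeing that the outcome $(B_\xi,u_\xi)$ decided at stage $\xi$ can be kept distinct from all points decided earlier. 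Working in $\cal{R}\times 2^\omega$ one may additionally use the fact that in the Kastanas game on $\cal{R}\times 2^\omega$ the $2^\omega$-coordinate of the outcome is built by Player \textbf{II} with complete freedom (since $\preceq$ ignores it), and arrange that $u_\xi$ codes a well-order of $\omega$ of order type $\xi$. Membership in the resulting set $\cal{C}$ then amounts to ``$u$ codes a well-order, and the point decided at the stage named by $u$ is $B$'': the first clause is $\mathbf{\Pi}_1^1$ and the second is Borel in $(B,u)$, so $\cal{C}$ is coanalytic. Note that the good well-ordering is used only to enumerate the strategies and secure $\CH$; it is not needed to recover the stage from an outcome, which is the step that would otherwise force a $\mathbf{\Sigma}_2^1$ definition.

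The main obstacle in the direct route is to run the recursion while meeting both demands at once: at stage $\xi$ the play against the $\xi$-th strategy must simultaneously escape the (countably-or-fewer) points already committed, for which deepness is the tool, and leave the $2^\omega$-coordinate of the outcome pinned to a prescribed code for $\xi$, for which Player \textbf{II}'s freedom on that coordinate is the tool. One must verify these do not interfere — that after the $2^\omega$-coordinate is fixed there is still enough branching in $\cal{R}$ to diagonalise — and that the set produced is genuinely $\mathbf{\Pi}_1^1$ and genuinely fails the strong form of Definition \ref{def:kastanas.ramsey}, i.e.\ that for some $(a,p)$, $(A,\vec{0})$ and for every $(B,v)\preceq(A,\vec{0})$ neither player has a winning strategy in $K[(a,p),(B,v)]$. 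Since the short argument via Theorem \ref{thm:kr.and.projection}(2) sidesteps all of this bookkeeping, I would take it as the proof proper and relegate the explicit construction to a remark.
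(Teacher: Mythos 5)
Your proposal is correct, and your ``short argument'' is the right way to formalise it. The paper offers no explicit proof of this corollary beyond the remark that $\cal{R}\times 2^\omega$ inherits deepness, so a comparison is in order. Your primary route --- contrapose Theorem \ref{thm:kr.and.projection}(2) and feed it the $\mathbf{\Sigma}_2^1$ non-Kastanas-Ramsey subset of $\cal{R}$ supplied by Theorem \ref{thm:sigma_1^2.wo.gives.non.kr.set} --- is valid and uses only results already established; note that it needs deepness of $\cal{R}$ but \emph{not} of $\cal{R}\times 2^\omega$, so your (correct) verification that the product space is deep is not actually load-bearing for the proof you give. The paper's phrasing suggests its intended derivation is closer to your second route: run the diagonalisation of Theorem \ref{thm:sigma_1^2.wo.gives.non.kr.set} inside the product space and use the free $2^\omega$-coordinate to lower the complexity to $\mathbf{\Pi}_1^1$, which is why deepness of $\cal{R}\times 2^\omega$ is highlighted there. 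Your projection argument buys the same conclusion with none of that bookkeeping, so it is the better proof to record. Two caveats. First, you are right to read the corollary as carrying the standing hypothesis of a $\Sigma_2^1$-good well-ordering: without it the statement would contradict the conjecture at the end of the paper that consistently every set is Kastanas Ramsey, and the final corollary of Section 5 also invokes this result only under that hypothesis; the omission from the statement is an oversight, and both your argument and the paper's rely on the hypothesis. Second, in your sketched direct construction the clause ``the point decided at the stage named by $u$ is $B$'' is not Borel as written --- recovering the recursion from an ordinal code still quantifies over the $\Sigma_2^1$ well-ordering, and to land in $\mathbf{\Pi}_1^1$ one must have $u$ code the entire initial segment of the recursion (the triples and strategies themselves), not merely its order type. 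Since you explicitly rest the proof on the projection argument and relegate the construction to a remark, this does not affect correctness.
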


We now furnish a proof of Theorem \ref{thm:sigma_1^2.wo.gives.non.kr.set}. We shall now introduce two related games, which serve as a ``reduction'' of the Kastanas game for each player.

\begin{definition}
\label{def:fusion.game}
    Let $(\cal{R},\leq,r)$ be a \textbf{wA2}-space. Let $A \in \cal{R}$ and $a \in \cal{AR}\restrictedto A$. The \emph{fusion game} played below $[a,A]$, denoted as $Z[a,A]$, is defined as a game played by Player \textbf{I} and \textbf{II} in the following form:
    \begin{center}
        \begin{tabular}{c|c|c}
           Turn & \textbf{I} & \textbf{II} \\
            \hline
           1 & $A_0 \in [a,A]$ & \\
            & & $a_1 \in r_{\lh(a)+1}[a,A_0]$ \\
            \hline
           2 & $A_1 \in [a_1,A_0]$ & \\
            & & $a_2 \in r_{\lh(a_1)+1}[a_1,A_1]$ \\
            \hline
           3 &  $A_2 \in [a_2,A_1]$ & \\
            & & $a_3 \in r_{\lh(a_2)+1}[a_2,A_2]$ \\
            \hline
            $\vdots$ & $\vdots$ & $\vdots$
        \end{tabular}
    \end{center}
    The outcome of this game is $\lim_{n \to \infty} a_n \in \cal{R}$. We say that \textbf{I} (resp. \textbf{II}) has a strategy in $Z[a,A]$ to \emph{reach} $\X \subseteq \cal{R}$ if it has a strategy in $Z[a,A]$ to ensure that the outcome is in $\X$. 
\end{definition}

The following lemma, which roughly states that $Z[a,A]$ is a ``reduction'' of $K[a,A]$ for \textbf{I}, is obvious.

\begin{lemma}
    Let $(\cal{R},\leq,r)$ be a \textbf{wA2}-space. For any $A \in \cal{R}$ and $a \in \cal{AR}$, if \textbf{I} has a strategy in $K[a,A]$ to reach $\X$, then \textbf{I} has a strategy in $Z[a,A]$ to reach $\X$.
\end{lemma}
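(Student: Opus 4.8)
The plan is to convert a strategy $\sigma$ for \textbf{I} in $K[a,A]$ to reach $\X$ into a strategy $\tau$ for \textbf{I} in $Z[a,A]$ to reach $\X$, by having \textbf{I}, while playing $Z[a,A]$, maintain on the side an auxiliary run of $K[a,A]$ following $\sigma$ and copy moves between the two games. The observation that makes this work is that $K[a,A]$ and $Z[a,A]$ differ only in the extra move $B_{n-1}\in[a_n,A_{n-1}]$ that \textbf{II} makes in $K[a,A]$ but not in $Z[a,A]$, and that this move only shrinks the set from which \textbf{I} must next choose: from $[a_n,A_{n-1}]$ in $Z[a,A]$ down to $[a_n,B_{n-1}]\subseteq[a_n,A_{n-1}]$ in $K[a,A]$. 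Hence any move that is legal for \textbf{I} in the auxiliary $K$-run is also legal for \textbf{I} in $Z[a,A]$, whereas the moves $a_n\in r_{\lh(a_{n-1})+1}[a_{n-1},A_{n-1}]$ of \textbf{II} are drawn from literally the same sets in the two games and can be copied verbatim.

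I would define $\tau$ by recursion on the length of the play. Put $\tau(\emptyset):=\sigma(\emptyset)=:A_0$, which doubles as \textbf{I}'s opening move in the auxiliary $K$-run. Inductively, suppose that after a partial play $A_0,a_1,A_1,\dots,a_n,A_n$ of $Z[a,A]$ with \textbf{I} following $\tau$ (in the base case $n=0$ this partial play is just $A_0$, and below we read $a_0$ as $a$), we have maintained alongside it an auxiliary partial play $A_0,a_1,B_0,A_1,\dots,a_n,B_{n-1},A_n$ of $K[a,A]$ following $\sigma$, using the same $A_i$'s and $a_i$'s and some $B_i\in[a_{i+1},A_i]$. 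When \textbf{II} next plays $a_{n+1}\in r_{\lh(a_n)+1}[a_n,A_n]$ in $Z[a,A]$, this is also a legal \textbf{II}-move at the matching position of the auxiliary $K$-play; let \textbf{I} supply there any $B_n\in[a_{n+1},A_n]$, and set $A_{n+1}:=\sigma(A_0,a_1,B_0,\dots,A_n,a_{n+1},B_n)$. Since $\sigma$ is a legal strategy, $A_{n+1}\in[a_{n+1},B_n]\subseteq[a_{n+1},A_n]$, which is precisely the legality requirement for \textbf{I}'s reply in $Z[a,A]$; so declare $\tau(A_0,a_1,\dots,A_n,a_{n+1}):=A_{n+1}$ and retain the extended auxiliary play.

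The only point beyond bookkeeping is that $[a_{n+1},A_n]\neq\emptyset$, so that \textbf{I} can pick $B_n$ above: since $a_{n+1}\in r_{\lh(a_n)+1}[a_n,A_n]\subseteq\cal{AR}\restrictedto A_n$ we have $\depth_{A_n}(a_{n+1})<\infty$, and as $A_n\in[\depth_{A_n}(a_{n+1}),A_n]$, axiom \textbf{A3}(1) yields $[a_{n+1},A_n]\neq\emptyset$. To finish, note that a completed play of $Z[a,A]$ following $\tau$, in which \textbf{II} plays $a_1,a_2,\dots$, induces by the construction a completed play of $K[a,A]$ following $\sigma$ whose \textbf{II}-moves of that type are the same $a_1,a_2,\dots$; the two plays therefore have the same outcome $\lim_{n\to\infty}a_n$, and this lies in $\X$ because $\sigma$ reaches $\X$. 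Thus $\tau$ is a strategy for \textbf{I} in $Z[a,A]$ to reach $\X$. I do not expect any real obstacle here: the only non-formal ingredient is the nonemptiness just noted, and everything else is the index-matching above — which is why the lemma is essentially immediate.
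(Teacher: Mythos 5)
Your proof is correct and is precisely the argument the paper has in mind: the paper states this lemma without proof, calling it obvious, and the intended justification is exactly your simulation in which Player \textbf{I} supplies the missing move $B_n\in[a_{n+1},A_n]$ himself and uses $[a_{n+1},B_n]\subseteq[a_{n+1},A_n]$ to transfer legality. The one point genuinely worth recording, the nonemptiness of $[a_{n+1},A_n]$ via \textbf{A3}(1), you handle correctly.
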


Similar to the game in Definition IV.7.2 of \cite{AT05}, it is possible to modify the fusion game to ensure that the set of all partial states is countable.

\begin{definition}
\label{def:countable.fusion.game}
    Let $(\cal{R},\leq,r)$ be a \textbf{wA2}-space. Let $A \in \cal{R}$ and $a \in \cal{AR}\restrictedto A$. The game $Z^*[a,A]$ is defined as a game played by Player \textbf{I} and \textbf{II} in the following form:
    \begin{enumerate}
        \item Player \textbf{I} begin by playing some $b_1^0 \in r_{\lh(a)+1}[a,A]$.

        \item Player \textbf{II} may choose to either respond with some $a_1 \in \{b \in r_{\lh(a)+1}[a,A] : b \leq_\fin b_1^0\}$, or not respond, in which case \textbf{I} plays some $b_1^1 \in r_{\lh(b_1^0)+1}[b_1^0,A]$.

        \item Repeat (2) until \textbf{II} chooses to respond with some $a_1 \in \{b \in r_{\lh(a)+1}[a,A] : b \leq_\fin b_1^k\}$ for some $k$. Then \textbf{I} responds by playing some $b_2^0 \in r_{\lh(a_1)+1}[a_1,A]$.

        \item Again, Player \textbf{II} may choose to either respond with some $a_2 \in \{b \in r_{\lh(a_1)+1}[a,A] : b \leq_\fin b_2^0\}$, or not respond, in which case \textbf{I} plays some $b_2^1 \in r_{\lh(b_2^0)+1}[b_2^0,A]$.

        \item Repeat.
    \end{enumerate}
    The outcome of this game is $\lim_{n \to \infty} a_n$. We say that \textbf{I} has a strategy in $Z^*[a,A]$ to \emph{reach} $\X \subseteq \cal{R}$ if either $\lim_{n \to \infty} a_n \notin \cal{R}$ (i.e. \textbf{II} stops playing after some finite stage), or \textbf{I} has a strategy in $Z[a,A]$ to ensure that the outcome is in $\X$. \textbf{II} has a strategy in $Z^*[a,A]$ to \emph{reach} $\X \subseteq \cal{R}$ if $\lim_{n \to \infty} a_n \in \X$.
\end{definition}

It is also easy to see that this gives us another reduction for \textbf{I}.

\begin{lemma}
    Let $(\cal{R},\leq,r)$ be a \textbf{wA2}-space. For any $A \in \cal{R}$ and $a \in \cal{AR}$, if \textbf{I} has a strategy in $Z[a,A]$ to reach $\X$, then \textbf{I} has a strategy in $Z^*[a,A]$ to reach $\X$.
\end{lemma}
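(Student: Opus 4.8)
The plan is to realise I's strategy in $Z^*[a,A]$ as an \emph{unfolding} of a fixed strategy $\sigma$ for I in $Z[a,A]$ that reaches $\X$: the single infinite moves $A_n\in[a_n,A_{n-1}]$ prescribed by $\sigma$ are replayed in $Z^*$ one finite approximation at a time. Concretely, I maintains (deterministically) a simulated play of $Z[a,A]$ following $\sigma$. Writing $a_0:=a$ and starting from $A_0:=\sigma(\emptyset)\in[a,A]$, whenever the simulated play has produced $A_0,a_1,A_1,\dots,a_n$ and I is to move in $Z^*$, I computes $A_n:=\sigma(A_0,a_1,\dots,A_{n-1},a_n)$ and plays in $Z^*$ the successive restrictions $b_{n+1}^0:=r_{\lh(a_n)+1}(A_n)$, $b_{n+1}^1:=r_{\lh(a_n)+2}(A_n)$, and so on, continuing until II grabs some $a_{n+1}\in r_{\lh(a_n)+1}[a_n,A]$ with $a_{n+1}\leq_\fin b_{n+1}^k$; at that point $a_{n+1}$ is appended to the simulated play and the construction recurses. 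Since I's reply in $Z^*$ depends only on the sequence of grabs made by II so far together with $\sigma$, this defines a strategy on every partial state of $Z^*$ reached by legal II-moves.

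There are then two routine verifications. First, legality in $Z^*$: an easy induction on $n$, using $A_0\in[a,A]$ and $A_n\in[a_n,A_{n-1}]$, shows $A_n\leq A$, hence $A_n\in[a_n,A]$; so each $b_{n+1}^j=r_{\lh(a_n)+1+j}(A_n)$ satisfies $a_n\sqsubseteq b_{n+1}^j$ and, by condition (2) of \textbf{wA2} applied to $A_n\leq A$, lies in $\cal{AR}\restrictedto A$, so that $b_{n+1}^0\in r_{\lh(a_n)+1}[a_n,A]$ and $b_{n+1}^{j+1}\in r_{\lh(b_{n+1}^j)+1}[b_{n+1}^j,A]$ --- exactly the moves $Z^*$ permits I. Second, the simulation stays legal in $Z$: if II grabs $a_{n+1}\in r_{\lh(a_n)+1}[a_n,A]$ with $a_{n+1}\leq_\fin b_{n+1}^k=r_{\lh(a_n)+1+k}(A_n)$, then $a_{n+1}\leq_\fin r_m(A_n)$ for some $m$ and $a_n\sqsubseteq a_{n+1}$, so $a_{n+1}\in r_{\lh(a_n)+1}[a_n,A_n]$, which is precisely what II is permitted to play in $Z[a,A]$ after I's move $A_n$.

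Finally I analyse the outcome. If II ever stops producing new $a_n$ --- either by declining forever at one level or by not responding --- then only finitely many $a_n$ are played, so $\lim_n a_n\notin\cal{R}$ and I wins $Z^*$ by definition. Otherwise II grabs infinitely often, and by construction $A_0,a_1,A_1,a_2,\dots$ is a complete run of $Z[a,A]$ following $\sigma$; hence its outcome $\lim_n a_n$ lies in $\X$, and this is exactly the outcome of the $Z^*$-play. There is no genuine obstacle here; the only point requiring a little care is that the $Z^*$-strategy be well-defined on all partial states, which is handled by the observation that the simulated $Z$-play is recovered deterministically from the sequence of II's grabs together with $\sigma$.
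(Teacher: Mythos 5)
Your argument is correct and is precisely the simulation the paper has in mind (the lemma is stated there without proof as "easy to see"): Player \textbf{I} unfolds each $Z$-move $A_n=\sigma(\dots)$ into the increasing finite approximations $b_{n+1}^j=r_{\lh(a_n)+1+j}(A_n)$, any grab $a_{n+1}\leq_\fin r_{\lh(a_n)+1+k}(A_n)$ by \textbf{II} is then a legal reply in $Z[a,A]$ by the definition of $\cal{AR}\restrictedto[a_n,A_n]$, and the degenerate case where \textbf{II} stops grabbing is won by \textbf{I} by fiat in the definition of $Z^*$. All the legality checks (via $A_n\leq A$ and \textbf{wA2}(2)) are right, so there is nothing to add.
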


We shall now introduce the reduction of $K[a,A]$ for \textbf{II}. 

\begin{definition}
\label{def:reverse.fusion.game}
    Let $(\cal{R},\leq,r)$ be a \textbf{wA2}-space. The \emph{subasymptotic game} played below $[a,A]$, denoted as $Y[a,A]$, is defined as a game played by Player \textbf{I} and \textbf{II} in the following form:
    \begin{center}
        \begin{tabular}{c|c|c}
           Turn & \textbf{I} & \textbf{II} \\
            \hline
           1 & $n_0 < \omega$ & \\
            & & $a_1 \in r_{\lh(a)+1}[a,A]$ s.t. $\depth_A(a_1) \geq n_0$ \\
            \hline
           2 & $n_1 < \omega$ & \\
            & & $a_2 \in r_{\lh(a_1)+1}[a_1,A]$ s.t. $\depth_A(a_2) \geq n_1$ \\
            \hline
           3 &  $n_2 < \omega$ & \\
            & & $a_3 \in r_{\lh(a_2)+1}[a_2,A]$ s.t. $\depth_A(a_3) \geq n_2$ \\
            \hline
            $\vdots$ & $\vdots$ & $\vdots$
        \end{tabular}
    \end{center}
    The outcome of this game is $\lim_{n \to \infty} a_n \in \cal{R}$. We say that \textbf{I} (resp. \textbf{II}) has a strategy in $Y[a,A]$ to \emph{reach} $\X \subseteq \cal{R}$ if it has a strategy in $Y[a,A]$ to ensure that the outcome is in $\X$. 
\end{definition}

\begin{lemma}
    Let $(\cal{R},\leq,r)$ be a deep \textbf{wA2}-space. For any $A \in \cal{R}$ and $a \in \cal{AR}$, if \textbf{II} has a strategy in $K[a,A]$ to reach $\X$, then \textbf{II} has a strategy in $Y[a,A]$ to reach $\X$.
\end{lemma}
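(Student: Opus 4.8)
The plan is to have Player \textbf{II}, while openly playing the subasymptotic game $Y[a,A]$, privately run a play of the Kastanas game $K[a,A]$ that follows a fixed strategy $\sigma$ for \textbf{II} to reach $\X$. In the private $K$-play, \textbf{II} takes on the role of \textbf{I} against $\sigma$, and \textbf{II}'s own moves in $Y$ will be exactly the approximations $a_n$ that $\sigma$ produces. Everything comes down to translating the numbers $n_i$ demanded by the real opponent in $Y$ into \textbf{I}-moves in the private $K$-play that are ``sparse enough'' to force every reply of $\sigma$ to be deep relative to $A$; this is where deepness is used.

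In detail, I would maintain in parallel a partial play of $Y[a,A]$ and a partial play of $K[a,A]$ following $\sigma$, sharing the same sequence of approximations $a = a_0, a_1, a_2, \dots$; write $A_i$ for the (simulated) move of \textbf{I} and $B_i$ for the move of \textbf{II} in the private $K$-play. When the opponent plays $n_0$ in $Y$, I apply deepness to $(A,a,n_0)$ to obtain $A_0 \in [a,A]$ with $\depth_A(b) \geq n_0$ for every $b \in r_{\lh(a)+1}[a,A_0]$, let $(a_1,B_0) := \sigma(A_0)$, and have \textbf{II} answer $a_1$ in $Y$ --- legal since $a_1 \in r_{\lh(a)+1}[a,A_0] \subseteq r_{\lh(a)+1}[a,A]$ and $\depth_A(a_1) \geq n_0$. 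Inductively, suppose the private $K$-play has reached \textbf{II}'s move $(a_i,B_{i-1})$; when the opponent plays $n_i$ in $Y$, I apply deepness to $(B_{i-1},a_i,n_i)$ to obtain $A_i \in [a_i,B_{i-1}]$ with $\depth_{B_{i-1}}(b) \geq n_i$ for all $b \in r_{\lh(a_i)+1}[a_i,A_i]$; appending $A_i$ to the private $K$-history and consulting $\sigma$, I let $(a_{i+1},B_i)$ be its reply and have \textbf{II} answer $a_{i+1}$ in $Y$. Since $A_i \in [a_i,B_{i-1}]$ is a legal \textbf{I}-move in $K[a,A]$ and $A_i \leq A$, we get $a_{i+1} \in r_{\lh(a_i)+1}[a_i,A]$, and --- using the monotonicity $\depth_{B_{i-1}}(c) \leq \depth_A(c)$ valid whenever $B_{i-1} \leq A$ --- also $\depth_A(a_{i+1}) \geq n_i$, so \textbf{II}'s answer is legal. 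As $Y[a,A]$ lasts infinitely many rounds, the private $K$-play built this way is a complete play following $\sigma$, hence its outcome $\lim_{n \to \infty} a_n$ lies in $\X$; this is equally the outcome of the $Y$-play, so the recipe above is a strategy for \textbf{II} in $Y[a,A]$ to reach $\X$.

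The one genuinely non-routine point is the inductive step: passing from ``$b$ is deep relative to the current local restriction $B_{i-1}$'' --- which is exactly what deepness delivers when applied below $B_{i-1}$, and which also keeps the simulated \textbf{I}-move $A_i$ legal --- to ``$b$ is deep relative to the fixed ambient $A$'', which is what the rules of $Y[a,A]$ require. This rests on the monotonicity of the depth function along $\leq$, namely $\depth_C(c) \leq \depth_A(c)$ whenever $C \leq A$, which one checks by chasing a witness $c \leq_\fin r_m(C)$ through the axioms (wA2(2) together with \textbf{A3}); I expect this small monotonicity lemma to be the main thing to nail down carefully, everything else (legality of each simulated \textbf{I}-move and of each $\sigma$-reply, and the bookkeeping that the two plays share the same $a_n$-sequence and the same outcome) being immediate.
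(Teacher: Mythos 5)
Your simulation is essentially the paper's own proof: the paper's (very terse) argument likewise has \textbf{II} privately run the Kastanas game against $\sigma$, converting each demand $n_i$ into a simulated \textbf{I}-move $A_i \in [a_i,B_{i-1}]$ all of whose one-step extensions are $\depth_A$-deep, so that $\sigma$'s reply $a_{i+1}$ is a legal answer in $Y[a,A]$. The one place you go beyond the paper is in making explicit the base change from $\depth_{B_{i-1}}$ (which is what deepness applied below $B_{i-1}$ literally provides) to $\depth_A$ (which is what the rules of $Y[a,A]$ demand); the paper simply asserts that \textbf{I} can choose $A_i \in [a_i,B_{i-1}]$ with the $\depth_A$-property directly. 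Do be careful with the monotonicity lemma you invoke: as stated, $\depth_C(c) \leq \depth_A(c)$ for $C \leq A$ is false without the side condition $c \in \cal{AR}\restrictedto C$ (otherwise $\depth_C(c) = \infty$ is possible), and even with that condition it does not fall out of \textbf{wA2}(2) and \textbf{A3} by a routine witness chase --- \textbf{wA2}(2) only bounds the $A$-depths of initial segments of $C$ from above, not from below. The monotonicity does hold in all of the paper's examples, and the paper's proof tacitly relies on the same fact, so this is a shared elision rather than a defect peculiar to your argument; but to make the proof airtight you should either verify the monotonicity for the spaces in question or strengthen the definition of deepness so that it yields $\depth_A$-deep restrictions below an arbitrary $B \leq A$.
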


\begin{proof}
    We first note that since $\cal{R}$ is deep, it is always possible for \textbf{II} to respond with a legal move in $Y[a,A]$. The deepness of $\cal{R}$ also allows us to view $Y[a,A]$ as a ``special case'' of $K[a,A]$: In $K[a,A]$, if \textbf{II} responded with $(a_k,B_{k-1})$, and \textbf{I} wants to restrict the next response by \textbf{II} such that $\depth_A(a_{k+1}) \geq n_k$ for some $n_k < \omega$, then \textbf{I} can respond to $(a_k,B_{k-1})$ by playing any $A_k \in [a_k,B_{k-1}]$ such that for all $b \in r_{\lh(a_k)+1}[a_k,A_k]$, $\depth_A(b) \geq n_k$. Therefore, a strategy for \textbf{II} in $K[a,A]$ to reach $\X$ may be passed to a strategy for \textbf{II} in $Y[a,A]$ to reach $\X$.
\end{proof}

\begin{definition}
    Let $(\cal{R},\leq,r)$ be a \textbf{wA2}-space. A set $\X \subseteq \cal{R}$ is \emph{pre-Kastanas Ramsey} if for all $A \in \cal{R}$ and $a \in \cal{AR}\restrictedto A$, there exists some $B \in [a,A]$ such that one of the following holds:
    \begin{enumerate}
        \item \textbf{I} has a strategy in $Z^*[a,B]$ to reach $\X^c$.

        \item \textbf{II} has a strategy in $Y[a,B]$ to reach $\X$.
    \end{enumerate}
\end{definition}

It is clear from the definition that every Kastanas Ramsey set is pre-Kastanas Ramsey. 

\begin{proof}[Proof of Theorem \ref{thm:sigma_1^2.wo.gives.non.kr.set}]
    It suffices to construct a $\mathbf{\Sigma}_2^1$ subset of $\cal{R}$ which is not pre-Kastanas Ramsey. Define the following two sets:
    \begin{align*}
        S_\textbf{I} &:= \{(a,A,\sigma) : a \in \cal{AR}\restrictedto A \wedge \text{$\sigma$ is a strategy for \textbf{I} in $Z^*[a,A]$}\}, \\
        S_\textbf{II} &:= \{(a,A,\sigma) : a \in \cal{AR}\restrictedto A \wedge \text{$\sigma$ is a strategy for \textbf{II} in $Y[a,A]$}\}.
    \end{align*}
    Note that a strategy in a game is a function from the set of partial states of the game to a play of the game. Since $\cal{AR}$ is countable, the sets of partial states of $Z^*[a,A]$ and of $Y[a,A]$ are also countable, and in both games all players play from a countable set. Therefore, we may naturally embed both sets $S_\textbf{I}$ and $S_\textbf{II}$ to the reals, giving us a $\Sigma_2^1$-well ordering $\prec_s$ of $S_\textbf{I}$ and $S_\textbf{II}$. Note that $\prec_s$ is of order-type $\omega_1$, so every triple in $S_\textbf{I} \cup S_\textbf{II}$ has countably many $\prec_s$-predecessors.
    
    Given a triple $(a,A,\sigma) \in S_\textbf{I} \cup S_\textbf{II}$, we build $B_{a,A,\sigma} \in \cal{R}$ by an increasing sequence $a = b_0 \sqsubseteq b_1 \sqsubseteq \cdots$ with $\lh(b_n) = \lh(a) + n$, and let $B_{a,A,\sigma} := \lim_{n\to\infty} b_n$. We consider two cases.
    \begin{enumerate}
        \item If $(a,A,\sigma) \in S_\textbf{I}$, then we shall construct some $B_{a,A,\sigma} \in \cal{R}$ such that $B_{a,A,\sigma}$ is the outcome of some full play in $Z^*[a,A]$, with \textbf{I} following $\sigma$, and that $B_{a,A,\sigma} \neq B_{a',A',\sigma'}$ for all $(a',A',\sigma') \prec_s (a,A,\sigma)$. We do this as follows: We first enumerate the $\prec_s$-predecessors by $\{(a_n,A_n,\sigma_n) : n < \omega\}$. Following a play in $Z^*[a,A]$ where \textbf{I} follows $\sigma$, suppose that \textbf{I} started the $n^\text{th}$ turn with $B_n \in [b_n,A]$. If $b_n \neq r_{\lh(a)+n}(B_{a_n,A_n,\sigma_n})$ or $B_{a_n,A_n,\sigma_n} \not\leq A$, then pick any $b_{n+1} \in r_{\lh(a)+n+1}[b_n,B_n]$. Otherwise, since $\cal{R}$ is deep we may pick some $b_{n+1} \in r_{\lh(a)+n+1}[b_n,B_n]$ such that $\depth_A(b_{n+1}) > \depth_A(r_{\lh(a)+n+1}(B_{a_n,A_n,\sigma_n}))$. This construction ensures that $B_{a,A,\sigma} \neq B_{a_n,A_n,\sigma_n}$ for all $n < \omega$.

        \item If $(a,A,\sigma) \in S_\textbf{II}$, then we shall construct some $B_{a,A,\sigma} \in \cal{R}$ such that $B_{a,A,\sigma}$ is the outcome of some full play in $Y[a,A]$, with \textbf{II} following $\sigma$, and that $B_{a,A,\sigma} \neq B_{a',A',\sigma'}$ for all $(a',A',\sigma') \prec_s (a,A,\sigma)$. We do this as follows: Again, we enumerate the $\prec_s$-predecessors by $\{(a_n,A_n,\sigma_n) : n < \omega\}$. Following a play in $Y[a,A]$ where \textbf{II} follows $\sigma$, suppose that the sequence $b_n$ has been played so far. If $B_{a_n,A_n,\sigma_n} \leq A$, we then ask that \textbf{I} respond with $\depth_A(r_{\lh(a)+n+1}(B_{a_n,A_n,\sigma_n})) + 1$, so that for any $b_{n+1}$ that \textbf{II} respond with next, $b_{n+1} \neq r_{\lh(a)+n+1}(B_{a_n,A_n,\sigma_n}))$. Otherwise, \textbf{I} may respond with any $k < \omega$. This construction ensures that $B_{a,A,\sigma} \neq B_{a_n,A_n,\sigma_n}$ for all $n < \omega$.
    \end{enumerate}
    Now let:
    \begin{align*}
        \X := \{B_{a,A,\sigma} : (a,A,\sigma) \in S_\textbf{I}\}.
    \end{align*}
    $\X$ is $\mathbf{\Sigma}_2^1$, as the well-ordering $\prec_s$ is $\Sigma_2^1$ and the construction of $\X$ is natural from $\prec_s$. We then see that $\X$ is not pre-Kastanas Ramsey: Let $A \in \cal{R}$ and $a \in \cal{AR}$.
    \begin{enumerate}
        \item If $\sigma$ is a strategy for \textbf{I} in $Z^*[a,A]$, then $B_{a,A,\sigma}$ is the outcome of a run following $\sigma$ such that $B_{a,A,\sigma} \notin \X^c$, so $\sigma$ is not a winning strategy for \textbf{I}.

        \item If $\sigma$ is a strategy for \textbf{II} in $Y[a,A]$, then $B_{a,A,\sigma}$ is the outcome of a run following $\sigma$ such that $B_{a,A,\sigma} \notin \X$, so $\sigma$ is not a winning strategy for \textbf{II}.
    \end{enumerate}
    This completes the proof.
\end{proof}

We remark that the set $\{B_{a,A,\sigma} : (a,A,\sigma) \in S_\textbf{I}\}$ would similarly produce a $\mathbf{\Pi}_2^1$ subset of $\cal{R}$ that is not pre-Kastanas Ramsey. Since such a well-ordering exists in G\"{o}del's constructible universe, we may conclude that:

\begin{corollary}[$\mathsf{V=L}$]
\label{cor:V=L.gives.non.kr.set}
    Let $(\cal{R},\leq,r)$ be a deep \textbf{wA2}-space, and assume that $\cal{AR}$ is countable. Then there exists a $\mathbf{\Sigma}_2^1$ subset of $\cal{R}$ which is not Kastanas Ramsey.
\end{corollary}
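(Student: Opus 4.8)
The statement to prove is Corollary~\ref{cor:V=L.gives.non.kr.set}, which under the assumption $\mathsf{V=L}$ asserts the existence of a $\mathbf{\Sigma}_2^1$ subset of $\cal{R}$ which is not Kastanas Ramsey, for any deep \textbf{wA2}-space $(\cal{R},\leq,r)$ with $\cal{AR}$ countable.

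\medskip

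The plan is simply to invoke Theorem~\ref{thm:sigma_1^2.wo.gives.non.kr.set} after verifying that its hypothesis holds in $\L$. First I would recall that Theorem~\ref{thm:sigma_1^2.wo.gives.non.kr.set} states: if $(\cal{R},\leq,r)$ is a deep \textbf{wA2}-space with $\cal{AR}$ countable, and there exists a $\Sigma_2^1$-good well-ordering of the reals, then $\cal{R}$ has a $\mathbf{\Sigma}_2^1$ subset which is not Kastanas Ramsey. So the entire content of the corollary is the observation that $\mathsf{V=L}$ supplies such a well-ordering. This is the classical fact, due to G\"{o}del, that in $\L$ the canonical well-ordering $<_\L$ of the constructible reals is $\Sigma_2^1$ (indeed $\Sigma_2^1$-good in the sense required, i.e.\ the relevant initial-segment and coding relations are uniformly $\Sigma_2^1$), a statement I would cite rather than reprove. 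Then the hypothesis of Theorem~\ref{thm:sigma_1^2.wo.gives.non.kr.set} is met, and the conclusion follows verbatim.

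\medskip

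Concretely, the write-up would be two sentences: (1) Under $\mathsf{V=L}$, the canonical well-ordering $<_\L$ of $\R$ is $\Sigma_2^1$-good (cite a standard reference, e.g.\ Kanamori or Moschovakis); (2) applying Theorem~\ref{thm:sigma_1^2.wo.gives.non.kr.set} with this well-ordering yields the desired $\mathbf{\Sigma}_2^1$ non-Kastanas-Ramsey set. One could additionally remark, as the preceding paragraph in the excerpt already does, that the same construction run on $S_\textbf{II}$ (or taking complements) gives a $\mathbf{\Pi}_2^1$ example, but strictly speaking the corollary only asks for the $\mathbf{\Sigma}_2^1$ case.

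\medskip

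There is essentially no obstacle here: the corollary is a direct specialisation of the theorem to a well-known model. The only thing requiring care — and it is routine — is making sure that the notion of ``$\Sigma_2^1$-good well-ordering'' used as the hypothesis of Theorem~\ref{thm:sigma_1^2.wo.gives.non.kr.set} matches exactly the properties of $<_\L$ that one cites from the literature; since the proof of Theorem~\ref{thm:sigma_1^2.wo.gives.non.kr.set} only used that the well-ordering is $\Sigma_2^1$, has order-type $\omega_1$ (so that every element has countably many predecessors), and that the construction of $B_{a,A,\sigma}$ from the predecessors is ``natural'' (i.e.\ absolute/definable enough to keep $\X$ at the $\mathbf{\Sigma}_2^1$ level), all of which $<_\L$ satisfies, no gap arises.
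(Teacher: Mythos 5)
Your proposal is correct and matches the paper exactly: the paper derives this corollary in one line from Theorem \ref{thm:sigma_1^2.wo.gives.non.kr.set} together with the classical fact that in G\"{o}del's constructible universe there is a $\Sigma_2^1$-good well-ordering of the reals. Your additional care about matching the definition of ``$\Sigma_2^1$-good'' to the properties actually used in the theorem's proof is reasonable but not something the paper spells out.
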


\begin{corollary}[$\mathsf{V=L}$]
\label{cor:V=L.gives.non.kr.set.in.examples}
    The following \textbf{wA2}-spaces have a $\mathbf{\Sigma}_2^1$ subset which is not Kastanas Ramsey:
    \begin{enumerate}
        \item $([\N]^\infty,\subseteq,r)$.

        \item $(\FIN_k^{[\infty]},\leq,r)$, the topological Ramsey space of infinite block sequences.
        
        \item $(\FIN_{\pm k}^{[\infty]},\leq,r)$, a variant of the space of infinite block sequences.

        \item $(W_{Lv}^{[\infty]},\leq,r)$, the Hales-Jewett space.

        \item $(\mathcal{S}^\infty,\subseteq,r)$, the topological Ramsey space of strong subtrees.

        \item $(\E^\infty,\leq,r)$, the Carlson-Simpson space.

        \item $(E^{[\infty]},\leq,r)$, the space of infinite-dimensional block subspaces of a countable vector space.
    \end{enumerate}
\end{corollary}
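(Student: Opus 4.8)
The final statement to prove is Corollary~\ref{cor:V=L.gives.non.kr.set.in.examples}, which asserts that each of the seven listed \textbf{wA2}-spaces admits a $\mathbf{\Sigma}_2^1$ subset that is not Kastanas Ramsey. My plan is to reduce this directly to Corollary~\ref{cor:V=L.gives.non.kr.set}, whose hypotheses are that the space is a deep \textbf{wA2}-space with $\cal{AR}$ countable. Since we work under $\mathsf{V=L}$, there is a $\Sigma_2^1$-good well-ordering of the reals, so the only two things that need checking for each of the seven examples are (i) that $\cal{AR}$ is countable, and (ii) that the space is deep.

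\begin{proof}
Since we assume $\mathsf{V=L}$, there exists a $\Sigma_2^1$-good well-ordering of the reals, and hence by Theorem~\ref{thm:sigma_1^2.wo.gives.non.kr.set} (equivalently Corollary~\ref{cor:V=L.gives.non.kr.set}), any deep \textbf{wA2}-space whose family of approximations $\cal{AR}$ is countable admits a $\mathbf{\Sigma}_2^1$ subset which is not Kastanas Ramsey. It therefore suffices to verify, for each of the seven spaces listed, that $\cal{AR}$ is countable and that the space is deep.

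Countability of $\cal{AR}$ is immediate in each case: for $([\N]^\infty,\subseteq,r)$ we have $\cal{AR} = [\N]^{<\infty}$; for $(\FIN_k^{[\infty]},\leq,r)$ and $(\FIN_{\pm k}^{[\infty]},\leq,r)$ we have $\cal{AR} = \FIN_k^{[<\infty]}$ resp. $\FIN_{\pm k}^{[<\infty]}$, each a countable set of finite sequences drawn from the countable sets $\FIN_k$, $\FIN_{\pm k}$; for $(W_{Lv}^{[\infty]},\leq,r)$ we have $\cal{AR} = W_{Lv}^{[<\infty]}$, finite sequences of variable-words over the countable alphabet $L \cup \{v\}$; for $(\S^\infty,\subseteq,r)$ we have $\cal{AR} = \S_{<\infty}$, the strong subtrees of finite height, each a finite subset of $\omega^{<\omega}$; for $(\E^\infty,\leq,r)$ we have $\cal{AR} = \cal{AE}_\infty$, equivalence relations on finite initial segments of $\N$; and for $(E^{[\infty]},\leq,r)$ we have $\cal{AR} = E^{[<\infty]}$, finite $<$-increasing sequences of nonzero vectors in the countable space $E$. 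In every case $\cal{AR}$ is a countable set, so $\cal{R}$ is Polish under the metrisable topology.

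It remains to check deepness. Fix $A \in \cal{R}$, $a \in \cal{AR}\restrictedto A$, and $N < \omega$; we must produce $B \in [a,A]$ such that $\depth_A(b) \geq N$ for every $b \in r_{\lh(a)+1}[a,B]$. The key observation, which I would record once and then apply uniformly, is that in each of these spaces the ``one-step extensions'' of $a$ inside a given $C \leq A$ correspond bijectively to choices from $\c{r_k(C)}$ (or the analogous set of combinatorial data) for $k$ large, and the depth of such a one-step extension grows with $k$; so by passing from $A$ to the condition $B := [\depth_A(a),A]$-element obtained by ``thinning out'' the first block of $A$ past level $N$—concretely, letting $B \in [a,A]$ be such that $r_{\lh(a)+1}(B)$ is built from $r_m(A)$ for some $m \geq N$—every one-step extension of $a$ in $B$ is $\leq_\fin r_m(A)$ with $m \geq N$, hence has $\depth_A$ at least $N$. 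This argument is carried out the same way in each example using the explicit description of $r$ and $\leq_\fin$ given in \Sec2.2: in $[\N]^\infty$, take $B \leq A$ omitting the first $N$ elements of $A$; in $\FIN_k^{[\infty]}$, $\FIN_{\pm k}^{[\infty]}$, $W_{Lv}^{[\infty]}$ and $E^{[\infty]}$, take $B$ obtained from $A$ by deleting the first $N$ terms of the block sequence beyond $a$; in $\S^\infty$ take a strong subtree $B \leq A$ whose first splitting level beyond $a$ lies above the $N$-th splitting level of $A$; and in $\E^\infty$ take a coarsening $B \leq A$ whose first new equivalence class beyond $a$ has minimal representative $\geq p_N(A)$. (By contrast the singleton space fails deepness since there $r_{\lh(a)+1}[a,A]$ is a singleton, which is why it is excluded from the list.) The main obstacle is purely bookkeeping: one must check in each of the six genuinely infinite examples that the ``thinned'' $B$ so constructed really lies in $[a,A]$ and that the depth bound transfers correctly through the definition of $\leq_\fin$ in that space; none of these verifications is difficult, as each reduces to the defining combinatorial theorem (Ellentuck, Gowers' $\FIN_k$, Hales--Jewett, Halpern--L\"{a}uchli, Carlson--Simpson, Hindman) already invoked in \Sec2.2 to establish that the triple is a closed triple satisfying the axioms. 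With deepness established, Corollary~\ref{cor:V=L.gives.non.kr.set} applies to each space, completing the proof.
\end{proof}
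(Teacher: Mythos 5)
Your proposal is correct and follows essentially the same route as the paper: reduce to Corollary~\ref{cor:V=L.gives.non.kr.set} via the $\Sigma_2^1$-good well-ordering supplied by $\mathsf{V=L}$, and verify deepness of each space by the same ``thin out past level $N$'' constructions the paper uses. The only cosmetic quibble is that the deepness checks are elementary and do not actually rest on the pigeonhole theorems (Gowers, Hales--Jewett, Halpern--L\"{a}uchli, Hindman) you cite at the end; those are needed for \textbf{A4}, not for deepness.
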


\begin{proof}
    By Corollary \ref{cor:V=L.gives.non.kr.set}, it suffices to show that every \textbf{wA2}-space above is deep.
    \begin{enumerate}
        \item Let $A = \{n_0,n_1,\dots\} \in [\N]^\infty$, and let $a \subseteq A$ be finite. For all $N$ such that $n_N > \max(a)$, we have that $\depth_A(a \cup \{n_{N-1}\}) = N$. Therefore, $a \cup \{n_{N-1}\} \in r_{\lh(a)+1}[a,A]$ and $\depth_A(a \cup \{n_{N-1}\}) \geq N$.
        
        \item Let $A = (x_0,x_1,\dots) \in \FIN_k^{[\infty]}$, and let $a \in \FIN_k^{[<\infty]}\restrictedto A$. For all $N$ such that $x_N > a$, we have that $\depth_A(a^\frown x_{N-1}) = N$. Therefore, $a^\frown x_{N-1} \in r_{\lh(a)+1}[a,A]$ and $\depth_A(a^\frown x_{N-1}) \geq N$.
        
        \item The proof is identical to that of $(\FIN_k^{[\infty]},\leq,r)$.
        
        \item Let $A = (x_0,x_1,\dots) \in W_{Lv}^{[\infty]}$, and let $a = (y_i)_{i<n} \in W_{Lv}^{[<\infty]}\restrictedto A$. Since $A$ is rapidly increasing, for $N$ large enough we have that $\sum_{i<n} |y_i| < |x_N|$. Therefore, $a^\frown x_N \in r_{\lh(a)+1}[a,A]$ and $\depth_A(a^\frown x_N) \geq N$.
        
        \item Let $A \subseteq T$ be a strong subtree. Given any $a \in \S_{<\infty}\restrictedto A$, we let $S_a$ be the set of terminal nodes in $a$. Since $a$ is a strong subtree, $S_a \subseteq \func{split}_N(A)$ for some $N$. We observe that:
        \begin{align*}
            \depth_A(a) = N \iff S_a \subseteq \func{split}_N(A).
        \end{align*}
        Fix any $a \in \S_{<\infty}\restrictedto A$ and $N < \omega$ be large enough. For each $s \in S_a$, let $t_{s,0},t_{s,1} \in \func{split}_N(A)$ be such that $s \sqsubseteq t_{s,0}$, $s \sqsubseteq t_{s,1}$ and $t_{s,0} \neq t_{s,1}$. We let:
        \begin{align*}
            b := \{u \in A : u \sqsubseteq t_{s,i} \text{ for some $s \in S_a$ and $i \in \{0,1\}$}\}.
        \end{align*}
        Observe that $a$ is an initial segment of $b$, and every terminal node in $a$ splits in $b$. Thus, $b \in r_{\lh(a)+1}[a,A]$ and $\depth_A(b) = N$.

        \item Let $A \in \E_\infty$, and let $a \in \cal{AE}_\infty\restrictedto A$. Let $\{p_n(A) : n < \omega\}$ be the increasing enumeration of the minimal representatives of $A$. Given $N > m := \depth_A(a)$, we define the equivalence relation $b$ on $\{0,1,\dots,p_N(A)-1\}$ as follows: Given $i,j \in \N$, we define:
        \begin{small}
            \begin{gather*}
                (i,j) \in b \iff
                \begin{cases}
                    (i,j) \in A \text{ and } (i \in \dom(a) \text{ or } j \in \dom(a)), \text{ or}; \\
                    (i \notin \dom(a) \text{ and } j \notin \dom(a)).
                \end{cases}
            \end{gather*}
        \end{small}
        We shall show that $b \in r_{\lh(a)+1}[a,A]$ and $\depth_A(b) = N$.
        
        Given $i,j < p_N(A)$ such that $(i,j) \in A$, if $i \in \dom(a)$ or $j \in \dom(a)$ then $(i,j) \in b$. Otherwise, $(i,j) \in b$ as well. Therefore, $b$ is an equivalence relation on $\dom(r_N(A))$ which is coarser than $A$, so $b \leq_\fin r_N(A)$. To see that $a \sqsubseteq b$ - if $i,j \in \dom(a)$ and $(i,j) \in b$, then $(i,j) \in A$, so $(i,j) \in a$ as $a$ is coarser than $A$. Finally, the equivalence classes in $b$ are either of the form $[i]_b$ for some $i \in \dom(a)$ (of which $(i,j) \notin a$ implies that $[i]_b \neq [j]_b$), or $[i]_b$ for any $i \notin \dom(a)$ (of which $[i]_b = [j]_b$ for all $i,j \notin \dom(a)$). Therefore, $b$ has $\lh(a) + 1$ many equivalence classes, i.e. $b \in r_{\lh(a)+1}[a,A]$.

        \item The proof is identical to that of $(\FIN_k^{[\infty]},\leq,r)$.
    \end{enumerate}
\end{proof}

\section{Strategically Ramsey Sets and Gowers Spaces}
\subsection{Gowers Spaces} 
de Rancourt first introduced Gowers spaces in \cite{d20} as a common abstraction to the topological Ramsey space $[\N]^\infty$ (i.e. the Ellentuck space, or Mathias-Silver space) and countable vector spaces (i.e. Rosendal space). We recall the definition.

\begin{definition}[Definition 2.1, \cite{d20}]
\label{def:gowers.space}
    A \emph{Gowers space} is a quintuple $(P,X,\leq,\leq^*,\lhd)$, where $P \neq \emptyset$ is the set of \emph{subspaces}, $X \neq \emptyset$ is at most countable (the set of \emph{points}), $\leq,\leq^*$ are two quasi-orders on $P$, and $\lhd \subseteq X^{<\omega} \times P$ is a binary relation, satisfying the following properties:
    \begin{enumerate}
        \item For all $p,q \in P$, if $p \leq q$, then $p \leq^* q$.

        \item For all $p,q \in P$, if $p \leq^* q$, then there exists some $r \in P$ such that $r \leq p$, $r \leq q$ and $p \leq^* r$.

        \item For every $\leq$-decreasing sequence $(p_n)_{n<\omega}$ of $P$, there exists some $p^* \in P$ such that $p^* \leq^* p_n$ for all $n < \omega$.

        \item For all $p \in P$ and $s \in X^{<\omega}$, there exists some $x \in X$ such that $s^\frown x \lhd p$.

        \item For all $s \in X^{<\omega}$ and $p,q \in P$, if $s \lhd p$ and $p \leq q$, then $s \lhd q$.
    \end{enumerate}
    Given $p,q \in P$, we also write $p \lessapprox q$ iff $p \leq q$ and $q \leq^* p$.
\end{definition}

de Rancourt proceeded to introduce various games in this abstract setting. We hereby provide a summary of the games we're interested in. Note that we have employed some changes in the names/notations of the game.

\begin{definition}[Definition 2.2, \cite{d20}]
    For each $p \in P$, the \emph{adversarial Gowers game} $AG(p)$ is defined as a game played by Player \textbf{I} and \textbf{II} in the following form:
    \begin{center}
        \begin{tabular}{c|cccccc}
            \textbf{I} & & $x_0,q_0$ & & $x_1,q_1$ & & $\cdots$ \\
            \hline
            \textbf{II} & $p_0$ & & $y_0,p_1$ & & $y_1,p_2$ & $\cdots$ \\
        \end{tabular}
    \end{center}
    such that $x_n,y_n \in X$ and $p_n,q_n \in P$ for all $n$, and that the following additional condition must be fulfilled for all $n < \omega$:
    \begin{enumerate}
        \item $(x_0,y_0,\dots,x_{n-1},y_{n-1},x_n) \lhd p_n$.

        \item $(x_0,y_0,\dots,x_n,y_n) \lhd q_n$.

        \item $p_n \leq p$ and $q_n \leq p$.
    \end{enumerate}
    The outcome of this game is $(x_0,y_0,x_1,y_1,\dots)$. We say that \textbf{I} (resp. \textbf{II}) has a strategy in $K(p)$ to \emph{reach} $\X \subseteq X^\omega$ if it has a strategy in $K(p)$ to ensure that the outcome is in $\X$. 
\end{definition}

\begin{definition}[Definition 2.2, \cite{d20}]
    For each $p \in P$, the \emph{adversarial Gowers game for \textbf{I}} $AG_\textbf{I}(p)$ (resp. \emph{for \textbf{II}} $AG_\textbf{II}(p)$) is the game $AG(p)$ with the following additional restrictions:
    \begin{enumerate}
        \item For $AG_\textbf{I}(p)$, \textbf{I} can only play $q_n$ such that $q_n \lessapprox p$.

        \item For $AG_\textbf{I}(p)$, \textbf{II} can only play $p_n$ such that $p_n \lessapprox p$.
    \end{enumerate}
\end{definition}

\begin{definition}[Definition 2.5, \cite{d20}]
    For each $p \in P$, the \emph{de Rancourt game} $R(p)$ is the game $AG(p)$ with the following additional restriction:
    \begin{enumerate}
        \item For all $n < \omega$, $q_n \leq p_n$ and $p_{n+1} \leq q_n$.
    \end{enumerate}
\end{definition}

\begin{definition}[Definition 3.1, \cite{d20}]
    For each $p \in P$, the \emph{Gowers game} $G(p)$ is defined as a game played by Player \textbf{I} and \textbf{II} in the following form:
    \begin{center}
        \begin{tabular}{c|cccccc}
            \textbf{I} & $p_0$ & & $p_1$ & & $\cdots$ \\
            \hline
            \textbf{II} & & $x_0$ & & $x_1$ & $\cdots$ \\
        \end{tabular}
    \end{center}
    such that $x_n \in X$ and $p_n \in P$ for all $n$, and that the following additional condition must be fulfilled for all $n < \omega$:
    \begin{enumerate}
        \item $(x_0,\dots,x_n) \lhd p_n$.

        \item $p_n \leq p$.
    \end{enumerate}
    The outcome of this game is $(x_0,x_1,\dots)$. We say that \textbf{I} (resp. \textbf{II}) has a strategy in $K(p)$ to \emph{reach} $\X \subseteq X^\omega$ if it has a strategy in $K(p)$ to ensure that the outcome is in $\X$. 
\end{definition}

\begin{definition}[Definition 3.1, \cite{d20}]
    For each $p \in P$, the \emph{asymptotic game} $F(p)$ is the game $G(p)$ with the following additional restriction:
    \begin{enumerate}
        \item For all $n < \omega$, $p_n \lessapprox p$.
    \end{enumerate}
\end{definition}

We now introduce several variants of game-theoretic Ramsey properties.

\begin{definition}[Definition 2.3, \cite{d20}]
    A set $\X \subseteq X^\omega$ is \emph{adversarially Ramsey} if for all $p \in P$, there exists some $q \leq p$ such that one of the following holds:
    \begin{enumerate}
        \item \textbf{I} has a strategy in $AG_\textbf{I}(q)$ to reach $\X$.

        \item \textbf{II} has a strategy in $AG_\textbf{II}(q)$ to reach $\X^c$.
    \end{enumerate}
\end{definition}

\begin{definition}[Definition 3.2, \cite{d20}]
    A set $\X \subseteq X^\omega$ is \emph{strategically Ramsey} if for all $p \in P$, there exists some $q \leq p$ such that one of the following holds:
    \begin{enumerate}
        \item \textbf{I} has a strategy in $F(q)$ to reach $\X^c$.

        \item \textbf{II} has a strategy in $G(q)$ to reach $\X$.
    \end{enumerate}
\end{definition}

\begin{definition}
    A set $\X \subseteq X^\omega$ is \emph{de Rancourt Ramsey} if for all $p \in P$, there exists some $q \leq p$ such that one of the following holds:
    \begin{enumerate}
        \item \textbf{I} has a strategy in $R(q)$ to reach $\X$.

        \item \textbf{II} has a strategy in $R(q)$ to reach $\X^c$.
    \end{enumerate}
\end{definition}

\begin{proposition}
\label{prop:rr.iff.ar}
    Let $p \in P$ and $\X \subseteq X^\omega$.
    \begin{enumerate}
        \item \textbf{I} has a strategy in $R(q)$ to reach $\X$ for some $q \leq p$ iff there exists some $q \leq p$ such that \textbf{I} has a strategy in $AG_\mathbf{I}(q)$ to reach $\X$.

        \item \textbf{II} has a strategy in $R(q)$ to reach $\X$ for some $q \leq p$ iff there exists some $q \leq p$ such that \textbf{II} has a strategy in $AG_\mathbf{II}(q)$ to reach $\X$.
    \end{enumerate}
\end{proposition}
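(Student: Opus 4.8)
The plan is to prove (1) by treating its two implications separately, and to obtain (2) by running the same argument with the roles of the two players (and the corresponding restrictions in the definitions of $AG_{\mathbf{I}}$ and $AG_{\mathbf{II}}$) interchanged; since $R(p)$ is symmetric in \textbf{I} and \textbf{II} apart from the opening move, only minor bookkeeping changes are needed there, so I describe (1) only. Throughout, ``axiom (i)'' abbreviates property (i) of Definition \ref{def:gowers.space}.

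For the implication ``$\Leftarrow$'' the same subspace works: given $q \leq p$ and a strategy $\sigma$ for \textbf{I} in $AG_{\mathbf{I}}(q)$ reaching $\X$, I would define a strategy $\tau$ for \textbf{I} in $R(q)$ that maintains an internal run of $AG_{\mathbf{I}}(q)$. Player \textbf{I} feeds each of \textbf{II}'s $R(q)$-moves into $\sigma$, plays the point $x_n$ that $\sigma$ returns, and, in place of the subspace $q_n \lessapprox q$ returned by $\sigma$, plays a subspace $q_n'$ obtained by applying axiom (2) to the relation $p_n \leq^* q_n$ (which holds because $p_n \leq q$ gives $p_n \leq^* q$ by axiom (1), and $q \leq^* q_n$ by definition of $\lessapprox$); thus $q_n' \leq p_n$, $q_n' \leq q_n$ and $p_n \leq^* q_n'$, and $q_n'$ is a legal $R(q)$-move precisely because $q_n' \leq p_n$. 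Conversely, every legal \textbf{II}-move $(y_n, p_{n+1})$ of $R(q)$ is legal in the internal $AG_{\mathbf{I}}(q)$: from $(x_0,y_0,\dots,x_n,y_n) \lhd q_n'$ and $q_n' \leq q_n$, axiom (5) gives $(x_0,y_0,\dots,x_n,y_n) \lhd q_n$, and $p_{n+1} \leq q_n' \leq p_n \leq q$. The two runs have the same outcome, so $\tau$ reaches $\X$.

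For the implication ``$\Rightarrow$'', which is the substantial direction, one cannot keep the same subspace: a strategy $\sigma$ for \textbf{I} in $R(q)$ may repeatedly push its subspace moves $\leq^*$-strictly below $q$, whereas in $AG_{\mathbf{I}}$ player \textbf{I} may only play subspaces $\lessapprox q$, and by axioms (1)--(2) a direct simulation then fails. Instead I would pass to a sufficiently generic $q' \leq q$ and argue, in the spirit of de Rancourt's amalgamation/combinatorial-forcing arguments for Gowers spaces in \cite{d20} (and parallel to the fusion behind Lemma \ref{lem:R.is.semiselective}), that below $q'$ the subspaces $\sigma$ wishes to play become $\leq^*$-equivalent to the ambient subspace, so that \textbf{I} can realise $\sigma$ inside $AG_{\mathbf{I}}(q')$ while only ever playing subspaces $\lessapprox q'$. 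Concretely, $q'$ would be built as a $\leq^*$-pseudo-intersection (axiom (3)) of a $\leq$-decreasing fusion sequence $q = q_0 \geq q_1 \geq \cdots$, where passing from $q_n$ to $q_{n+1}$ absorbs, one at a time along a fixed $\omega$-enumeration of the finite partial runs of $R$, the subspace that $\sigma$ would return (such a subspace is automatically $\leq$ the current subspace, so axiom (2) applies), after which the pseudo-intersection is re-realised as a genuine $\leq$-subspace of $q$ by one further application of axiom (2).

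The main obstacle is exactly this last construction: arranging the enumeration and the fusion finely enough that a single $q'$ simultaneously neutralises every subspace $\sigma$ prescribes across the entire branching tree of runs of $R$, and then verifying that the ``simulate $\sigma$ while always playing $q'$'' strategy is legal at every position of $AG_{\mathbf{I}}(q')$. Everything else — the two ``$\Leftarrow$'' simulations, and the transfer of the whole argument to the \textbf{II}-versions needed for (2) — is a routine manipulation of axioms (1), (2) and (5).
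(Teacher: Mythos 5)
Your ``$\Leftarrow$'' simulation is correct and is exactly the argument the paper gives: feed \textbf{II}'s $R(q)$-moves into $\sigma$, and replace each $q_n \lessapprox q$ by a common $\leq$-refinement $q_n'$ of $p_n$ and $q_n$ obtained from property (2) of Definition \ref{def:gowers.space} applied to $p_n \leq^* q_n$; legality of \textbf{II}'s subsequent moves in the internal run then follows from $q_n' \leq q_n$ and property (5). Your treatment of (2) by symmetry also matches the paper, which says the argument is almost verbatim.

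The ``$\Rightarrow$'' direction is where your attempt falls short of a proof, and you say so yourself. Two remarks. First, the paper does not prove this direction at all: it is exactly Proposition 2.6 of \cite{d20}, which the paper cites, so reproving it is not required. Second, your sketch as written would not go through: you propose to fuse along ``a fixed $\omega$-enumeration of the finite partial runs of $R$'', but $P$ is not assumed countable (only $X$ is), so the set of finite partial runs of $R(q)$ is in general uncountable and admits no such enumeration. The fusion in de Rancourt's proof is instead indexed by the countable tree $X^{<\omega}$ of point-sequences: one fixes, for each finite sequence of points, a canonical internal run of $R$ whose subspace moves depend only on that point-sequence (and on the fusion sequence built so far), and diagonalises over those countably many subspaces via properties (2) and (3). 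There is also the self-referential issue you flag -- the subspaces $\sigma$ returns depend on the moves fed into it, which depend on the $q'$ being constructed -- and resolving it is precisely the content of the cited proposition, not a routine afterthought. So: part ``$\Leftarrow$'' stands as a complete proof coinciding with the paper's; part ``$\Rightarrow$'' should either be replaced by the citation to Proposition 2.6 of \cite{d20} or reworked with the fusion organised over $X^{<\omega}$ rather than over all partial runs.
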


\begin{proof}
    The forward direction for both statements has been proven in Proposition 2.6 of \cite{d20}, so we only prove the converse for (1) (the proof for the converse for (2) is almost verbatim). Suppose $\sigma$ is a strategy for \textbf{I} in $AG_\textbf{I}(p)$ to reach $\X$, and we define a strategy $\tau$ for \textbf{I} in $R(p)$. For each state $s$ for \textbf{II} of $R(p)$ following $\tau$, we shall correspond it to a state $t_s$ for \textbf{II} of $AG_\textbf{I}(p)$ realising $a(s)$. Start by letting $t_{(p_0)} := (p_0)$ for any $p_0 \leq p$. Now let $s$ be a state of $R(p)$ for \textbf{II} following $\tau$ so far, with $s = {s'}^\frown(y_n,p_{n+1})$. Suppose by the induction hypothesis that there exists a corresponding state $t_s$ of the game $AG_\textbf{I}(p)$ such that:
    \begin{enumerate}
        \item $a(s') = a(t_{s'})$;

        \item $\sigma(t_{s'}) = (x_n,q_n')$ for some $q_n' \lessapprox p$.
    \end{enumerate}
    Now let $t_s := {t_{s'}}^\frown(y_n,p_{n+1})$, and suppose $\sigma(t_s) = (x_{n+1},q_{n+1}')$ for some $q_{n+1}' \lessapprox p$. Since $p_{n+1} \leq p$, by Property (2) of Definition \ref{def:gowers.space} there exists some $q_{n+1} \leq p_{n+1}$ such that $q_{n+1} \leq q_{n+1}'$. Then $\tau(s) := (x_{n+1},p_{n+1})$ is a legal continuation. This completes the inductive definition of $\tau$, which is a winning strategy as every complete play following $\tau$ corresponds to a complete play following $\sigma$ realising the same sequence.
\end{proof}

\subsection{The Kastanas Game}
We now introduce (our version of) the Kastanas game for Gowers spaces.

\begin{definition}[Definition 2.5, \cite{d20}]
    For each $p \in P$, the \emph{Kastanas game} $K(p)$ is defined as a game played by Player \textbf{I} and \textbf{II} in the following form:
    \begin{center}
        \begin{tabular}{c|cccccc}
            \textbf{I} & $p_0$ & & $p_1$ & & $p_2$ & $\cdots$ \\
            \hline
            \textbf{II} & & $x_0,q_0$ & & $x_1,q_1$ & & $\cdots$ 
        \end{tabular}
    \end{center}
    such that $x_n \in X$ and $p_n,q_n \in P$ for all $n$, and that the following additional condition must be fulfilled for all $n < \omega$:
    \begin{enumerate}
        \item $(x_0,\dots,x_n) \lhd p_n$.

        \item $q_n \leq p_n$ and $p_{n+1} \leq q_n$.
    \end{enumerate}
    The outcome of this game is $(x_0,x_1,\dots)$. We say that \textbf{I} (resp. \textbf{II}) has a strategy in $K(p)$ to \emph{reach} $\X \subseteq X^\omega$ if it has a strategy in $K(p)$ to ensure that the outcome is in $\X$. 
\end{definition}

\begin{definition}
\label{def:kr.gowers}
    A set $\X \subseteq \X^\omega$ is \emph{Kastanas Ramsey} if for all $p \in P$, there exists some $q \leq p$ such that one of the following holds:
    \begin{enumerate}
        \item \textbf{I} has a strategy in $K(q)$ to reach $\X^c$.

        \item \textbf{II} has a strategy in $K(q)$ to reach $\X$.
    \end{enumerate}
\end{definition}

\begin{proposition}
\label{prop:kr.iff.sr}
    A subset $\X \subseteq X^\omega$ is Kastanas Ramsey iff $\X$ is strategically Ramsey (Definition 3.2, \cite{d20}).
\end{proposition}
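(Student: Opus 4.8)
The plan is to prove both implications by direct strategy-translation arguments, using the fact that the Kastanas game $K(p)$ and the Gowers game $G(p)$ (resp. the asymptotic game $F(p)$) have the same move structure for \textbf{II} (resp. for \textbf{I}), except that in $K(p)$ the opponent is allowed to narrow the subspace between rounds. The key observation is that a strategy for \textbf{II} in $G(q)$ can be lifted verbatim to a strategy for \textbf{II} in $K(q')$ for a suitable $q' \le q$: in $G(q)$, Player \textbf{I} plays subspaces $p_n \le q$ and \textbf{II} responds with points; in $K(q')$, \textbf{I} additionally constrains \textbf{II}'s subspace moves $q_n$, but since \textbf{II} in the Gowers game does not play subspaces at all, \textbf{II} can simply play $q_n := p_n$ (which is legal since $q_n \le p_n$ is required and $p_{n+1}\le q_n$ will be guaranteed by \textbf{I}'s next move in $K$). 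Conversely, a strategy for \textbf{I} in $F(q)$ uses the extra strength $p_n \lessapprox q$, which via Property (2) of Definition \ref{def:gowers.space} can be simulated inside $K(q)$ where \textbf{I} only gets $p_n \le q$; the point-sequence outcomes are literally the same, so a strategy to reach $\X^c$ transfers.

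More precisely, I would argue as follows. For the direction ``Kastanas Ramsey $\implies$ strategically Ramsey'': fix $p$ and obtain $q \le p$ from Definition \ref{def:kr.gowers}. If \textbf{II} has a strategy $\tau$ in $K(q)$ to reach $\X$, I claim \textbf{II} has one in $G(q)$ to reach $\X$: given \textbf{I}'s moves $p_0, p_1, \dots$ in $G(q)$, have \textbf{II} internally run $\tau$ against the $K(q)$-play in which \textbf{I} plays the same $p_n$'s, and read off \textbf{II}'s point moves; the subspace moves $q_n$ that $\tau$ produces are ignored (or rather, the simulation feeds $\tau$ the subspaces $p_{n+1}$ as if they were legal, which they are since $p_{n+1}\le q$ and we only need $p_{n+1}\le q_n$ — so actually we must be slightly careful and have \textbf{II} in the internal $K(q)$-game play $q_n$ equal to whatever $\tau$ dictates, then note \textbf{I}'s real move $p_{n+1}$ in $G$ need not satisfy $p_{n+1}\le q_n$). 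This is the one subtle point: to make the internal $K$-play legal we instead have \textbf{II} feed $\tau$ a fictitious \textbf{I}-move obtained by passing to a common lower bound of $p_{n+1}$ and $q_n$ (using $q_n \le q$ and Property (2) or (3) of the Gowers space axioms), which is $\le q$ and $\le q_n$, hence legal in $K(q)$; the outcome sequence of points is unchanged because the fictitious move only shrinks the subspace and \textbf{II}'s point responses are what we output. If instead \textbf{I} has a strategy in $K(q)$ to reach $\X^c$, then since $F(q)$ is $G(q)$ with \textbf{I}'s moves further restricted to $p_n \lessapprox q$ — i.e. \textbf{I} in $F(q)$ has \emph{fewer} options than in $K(q)$ — we need the reverse: a strategy for \textbf{I} in $F(q)$ to reach $\X^c$. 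Here I translate a $K(q)$-strategy for \textbf{I} into an $F(q)$-strategy by, whenever \textbf{I} would play $p_n \le q_{n-1}$ in $K$, instead playing a subspace $p_n' \lessapprox q$ obtained from Property (2) applied to get $p_n'$ below both $p_n$ and $q$ with $q \le^* p_n'$; one checks the point-outcomes match.

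For the converse ``strategically Ramsey $\implies$ Kastanas Ramsey'': fix $p$, get $q \le p$ from Definition 3.2 of \cite{d20}. If \textbf{II} has a strategy in $G(q)$ to reach $\X$, lift it to $K(q)$ for \textbf{II} exactly as above (now in the easy direction: \textbf{II} in $K(q)$ plays the $G$-strategy's points, and picks any legal $q_n \le p_n$, e.g. $q_n := p_n$, since $K$ only demands \textbf{I}'s later move satisfy $p_{n+1}\le q_n$, which \textbf{I} must obey). If \textbf{I} has a strategy in $F(q)$ to reach $\X^c$, lift it to $K(q)$: in $K(q)$ \textbf{I} gets to play $p_{n+1}\le q_n \le p_n$, which is \emph{stronger} than the $F$-constraint's intent (shrinking), so \textbf{I} simulates the $F(q)$-play — but must produce $p_{n+1}$ with $p_{n+1}\lessapprox q$ from the $F$-strategy and this may clash with $p_{n+1}\le q_n$; resolve by having \textbf{I} in $K(q)$ feed its internal $F$-strategy the subspace $q_n$ (legal in $F$? no — $F$ requires $\lessapprox q$). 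So again use Property (2)/(3): from $q_n \le q$ obtain $q_n'\lessapprox q$ with $q_n' \le q_n$, feed that to the $F$-strategy, obtain $p_{n+1}'\lessapprox q$, then play in $K(q)$ a common lower bound of $p_{n+1}'$ and $q_n$; outcomes of points agree.

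The main obstacle I anticipate is precisely this bookkeeping around the constraint $q_n \le p_n$, $p_{n+1}\le q_n$ in the Kastanas game versus the $\lessapprox q$ constraints in $F$ and the unconstrained-subspace-for-\textbf{II} nature of $G$: one must repeatedly invoke the amalgamation axioms (Properties (2) and (3) of Definition \ref{def:gowers.space}) to pass between a subspace that is merely $\le q$ and one that is $\lessapprox q$, while verifying that these adjustments never alter the sequence of points that constitutes the game outcome. Everything else is routine transcription of moves between the two game trees.
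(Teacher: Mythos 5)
Your ``strategically Ramsey $\implies$ Kastanas Ramsey'' direction is essentially fine: lifting a $G(q)$-strategy for \textbf{II} into $K(q)$ by setting $q_n := p_n$ works, and lifting an $F(q)$-strategy for \textbf{I} into $K(q)$ works because $q_n \leq q \leq^* p_{n+1}'$ (where $p_{n+1}' \lessapprox q$ is the $F$-strategy's move), so Property (2) of Definition \ref{def:gowers.space} really does give a common lower bound of $q_n$ and $p_{n+1}'$, and Property (5) makes \textbf{II}'s point a legal continuation of the simulated $F$-play.

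The converse direction has a genuine gap. Both of your translations there rest on producing a common lower bound (or a $\lessapprox q$ refinement) of two subspaces that are merely both $\leq q$: in the $K\to G$ simulation you need a common lower bound of \textbf{I}'s actual move $p_{n+1} \leq q$ and the internal move $q_n \leq q$, and in the $K \to F$ simulation you need $p_n' \leq p_n$ with $q \leq^* p_n'$. Property (2) only amalgamates subspaces that are $\leq^*$-comparable, and here they need not be: in the Ellentuck-type example $P = [\N]^\infty$, two infinite subsets of $q$ can be disjoint, and no subset of a ``thin'' $p_n \leq q$ can satisfy $q \subseteq^* p_n'$. Property (3) is about decreasing sequences and does not help either. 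Moreover the difficulty is not just in your bookkeeping: the implication ``\textbf{II} wins $K(q)$ $\implies$ \textbf{II} wins $G(q)$'' with the \emph{same} $q$ is not available; one must pass to a carefully constructed $q' \leq q$, and producing $q'$ requires a fusion/diagonalisation over a $\leq$-decreasing sequence of subspaces (using Property (3)) together with a bookkeeping over all finite point-sequences. This is exactly the content of Proposition 2.6 of \cite{d20}, which is the nontrivial half of Proposition \ref{prop:rr.iff.ar}. The paper avoids redoing that work by interleaving a dummy point $0$ to turn $K$, $F$, $G$ into the symmetric games $R$, $AG_{\textbf{II}}$, $AG_{\textbf{I}}$ on an auxiliary Gowers space and then quoting Proposition \ref{prop:rr.iff.ar}; your direct route would have to reprove an analogue of that diagonalisation, and as written it does not.
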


We shall prove this proposition as a corollary of Proposition \ref{prop:rr.iff.ar}. 

\begin{proof}
    We let $(P,X,\leq,\leq^*,\lhd)$ be a Gowers space, and assume WLOG that $0 \notin X$. We then define a relation $\blhd \; \subseteq (X \cup \{0\})^{<\omega} \times P$ such that for all $s \in (X \cup \{0\})^{<\omega}$:
    \begin{enumerate}
        \item If $\lh(s)$ is odd, then for all $x \in X \cup \{0\}$ and $p \in P$, $s^\frown x \blhd P$ iff $x = 0$.
    
        \item If $\lh(s)$ is even, then for all $x \in X \cup \{0\}$ and $p \in P$, $s^\frown x \blhd P$ iff $x \neq 0$ and $s^\frown x \lhd P$.
    \end{enumerate}
    
    It is easy to verify that $(P,X \cup \{0\},\leq,\leq^*,\blhd)$ is a Gowers space. We now define an injective function $f : X^{<\omega} \to (X \cup \{0\})^{<\omega}$ by:
    \begin{align*}
        f((x_0,\dots,x_{n-1})) := (x_0,0,x_1,0\dots,x_{n-1},0)
    \end{align*}
    and naturally extend $f$ to $X^\omega \to (X \cup \{0\})^\omega$. Note that $f$ is injective. For each $p \in P$, we also define the functions $g,h$ by:
    \begin{align*}
        g(p_0,0,p_1,0,p_2,\dots) &:= (p_0,p_1,p_2,\dots), \\
        h(x_0,q_0,x_1,q_1,\dots) &:= (x_0,x_1,\dots).
    \end{align*}
    We may now observe that:
    \begin{enumerate}
        \item $\sigma$ is a strategy for \textbf{I} in $K(p)$ to reach $\X$ iff:
        \begin{align*}
            s \mapsto 
            \begin{cases}
                \sigma(s), &\text{if $s = \emptyset$}, \\
                (0,\sigma(s)), &\text{if $s \neq \emptyset$}. \\
            \end{cases}
        \end{align*}
        is a strategy for \textbf{II} in $R(p)$ to reach $f[\X]$.

        \item $\sigma$ is a strategy for \textbf{II} in $K(p)$ to reach $\X$ iff $\sigma \circ g$ is a strategy for \textbf{I} in $R(p)$ to reach $f[\X]$.

        \item $\sigma$ is a strategy for \textbf{I} in $F(p)$ to reach $\X$ iff:
        \begin{align*}
            s \mapsto 
            \begin{cases}
                (\sigma \circ h)(s), &\text{if $s = \emptyset$}, \\
                (0,(\sigma \circ h)(s)), &\text{if $s \neq \emptyset$}. \\
            \end{cases}
        \end{align*}
        is a strategy for \textbf{II} in $AG_\textbf{II}(p)$ to reach $f[\X]$.

        \item $\sigma$ is a strategy for \textbf{II} in $G(p)$ to reach $\X$ iff $s \mapsto ((\sigma \circ g)(s),p)$ is a strategy for \textbf{I} in $AG_\textbf{I}(p)$ to reach $f[\X]$.
    \end{enumerate}
    Therefore, the proposition follows from Proposition \ref{prop:rr.iff.ar}.
\end{proof}

\subsection{Gowers \textbf{wA2}-Spaces}
We shall now reformulate the above result in the context of \textbf{wA2}-spaces. In \cite{M07}, Mijares introduced the notion of an \emph{almost reduction} for spaces satisfying \textbf{A1}-\textbf{A4}, which may be applied to \textbf{wA2}-spaces. We introduce a variant of this almost reduction, restricted to a fixed initial segment.

\begin{notation}
    Let $(\cal{R},\leq,r)$ be a \textbf{wA2}-space. Given $A,B \in \cal{R}$ and $a \in \cal{AR}$, we write $A \leq_a^* B$ iff there exists some $b \in \cal{AR}\restrictedto[a,A]$ such that $[b,A] \subseteq [b,B]$.
\end{notation}

Note that $\leq_a^*$ need not be a transitive relation - counterexample would be the topological Ramsey space of strong subtrees (which satisfies \textbf{A1}-\textbf{A4}). Note also that, by \textbf{A1}, we may identify each element $a \in \cal{AR}$ with the sequence $(r_n(a))_{1 \leq n \leq \lh(a)} \in \cal{AR}^{<\omega}$.

\begin{definition}
    Let $(\cal{R},\leq,r)$ be a \textbf{wA2}-space. We say that $\cal{R}$ is \emph{Gowers} if there exists a relation $\lhd \subseteq \cal{AR} \times \cal{R}$ such that  the following properties hold:
    \begin{enumerate}
        \item[(\textbf{G1}-\textbf{5})] For all $A \in \cal{R}$ and $a \in \cal{AR}\restrictedto A$, $([a,A],\cal{AR}\restrictedto[a,A],\leq,\leq_a^*,\lhd)$ is a Gowers space (when identifying elements of $\cal{AR}$ with $\cal{AR}^{<\omega}$).

        \item[(\textbf{G6})] Let $A,B \in \cal{R}$. Let $a \in \cal{AR}\restrictedto A \cap \cal{AR}\restrictedto B$.
        \begin{enumerate}[label=(\arabic*)]
            \item $[a,A] \subseteq [a,B]$ iff $r_n(A) \lhd B$ for all $n > \depth_A(a)$.

            \item If there exists some $N$ such that $r_n(A) \lhd B$ for all $n \geq N$, then $A \leq_a^* B$.
        \end{enumerate}

        \item[(\textbf{G7})] For all $A \in \cal{R}$, $a \in \cal{AR}\restrictedto A$ and $B \leq A$, there exists some $C \in [\depth_A(a),A]$ such that for all $b \in \cal{AR}\restrictedto[a,A]$, if $b \lhd C$ then $b \lhd B$.
    \end{enumerate}
\end{definition}

\begin{example}[Natural numbers/Ellentuck space $\lbrack \N\rbrack^\infty$]
    We show that $([\N]^\infty,\subseteq,r)$ is a Gowers \textbf{wA2}-space.
    \begin{enumerate}
        \item[(\textbf{G1}-\textbf{5})] Let $A \in [\N]^\infty$ and $a \in [\N]^{<\infty}\restrictedto A$. Note for all $B,C \in [a,A]$, $C \leq_a^* B$ iff $C \setminus N \subseteq B$ for some $N \geq \max(a)$. Given $b = a \cup \{x_{|a|},\dots,x_n\} \in [\N]^{<\infty}\restrictedto[a,A]$ and $B \in [a,A]$, we define $b \lhd B$ iff $x_n \in B$.
        \begin{enumerate}[label=(\arabic*)]
            \item Clearly $C \subseteq B$ implies that $C \leq_a^* B$.
            
            \item If $C \leq_a^* B$, then there exists some $n$ such that $D := a \cup (C \setminus n) \subseteq B$. Then $D \subseteq C$, $D \subseteq B$ and $C \leq_a^* D$ as $(C \setminus a) \setminus (D \setminus a) \subseteq n$.
            
            \item Let $(B_n)_{n<\omega}$ be a $\subseteq$-decreasing sequence in $[a,A]$, and let $C \subseteq B_0 \setminus a$ be such that $C \subseteq^* B_n$ for all $n$. Then $a \cup C \in [a,A]$ and $a \cup C \leq_a^* B_n$ for all $n$.
            
            \item Given $B \in [a,A]$ and $b \in [\N]^{<\infty}\restrictedto[a,A]$, $b \cup \{x\} \lhd B$ for any $x \in B$ such that $\max(b) < x$.
            
            \item If $b = a \cup \{x_0,\dots,x_n\} \in [\N]^{<\infty}\restrictedto[a,A]$, $b \lhd C$ and $C \subseteq B$, then $x_n \in C \subseteq B$, so $b \lhd B$.
        \end{enumerate}

        \item[(\textbf{G6})] Let $A,B \in [\N]^\infty$ and $b \in [\N]^{<\infty}\restrictedto[a,A] \cap [\N]^{<\infty}\restrictedto[a,B]$. We write $A = \{x_0,x_1,\dots\}$ and $m := \depth_A(a)$ (i.e. $\max(a) = x_{m-1}$). 
        \begin{enumerate}[label=(\arabic*)]
            \item We have that:
            \begin{align*}
                &\iffbreak [a,A] \subseteq [a,B] \\
                &\iff A \setminus \max(a) \subseteq B \\
                &\iff x_n \in B \text{ for all $n \geq m$} \\
                &\iff (x_0,\dots,x_n) \lhd B \text{ for all $n \geq \depth_A(a)$} \\
                &\iff r_n(A) \lhd B \text{ for all $n > \depth_A(a)$}.
            \end{align*}

            \item If $\{x_0,\dots,x_n\} \lhd B$ for all $n \geq N > m$, then we have that $A \setminus (a \cup \{x_{|a|},\dots,x_N\}) \subseteq B$, so $A \leq_a^* B$.
        \end{enumerate}

        \item[(\textbf{G7})] Let $A \in [\N]^\infty$, $a \in [\N]^{<\infty}\restrictedto A$ and $B \subseteq A$. Let $m := \depth_A(a)$, and let $C := r_m(A) \cup (B \setminus \max(a))$. Then for all $b \in [\N]^{<\infty}[a,A]$, if $b = a \cup \{x_{|a|},\dots,x_n\}$ and $b \lhd C$, then $x_n > \max(a) = \max(r_m(A))$ and $x_n \in C \subseteq B$, so $c \lhd B$.
    \end{enumerate}
\end{example}

\begin{example}[Countable vector space $E^{[\infty]}$]
    We show that $(E^{[\infty]},\leq,r)$ is a Gowers \textbf{wA2}-space. Given some $A = (x_n)_{n<\omega} \in E^{[\infty]}$, we denote $A/N := (x_n)_{n \geq N}$.
    \begin{enumerate}
        \item[(\textbf{G1}-\textbf{5})] Let $A \in E^{[\infty]}$ and $a \in E^{[<\infty]}\restrictedto A$. Note for all $B,C \in [a,A]$, $C \leq_a^* B$ iff $C/N \leq B$ for some $N \geq \lh(a)$. Given $b = a^\frown(x_{|a|},\dots,x_n) \in E^{[<\infty]}\restrictedto[a,A]$ and $B \in [a,A]$, we define $b \lhd B$ iff $x_n \in \c{B}$.
        \begin{enumerate}[label=(\arabic*)]
            \item Clearly $C \leq B$ implies that $C \leq_a^* B$.
            
            \item If $C \leq_a^* B$, then there exists some $N \geq \lh(a)$ such that $D := a^\frown(C/N) \leq B$. Then $D \leq C$, $D \leq B$ and $C \leq_a^* D$ as $C/(\lh(a) + N) \leq D$.
            
            \item Let $(B_n)_{n<\omega}$ be a $\leq$-decreasing sequence in $[a,A]$, and let $C \leq B_0/\lh(a)$ be such that $C \leq^* B_n/\lh(a)$ for all $n$ ($C = (x_n)_{n<\omega}$ may be constructed by picking $x_n \in \c{B_n}$). Then $a^\frown C \in [a,A]$ and $a^\frown C \leq_a^* B_n$ for all $n$.
            
            \item Given $B \in [a,A]$ and $b \in E^{[<\infty]}\restrictedto[a,A]$, $b^\frown x \lhd B$ for any $x \in B$ such that $\max(b) < x$.
            
            \item If $b = a^\frown(x_{|a|},\dots,x_n) \in E^{[<\infty]}$, $b \lhd C$ and $C \leq B$, then $x_n \in \c{C} \subseteq \c{B}$, so $b \lhd B$.
        \end{enumerate}

        \item[(\textbf{G6})] Let $A,B \in E^{[\infty]}$ and $b \in E^{[\infty]}\restrictedto[a,A] \cap E^{[\infty]}\restrictedto[a,B]$. We write $A = (x_0,x_1,\dots)$ and $m := \depth_A(a)$. 
        \begin{enumerate}[label=(\arabic*)]
            \item We have that:
            \begin{align*}
                &\iffbreak [a,A] \subseteq [a,B] \\
                &\iff A/m \leq B \\
                &\iff x_n \in \c{B} \text{ for all $n \geq m$} \\
                &\iff (x_0,\dots,x_n) \lhd B \text{ for all $n \geq m$} \\
                &\iff r_n(A) \lhd B \text{ for all $n > \depth_A(a)$}.
            \end{align*}

            \item If $(x_0,\dots,x_n) \lhd B$ for all $n \geq N > m$, then we have that $A/N \leq B$, so $A \leq_a^* B$.
        \end{enumerate}

        \item[(\textbf{G7})] Let $A \in E^{[\infty]}$, $a \in E^{[<\infty]}\restrictedto A$ and $B \subseteq A$. Let $m := \depth_A(a)$, and let $C := r_m(A)^\frown(B/N)$, where $N > \max(\supp(a))$. Then for all $b \in E^{[\infty]}[a,A]$, if $b = a \cup \{x_{|a|},\dots,x_n\}$ and $b \lhd C$, then $\min(\supp(x_n)) > \max(\supp(a)) = \max(\supp(r_m(A)))$ and $x_n \in \c{C} \subseteq \c{B}$, so $c \lhd B$.
    \end{enumerate}
\end{example}

\begin{theorem}
\label{thm:kr.strategies.equivalence}
    Let $(\cal{R},\leq,r)$ be a Gowers \textbf{wA2}-space, and let $\X \subseteq \cal{R}$. Let $A \in \cal{R}$ and $a \in \cal{AR}$. The following are equivalent:
    \begin{enumerate}
        \item \textbf{I} (resp. \textbf{II}) has a strategy in $K[a,A]$ to reach $\X$.

        \item \textbf{I} (resp. \textbf{II}) has a strategy in $K(A)$ (as a game of the Gowers space $([a,A],\cal{AR}\restrictedto[a,A],\leq,\leq_a^*,\lhd)$) to reach $\X \cap [a,A]$.
    \end{enumerate}
\end{theorem}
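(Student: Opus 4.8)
The plan is to set up an explicit translation between partial plays of $K[a,A]$ (the \textbf{wA2}-Kastanas game) and partial plays of $K(A)$ viewed inside the Gowers space $([a,A],\cal{AR}\restrictedto[a,A],\leq,\leq_a^*,\lhd)$, and to show this translation carries strategies to strategies in both directions, preserving outcomes. First I would recall the two move-shapes: in $K[a,A]$ a round consists of \textbf{I} playing $A_n$, then \textbf{II} playing $a_{n+1} \in r_{\lh(a_n)+1}[a_n,A_n]$ and $B_n \in [a_{n+1},A_n]$; in $K(A)$ (Gowers) \textbf{I} plays a subspace $p_n$ with $(x_0,\dots,x_n)\lhd p_n$, then \textbf{II} plays a point $x_n$ and a subspace $q_n$ with $q_n \leq p_n$ and $p_{n+1}\leq q_n$. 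Under the identification of $\cal{AR}$-elements with sequences in $\cal{AR}^{<\omega}$, the "points" of the Gowers space $[a,A]$ are the one-step extensions of $a$, so a point played by \textbf{II} corresponds exactly to the choice of $a_{n+1}$ (its last coordinate), while the subspaces $p_n, q_n$ correspond to the \textbf{I}-move $A_n$ and the \textbf{II}-move $B_n$ respectively. Thus a run of $K(A)$ with outcome $(x_0,x_1,\dots)$ corresponds to a run of $K[a,A]$ with outcome $\lim_n a_n$, where $r_{\lh(a)+n}(\lim_n a_n)$ records $(x_0,\dots,x_{n-1})$; in particular the outcome lands in $\X\cap[a,A]$ iff the corresponding $\lim_n a_n$ lands in $\X$.

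The substantive work is to verify that the move-legality conditions match up, and this is exactly where axioms (\textbf{G1})--(\textbf{G7}) and the deepness-free structure of \textbf{A3} are needed. In one direction, given a Kastanas run in $K[a,A]$, the condition $a_{n+1}\in r_{\lh(a_n)+1}[a_n,A_n]$ translates to $(x_0,\dots,x_{n+1})\lhd A_n = p_n$ via (\textbf{G6})(1) (the Ellentuck-neighbourhood containment $[a_{n+1},A_n]\neq\emptyset$ forces $r_m(\text{anything in }[a_{n+1},A_n])\lhd A_n$ for $m$ past the depth), and $B_n\in[a_{n+1},A_n]$ translates to $q_n := B_n \leq p_n = A_n$ plus the containment of Ellentuck neighbourhoods needed for $p_{n+1}=A_{n+1}\leq q_n=B_n$ — but \textbf{I}'s next move in $K[a,A]$ is $A_{n+1}\in[a_{n+1},B_n]$, which indeed gives $A_{n+1}\leq B_n$. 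Conversely, from a Gowers run in $K(A)$ one recovers the $\cal{AR}$-data $a_{n+1}$ as the stem determined by $(x_0,\dots,x_{n+1})$, and uses (\textbf{G7}) together with (\textbf{G6})(2) to adjust each $q_n$ so that $A_n := p_n$ and $B_n := q_n$ genuinely lie in the required Ellentuck neighbourhoods $[a_{n+1},A_n]$ — this is where one may need to shrink $q_n$ slightly, replacing it by a $\leq$-smaller subspace inside the right neighbourhood, which is legal since Gowers subspaces and Ellentuck neighbourhoods both refine downward. A strategy for either player in one game is then obtained from a strategy in the other by composing with these (finite-stage, deterministic) translations of partial states; since the translations are mutually inverse on outcomes, "reach $\X$" is preserved.

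The main obstacle I expect is the bookkeeping in the backward direction (Gowers $\Rightarrow$ \textbf{wA2}): a raw play of $K(A)$ need not literally have its subspaces $q_n$ sitting inside $[a_{n+1},A_n]$ — it only satisfies the Gowers inequalities $q_n\leq p_n$, $p_{n+1}\leq q_n$ — so one has to interpolate, for each round, an Ellentuck neighbourhood using (\textbf{G6}) and (\textbf{G7}), and then check that the interpolated object is still $\leq$-below what the next Gowers move requires, so that the translated strategy remains legal. This is a finite, round-by-round argument with no fusion or limiting step, so no new combinatorial input beyond (\textbf{G1})--(\textbf{G7}) is needed; the care is purely in threading the two notions of "smaller" (the quasi-order $\leq$ and the Ellentuck-neighbourhood refinement) through each other at every stage. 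Once the legality bookkeeping is done, the equivalence of strategies and the preservation of outcomes are immediate, and the "resp." for Players \textbf{I} and \textbf{II} is handled uniformly since the translation is symmetric in the roles.
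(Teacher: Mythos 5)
Your proposal is correct and follows essentially the same route as the paper: a round-by-round translation of partial states between $K[a,A]$ and $K(A)$, using \textbf{G7} to interpolate an element of the appropriate Ellentuck neighbourhood below a given Gowers subspace (so that the opponent's move can be fed to the original strategy) and \textbf{G6} together with property (2) of the Gowers-space axioms to recover the $\leq$/$\leq_a^*$ inequalities needed for the translated move to be legal. The only cosmetic difference is that you locate the interpolation burden mainly in the Gowers-to-\textbf{wA2} direction, whereas the paper carries out the same \textbf{G7}/\textbf{G6} bookkeeping explicitly in the \textbf{wA2}-to-Gowers direction and notes the converse is simpler; the mechanism is identical.
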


\begin{proof}
    \underline{(1)$\implies$(2), Player \textbf{I}:} Let $\sigma$ be a strategy for \textbf{I} in $K[a,A]$ to reach $\X$. We define a strategy $\tau$ for \textbf{I} in $K(A)$ as follows: Let $s$ be a state for \textbf{II} in $K(A)$ following $\tau$ so far, and suppose that $s = {s'}^\frown(a_n,B_{n-1})$, and we have defined a state $t_{s'}$ for \textbf{II} in $K[a,A]$ such that $a(s) = a(t_{s'})$ (i.e. they realise the same finite sequence so far), and $\tau(s') \leq \sigma(t_{s'})$. Note that for the base case, we define $t_\emptyset := \emptyset$, and $\tau(\emptyset) := \sigma(\emptyset)$. 
    
    Since $B_n \leq \tau(s') \leq \sigma(t_{s'})$ and $a_n \sqsubseteq \sigma(t_{s'})$, by \textbf{G7} there exists some $C_{n-1} \in [a_n,\sigma(t_{s'})]$ such that for all $b \in \cal{AR}\restrictedto[a_n,C_n]$, if $b \lhd C_{n-1}$ then $b \lhd B_{n-1}$. We may thus define the legal continuation $t_s := {t_{s'}}^\frown(a_n,C_{n-1})$. Then by \textbf{G6}, $\sigma(t_s) \leq C_{n-1} \leq_a^* B_{n-1}$, so by \textbf{G2} (i.e. Property (2) of Definition \ref{def:gowers.space}) we may define $\tau(s) \leq B_n$ to be such that $\tau(s) \leq \sigma(t_s)$. This completes the inductive definition of $\tau$, and it is a winning strategy for \textbf{I} as every complete play $s$ of $K(A)$ following $\tau$ induces a complete play $t_s$ of $K[a,A]$ following $\sigma$, with the same outcome.

     \underline{(1)$\implies$(2), Player \textbf{II}:} Let $\sigma$ be a strategy for \textbf{II} in $K[a,A]$ to reach $\X$. We define a strategy $\tau$ for \textbf{II} in $K(A)$ as follows: Let $s$ be a state for \textbf{I} in $K(A)$ following $\tau$ so far, and suppose that $s = {s'}^\frown(\tau(s'),A_n)$, and we have defined a state $t_{s'}$ for \textbf{I} in $K[a,A]$ such that $a(s) = a(t_{s'})$, and $B_{n-1} \leq C_{n-1}$. Note that for the base case, we define $t_{(A_0)} := (A_0)$, and $\tau((A_0)) := \sigma((A_0))$.

     We write $\tau(s') = (a_n,B_{n-1})$ and $\sigma(t_{s'}) = (a_n,C_{n-1})$. Since $A_n \leq B_{n-1} \leq C_{n-1}$, by \textbf{G7} there exists some $A_n' \in [a_n,B_{n-1}]$ such that for all $b \in \cal{AR}\restrictedto[a_n,B_{n-1}]$, if $b \lhd A_n'$ then $b \lhd A_n$. We may thus define the legal continuation $t_s := {t_{s'}}^\frown(A_n')$. By \textbf{G7}, if $\sigma(t_{s'}) = (a_{n+1},C_n)$, then we may define $\tau(s) = (a_{n+1},B_n)$, where $B_n \leq A_n$ and $B_n \leq C_n$. This completes the inductive definition of $\tau$, and it is a winning strategy for \textbf{II} as every complete play $s$ of $K(A)$ following $\tau$ induces a complete play $t_s$ of $K[a,A]$ following $\sigma$, with the same outcome.

     The proof for (2)$\implies$(1) for both players is similar but simpler, mostly using \textbf{G7} to make the necessary changes to the strategy. For instance, suppose that $\sigma$ is a strategy for \textbf{I} in $K(A)$ to reach $\X \cap [a,A]$. Suppose that in the game $K[a,A]$, \textbf{II} responded with $(a_n,B_{n-1})$, and $\sigma$ then responds with some $A_n' \leq B_{n-1}$. By \textbf{G7}, we instead ask \textbf{I} to respond with $A_n \in [a_n,B_{n-1}]$ such that for all $a_{n+1} \in r_{\lh(a_n)+1}[a_n,B_{n-1}]$, if $a_{n+1} \lhd A_n$ then $a_{n+1} \lhd A_n'$. Then this modification gives \textbf{I} a strategy in $K[a,A]$ to reach $\X$.
\end{proof}

Alternatively, one may define the corresponding de Rancourt game for \textbf{wA2}-spaces, then define a corresponding notion of de Rancourt Ramsey, and prove using similar methods that the corresponding notion of de Rancourt Ramsey is equivalent to that for Gowers spaces. Then Theorem \ref{thm:kr.strategies.equivalence} may be deduced using the maps $g,h$ defined in the proof of Proposition \ref{prop:kr.iff.sr}.

Consequently, we have the following immediate corollary.

\begin{corollary}
\label{cor:kr.equivalence}
    Let $(\cal{R},\leq,r)$ be a Gowers \textbf{wA2}-space, and let $\X \subseteq \cal{R}$. The following are equivalent:
    \begin{enumerate}
        \item $\X$ is Kastanas Ramsey (as in Definition \ref{def:kastanas.ramsey}).

        \item For all $A \in \cal{R}$ and $a \in \cal{AR}$, $\X \cap [a,A]$ is a Kastanas Ramsey subset of $[a,A]$ (as in Definition \ref{def:kastanas.ramsey}).

        \item For all $A \in \cal{R}$ and $a \in \cal{AR}$, $\X \cap [a,A]$ is a Kastanas Ramsey subset of $[a,A]$ (as in Definition \ref{def:kr.gowers} for the Gowers space $([a,A],\cal{AR}\restrictedto[a,A],\leq,\leq_a^*,\lhd)$).
    \end{enumerate}
\end{corollary}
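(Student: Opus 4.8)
The plan is to use that both notions of ``Kastanas Ramsey'' appearing here are already \emph{local} — each is a ``for all neighbourhoods, there is a refinement'' statement — and to bridge the \textbf{wA2}-space version with the Gowers-space version via Theorem \ref{thm:kr.strategies.equivalence}. The one extra ingredient is the elementary fact that every Kastanas game occurring here has all of its outcomes inside the neighbourhood below which it is played: the outcome of $K[b,C]$ lies in $[b,C]$, and the outcome of the Gowers Kastanas game $K(q)$ attached to $[a,A]$ lies in $[a,q]$. Writing $\Phi(B,b)$ for ``there is $C \in [b,B]$ such that \textbf{I} has a strategy in $K[b,C]$ to reach $\X^c$ or \textbf{II} has one to reach $\X$'', Definition \ref{def:kastanas.ramsey} says precisely that $\X$ is Kastanas Ramsey iff $\Phi(B,b)$ holds for all $B \in \cal{R}$ and $b \in \cal{AR}\restrictedto B$ (using $[b,B] \neq \emptyset$ from \textbf{A3}). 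For $(1)\iff(2)$: since $K[b,C]$ is literally the same game in $\cal{R}$ as in the restricted \textbf{wA2}-space $[a,A]$, and since for $a \sqsubseteq b$ and $C \leq A$ its outcomes stay in $[a,A]$, ``reach $\X^c$'' and ``reach the relative complement $[a,A]\setminus\X$'' coincide, as do ``reach $\X$'' and ``reach $\X\cap[a,A]$''; hence for fixed $A,a$ the statement ``$\X\cap[a,A]$ is Kastanas Ramsey in $[a,A]$'' unfolds to ``$\Phi(B,b)$ for all $B\in[a,A]$ and $b\in\cal{AR}\restrictedto[a,B]$'', and conjoining over all $A,a$ returns the condition of the first sentence. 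Both implications are then quantifier manipulation, using $[a,C]\subseteq[a,A]$ for $C\in[a,A]$ and the non-emptiness of neighbourhoods.

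For $(1)\iff(3)$: fix $A,a$ and put $G_{a,A} := ([a,A],\cal{AR}\restrictedto[a,A],\leq,\leq_a^*,\lhd)$. For $q\in[a,A]$ the Gowers Kastanas game $K(q)$ of $G_{a,A}$ only involves subspaces $\leq q$ (which carry $a$ as an initial segment, hence lie in $[a,q]$) and points compatible with $q$, and its outcome lies in $[a,q]$ by property (5) of Definition \ref{def:gowers.space} together with \textbf{G6}; so $K(q)$ in $G_{a,A}$ is the same game as $K(q)$ in the restricted Gowers space $G_{a,q}$. Applying Theorem \ref{thm:kr.strategies.equivalence} with the pair $(a,q)$, once to $\X$ and once to $\cal{R}\setminus\X$, then shows that \textbf{I} has a strategy in $K(q)$ of $G_{a,A}$ to reach $(\X\cap[a,A])^c$ iff \textbf{I} has one in $K[a,q]$ to reach $\X^c$, and likewise for \textbf{II}. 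Since $q\leq p$ (for subspaces of $G_{a,A}$) amounts to $q\in[a,p]$, Definition \ref{def:kr.gowers} applied to $\X\cap[a,A]$ in $G_{a,A}$ unfolds, for fixed $A,a$, to ``for every $p\in[a,A]$ there is $q\in[a,p]$ with the dichotomy of $\Phi$ for $\X$ realised via $K[a,q]$''; conjoining over all $A,a$ again recovers the condition of the first paragraph, so $(3)\iff(1)$.

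The genuine content sits in the two ``same game'' claims — above all that $K(q)$ is insensitive to whether its ambient Gowers space is $G_{a,A}$ or its restriction $G_{a,q}$, so that Theorem \ref{thm:kr.strategies.equivalence} (stated for the full Gowers space attached to a fixed neighbourhood) applies — and in identifying, via \textbf{A1}, the $X^\omega$-valued outcome of a Gowers Kastanas game below $q$ with a member of $[a,q]\subseteq\cal{R}$. Both are routine given \textbf{G1}--\textbf{G7}, but they are exactly where the Gowers-space axioms get invoked; the rest is bookkeeping with quantifiers and with the fact that outcomes remain in the neighbourhood below which the game is played.
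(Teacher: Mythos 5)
Your proof is correct and follows exactly the route the paper intends: the paper states this result as an immediate consequence of Theorem \ref{thm:kr.strategies.equivalence}, and your write-up simply makes explicit the two pieces of bookkeeping that make it immediate, namely that outcomes of each Kastanas game stay inside the neighbourhood below which it is played (so localising $\X$ costs nothing, giving $(1)\Leftrightarrow(2)$), and that $K(q)$ is insensitive to whether the ambient Gowers space is the one attached to $[a,A]$ or to $[a,q]$ (so the theorem, applied at the pair $(a,q)$ to $\X$ and to $\X^c$, gives $(1)\Leftrightarrow(3)$). Nothing further is needed.
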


Since a countable vector space $E^{[\infty]}$ is a Gowers \textbf{wA2}-space with countable $\cal{AR}$, we may conclude all the following classical facts of strategically Ramsey sets:

\begin{corollary}
\label{cor:sr.properties}
    Let $(E^{[\infty]},\leq,r)$ be the \textbf{wA2}-space of infinite-dimensional block subspaces of a countable vector space.
    \begin{enumerate}
        \item\label{cor:kr.iff.sr} $\X \subseteq E^{[\infty]}$ is Kastanas Ramsey (as in Definition \ref{def:kastanas.ramsey}) iff $\X$ is strategically Ramsey (as in Definition 1 of \cite{R08}).

        \item Every analytic subset of $E^{[\infty]}$ is strategically Ramsey.

        \item The set of strategically Ramsey subsets of $E^{[\infty]}$ is closed under countable unions, but not under complement and finite intersection.
    \end{enumerate}
\end{corollary}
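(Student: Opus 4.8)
The plan is to obtain all three items by specialising results already in hand, using throughout that $(E^{[\infty]},\leq,r)$ is a Gowers \textbf{wA2}-space (as shown in the example verifying this, immediately after the corresponding verification for $[\N]^\infty$) and that its $\cal{AR}=E^{[<\infty]}$ is countable, since $\mathbb{F}$ and $E$ are. For item~\ref{cor:kr.iff.sr}, I would chain three equivalences. First, by Corollary~\ref{cor:kr.equivalence} (resp.\ Theorem~\ref{thm:kr.strategies.equivalence}), $\X$ is Kastanas Ramsey in the sense of Definition~\ref{def:kastanas.ramsey} iff $\X\cap[a,A]$ is Kastanas Ramsey in the sense of Definition~\ref{def:kr.gowers} for every $A\in\cal{R}$ and $a\in\cal{AR}$. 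Second, applying Proposition~\ref{prop:kr.iff.sr} to each localised Gowers space $([a,A],\cal{AR}\restrictedto[a,A],\leq,\leq_a^*,\lhd)$, this is equivalent to: $\X\cap[a,A]$ is strategically Ramsey (Definition~3.2 of \cite{d20}) for every $A$ and $a$. Third, since the strategic Ramsey property already quantifies over all subspaces, this ``local everywhere'' form coincides with $\X$ being strategically Ramsey for the Gowers space attached to $E^{[\infty]}$, which by de Rancourt's analysis in \cite{d20} is precisely Rosendal's notion (Definition~1 of \cite{R08}). In short, item~\ref{cor:kr.iff.sr} is the paper's third main theorem read in the vector-space instance.

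Item (2) is then immediate: since $\cal{AR}$ is countable, the metrisable topology on $E^{[\infty]}$ is Polish, so Theorem~\ref{thm:analytic.is.kr} gives that every analytic subset of $E^{[\infty]}$ is Kastanas Ramsey, hence strategically Ramsey by item~\ref{cor:kr.iff.sr}. For the positive half of item (3), closure of the strategically Ramsey sets under countable unions follows from Proposition~\ref{prop:kr.closed.under.countable.unions} transported through item~\ref{cor:kr.iff.sr}.

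For the failure of closure under complement and under finite intersection, I would first note that one must restrict to the case $|F|>2$: if $|F|=2$ then $E^{[\infty]}$ satisfies \textbf{A4}, and by Theorem~\ref{thm:kr.iff.r} its Kastanas Ramsey $=$ strategically Ramsey sets coincide with the Ramsey sets, which do form an algebra. Assuming $|F|>2$, Example~\ref{ex:biasymptotic.set.vector.spaces} supplies a biasymptotic set, so Propositions~\ref{prop:kr.not.symmetric} and~\ref{prop:kr.not.closed.under.intersections} apply as soon as one knows $\bcal{KR}\neq\Po(E^{[\infty]})$, i.e.\ that some subset of $E^{[\infty]}$ fails to be Kastanas (equivalently, by item~\ref{cor:kr.iff.sr}, strategically) Ramsey. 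This last point is the one ingredient not delivered by the pure \textbf{wA2}-machinery of the earlier sections, and I expect it to be the main obstacle. I would handle it by invoking Rosendal's classical construction (in \cite{R08}) of a non-strategically-Ramsey subset of $E^{[\infty]}$ when $|F|>2$; alternatively, staying internal to this paper, by a transfinite recursion over a well-ordering of the reals of length continuum, which uses the deepness of $E^{[\infty]}$ (Corollary~\ref{cor:V=L.gives.non.kr.set.in.examples}) to guarantee that every strategy has continuum-many distinct outcomes below any subspace, so that at each stage one may commit a fresh outcome to $\X$ to defeat the next \textbf{I}-strategy and a fresh outcome to $\X^c$ to defeat the next \textbf{II}-strategy, yielding a (generally non-definable) non-Kastanas-Ramsey set. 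Either way item (3) follows, and I would flag explicitly that its negative half genuinely rests on $|F|>2$ together with this non-triviality of $\bcal{KR}$, rather than on the abstract results alone.
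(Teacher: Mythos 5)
Your proposal follows the same route as the paper: items (1) and (2) are obtained exactly as in the text, by combining Proposition \ref{prop:kr.iff.sr} with Corollary \ref{cor:kr.equivalence}, respectively with Theorem \ref{thm:analytic.is.kr}, and the positive half of (3) comes from Proposition \ref{prop:kr.closed.under.countable.unions} transported through (1). Where you differ is in the negative half of (3), and there your extra care is warranted: the paper's entire proof of that part is the single sentence that Example \ref{ex:biasymptotic.set.vector.spaces} supplies a biasymptotic set. That leaves implicit two points you make explicit. First, the biasymptotic set of Example \ref{ex:biasymptotic.set.vector.spaces} only exists when $|F|>2$; when $|F|=2$ the space satisfies \textbf{A1}--\textbf{A4}, so by Theorem \ref{thm:kr.iff.r} and Corollary \ref{cor:measurable.is.ramsey} the strategically Ramsey sets form a $\sigma$-algebra and the negative statement actually fails, so the restriction is not cosmetic. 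Second, Propositions \ref{prop:kr.not.symmetric} and \ref{prop:kr.not.closed.under.intersections} carry the hypothesis $\bcal{KR}\neq\Po(\cal{R})$, which the paper never discharges for $E^{[\infty]}$; your two suggested ways of discharging it (citing Rosendal's construction, or a choice-based diagonalisation against all strategies in the spirit of the proof of Theorem \ref{thm:sigma_1^2.wo.gives.non.kr.set}, using deepness) are both sound. So your write-up is correct and, on item (3), strictly more complete than the paper's.
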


\begin{proof}
    \hfill
    \begin{enumerate}
        \item Combine Proposition \ref{prop:kr.iff.sr} and Corollary \ref{cor:kr.equivalence}.

        \item Combine Proposition \ref{prop:kr.iff.sr} and Theorem \ref{thm:analytic.is.kr}.

        \item By Example \ref{ex:biasymptotic.set.vector.spaces}, there is a biasymptotic subset of $E^{[<\infty]}$.
    \end{enumerate}
\end{proof}

\subsection{Coanalytic Sets}
Combined with Proposition \ref{prop:kr.iff.sr} and Theorem IV.7.5 of \cite{AT05}, we get a positive answer to Question \ref{qn:analytic.non.kr.set} in the context of a countable vector space. This section shows that this is, in fact, a consequence of Corollary \ref{cor:r.times.2^omega.has.coanalytic.non.kr.set} and a suitable choice of coding. 

Let $E$ be a vector space over a countable field with a dedicated Schauder basis $(e_n)_{n<\omega}$. Let $Y \subseteq E$ be a biasymptotic set (i.e. for all $A \in E^{[\infty]}$, $\c{A} \cap Y \neq \emptyset$ and $\c{A} \cap Y^c \neq \emptyset$). We first define $\delta : E \to 2$ by stipulating that:
\begin{align*}
    \delta(x) := 
    \begin{cases}
        1, &\text{if $x \in Y$}, \\
        0, &\text{if $x \notin Y$}.
    \end{cases}
\end{align*}
Fix some $a \in E^{[<\infty]}$ and $p \in 2^{\lh(a)}$. Let $E^{[<\infty]}(a)$ denote the set of all $b \in E^{[\infty]}$ such that $a \sqsubseteq b$. We now define a map $\Lambda_{a,p} : E^{[<\infty]}(a) \to \cal{A}(E^{[\infty]} \times 2^\omega)$ by stipulating that:
\begin{align*}
    \Lambda_{a,p}(a^\frown(x_k)_{k<n}) := (a^\frown(x_{2k})_{2k<n},p^\frown(\delta(x_{2k+1}))_{2k+1<\omega}),
\end{align*}
We may then extend this function to a continuous map $\Lambda_{a,p} : E^{[\infty]}(a) \to E^{[\infty]} \times 2^\omega$. By Corollary \ref{cor:sr.properties}(\ref{cor:kr.iff.sr}), we may replace Kastanas Ramsey sets with strategically Ramsey sets in our discussion.

\begin{lemma}
\label{lem:coding.preserves.F}
    Let $\cal{C} \subseteq E^{[\infty]} \times 2^\omega$. Let $A \in E^{[\infty]}$, $a \in E^{[<\infty]}\restrictedto A$ and $p \in 2^{\lh(a)}$. Suppose that \textbf{I} has a strategy in $F[a,A]$ to reach $\Lambda_{a,p}^{-1}[\cal{C}]$. Then \textbf{I} has a strategy in $F[(a,p),(A,\vec{0})]$ to reach $\cal{C}$.
\end{lemma}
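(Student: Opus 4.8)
The statement relates the asymptotic game $F[a,A]$ in the vector space $E^{[\infty]}$ with the asymptotic game $F[(a,p),(A,\vec0)]$ in the product space $E^{[\infty]} \times 2^\omega$. Let me think about what the coding map $\Lambda_{a,p}$ does: it takes a sequence $a^\frown(x_k)_{k<n}$, keeps the even-indexed vectors $x_0, x_2, \dots$ as the $E^{[\infty]}$-coordinate, and uses the odd-indexed vectors $x_1, x_3, \dots$ to produce a binary sequence via $\delta$ (which records membership in $Y$). So moves in the product game for coordinate "the $2^\omega$ part" are simulated by moves where Player **II** plays an odd-indexed vector in the original game, and $\delta$ of that vector determines the bit.

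**The approach.** Let $\sigma$ be a strategy for **I** in $F[a,A]$ to reach $\Lambda_{a,p}^{-1}[\cal{C}]$. I would build a strategy $\tau$ for **I** in $F[(a,p),(A,\vec0)]$ to reach $\cal{C}$ by running a simulation of a play of $F[a,A]$ "underneath." A play of $F[(a,p),(A,\vec0)]$ has **I** playing subspaces $(C_n, u_n) \lessapprox (A,\vec0)$ — but since $\leq_a^*$-equivalence to $(A,\vec 0)$ in the $2^\omega$ coordinate forces the second coordinate to be constantly $0$ on a tail, effectively **I** plays $C_n \lessapprox A$ in the first coordinate — and **II** responds with a vector-plus-bit $(x_n, \epsilon_n)$ with $(x_0,\dots,x_n)\lhd C_n$ (in the product sense). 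I would interleave: when **II** in the product game plays $(x_n, \epsilon_n)$, in the simulated game $F[a,A]$ I first need **II** to play an even vector equal to $x_n$ (this is the genuine content), and then **II** must also play an odd vector $z_n$ with $\delta(z_n) = \epsilon_n$; the existence of such a $z_n$ below whatever subspace **I** has offered is exactly where the biasymptotic property of $Y$ is used — since $Y$ meets both $\c{B}$ and $\c{B}^c$ for every $B$, **II** can always realize either bit. Between these, **I**'s responses in the product game are obtained by applying $\sigma$ in the simulated game and projecting to the even coordinate, using the $F$-restriction $\lessapprox$ to pass between $E^{[\infty]}$ and $E^{[\infty]}\times 2^\omega$ (both coordinates behave compatibly since the $2^\omega$ side is frozen at $\vec 0$).

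**Key steps, in order.** (1) Set up the bookkeeping: to each partial state $s$ for **II** in $F[(a,p),(A,\vec0)]$ following $\tau$, assign a partial state $t_s$ for **II** in $F[a,A]$ following $\sigma$, of twice the rank, such that the even-indexed vectors of $t_s$ spell out the $E^{[\infty]}$-coordinate of $a(s)$ and $\delta$ of the odd-indexed vectors spells out the $2^\omega$-coordinate of $a(s)$; i.e. $\Lambda_{a,p}$ applied to the realization of $t_s$ gives the realization of $s$. (2) Define $\tau(\emptyset)$ from $\sigma(\emptyset)$, projecting to the even part. (3) Inductive step: given **II**'s move $(x_n,\epsilon_n)$ in the product game, extend $t_s$ by the even move $x_n$, feed it to $\sigma$ to get **I**'s next subspace $D$ in the simulated game; then use $Y$ biasymptotic to find an odd vector $z_n \in \c{D}$ (or its complement) with $\delta(z_n)=\epsilon_n$, extend $t_s$ again by $z_n$, feed to $\sigma$ to get **I**'s subsequent subspace, and set $\tau$'s response to be (a suitable $\lessapprox$-adjustment into) the even-projection of that subspace. (4) Verify legality: the $F$-game restrictions $\lessapprox$ translate correctly because the second coordinate stays $\vec 0$, and the $\lhd$-relation on the product space reduces to the $E^{[\infty]}$-relation on the first coordinate together with the trivial relation on $2^\omega$. (5) Verify the outcome: a complete play $s$ following $\tau$ has realization $\Lambda_{a,p}(A(t_s))$ where $A(t_s)$ is a complete play following $\sigma$, hence $A(t_s) \in \Lambda_{a,p}^{-1}[\cal{C}]$, hence the outcome of $s$ lies in $\cal{C}$.

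**Main obstacle.** The delicate point is the interaction between the $F$-restriction $\lessapprox$ in the two games and the fact that **I** in the product game must offer subspaces $\lessapprox (A,\vec0)$ — I need to check that after the simulated game has **I** play some $D \lessapprox A$, I can genuinely convert this into a legal $F$-move $(D', \vec0) \lessapprox (A,\vec0)$ with $D' \lessapprox A$, and that the $\lhd$-condition $(y_0, \dots, y_n)\lhd (D',\vec0)$ in the product space is implied by the corresponding condition in $E^{[\infty]}$ plus the triviality of the $2^\omega$-coordinate. A secondary subtlety is ensuring the odd-vector $z_n$ witnessing the bit $\epsilon_n$ can be chosen $<$-above all previously played vectors (so that the block-sequence conditions are preserved); this should follow because biasymptoticity holds below every $B$, in particular below a tail of $D$, but it needs to be stated carefully.
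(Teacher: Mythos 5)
Your proposal is correct and follows essentially the same route as the paper's proof: simulate a play of $F[a,A]$ underneath, matching each move $(x_n,\varepsilon_n)$ of \textbf{II} in the product game with two moves $x_n, y_n$ in the simulated game, where $y_n$ is chosen with $\delta(y_n)=\varepsilon_n$ (possible by biasymptoticity of $Y$), so that $\Lambda_{a,p}(A(t_s)) = A(s)$ and the outcome lands in $\cal{C}$. Your explicit attention to the legality of the $\lessapprox$-moves and to choosing $y_n$ block-above the previously played vectors is slightly more careful than the paper's write-up, but it is the same argument.
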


\begin{proof}
    Let $\sigma$ be a strategy for \textbf{I} in $F[a,A]$ to reach $\Lambda_{a,p}^{-1}[\cal{C}]$. We define a strategy $\tau$ for \textbf{I} in $F[(a,p),(A,\vec{0})]$ as follows: We first let $\tau(\emptyset) := (\sigma(\emptyset),\vec{0})$, and $t_\emptyset := \emptyset$. Now suppose, for the induction hypothesis, that for all states $s$ of $F[(a,p),(A,\vec{0})]$ for \textbf{II} of rank $n$, there exists a state $t_s$ of $F[a,A]$ for \textbf{II} of rank $2n$ such that $\Lambda_{a,p}(a(t_s)) = a(s)$ and $\tau(s) = (\sigma(t_s),\vec{0})$. Let $s$ be a state for \textbf{II} of rank $n + 1$, and suppose that $\last(s) = (x_n,\varepsilon)$, where $\varepsilon \in \{0,1\}$. We let $t_s := {t_{s\restrictedto n}}^\frown(x_n,y_n)$, where $y_n$ is any element of $E$ such that $y_n \in Y$ iff $\varepsilon = 1$. Observe that $\Lambda_{a,p}(a(t_s)) = a(s)$. This finishes the construction of the strategy $\tau$. Now let $s$ be a complete play in $F[(a,p),(A,\vec{0})]$ following $\tau$. Since $\sigma$ is a strategy that reaches $\Lambda_{a,p}^{-1}[\cal{C}]$, $A(t_s) \in \Lambda_{a,p}^{-1}[\cal{C}]$, so:
    \begin{align*}
        A(s) = \Lambda_{a,p}(A(t_s)) \in \Lambda_{a,p}[\Lambda_{a,p}^{-1}[\cal{C}]] \subseteq \cal{C},
    \end{align*}
    as desired.
\end{proof}

\begin{lemma}
\label{lem:coding.preserves.G}
    Let $\cal{C} \subseteq E^{[\infty]} \times 2^\omega$. Let $A \in E^{[\infty]}$, $a \in E^{[<\infty]}\restrictedto A$ and $p \in 2^{\lh(a)}$. Suppose that \textbf{II} has a strategy in $G[a,A]$ to reach $\Lambda_{a,p}^{-1}[\cal{C}]$. Then \textbf{II} has a strategy in $G[(a,p),(A,\vec{0})]$ to reach $\cal{C}$.
\end{lemma}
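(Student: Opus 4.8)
The plan is to dualise the proof of Lemma \ref{lem:coding.preserves.F}: we are handed a strategy $\sigma$ for \textbf{II} in $G[a,A]$ to reach $\Lambda_{a,p}^{-1}[\cal{C}]$ and must construct a strategy $\tau$ for \textbf{II} in $G[(a,p),(A,\vec{0})]$ to reach $\cal{C}$. As before we assign to each partial play $s$ of the product game following $\tau$ a partial play $t_s$ of $G[a,A]$, but now the plays are recorded by the moves of \textbf{I} (since $\sigma$ is \textbf{II}'s strategy, it is \textbf{I}'s moves in $G[a,A]$ that we must supply), and $G[a,A]$ runs at twice the rate of $G[(a,p),(A,\vec{0})]$: a state $s$ for \textbf{I} of rank $n$ will correspond to a state $t_s$ for \textbf{I} of rank $2n$ in $G[a,A]$ following $\sigma$, with $\Lambda_{a,p}(a(t_s)) = a(s)$. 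We begin with $t_\emptyset := \emptyset$.

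For the inductive step, suppose \textbf{I} has just played in the product game a subspace with block component $P_n \leq A$ (its $2^\omega$-component being immaterial, as it is ignored by $\preceq$). Passing, if necessary, to a block subspace $C_n \leq P_n$ with the property granted by \textbf{G7} (so that $b \lhd C_n$ implies $b \lhd P_n$), we feed $C_n$ to $\sigma$ as \textbf{I}'s next move in $G[a,A]$ and record \textbf{II}'s reply $x_{2n} := \sigma({t_{s\restrictedto n}}^\frown C_n)$; then we let \textbf{I} play a further block subspace $C_n' \leq C_n$ in $G[a,A]$ and record \textbf{II}'s reply $x_{2n+1} := \sigma({t_{s\restrictedto n}}^\frown(C_n, x_{2n}, C_n'))$. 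We set $t_s := {t_{s\restrictedto n}}^\frown(C_n, x_{2n}, C_n', x_{2n+1})$ and declare $\tau$'s reply in the product game to be $(x_{2n}, \delta(x_{2n+1}))$. This move is legal: its block component has $(x_0, x_2, \dots, x_{2n}) \lhd C_n$, hence $\lhd P_n$ by the choice of $C_n$ (the block condition holds since $x_{2n}$ block-extends $x_{2n-1}$, hence $x_{2n-2}$), while its $2^\omega$-component is unconstrained; and $\Lambda_{a,p}(a(t_s)) = a(s)$ persists, so the induction goes through.

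Finally, any complete play $s$ of $G[(a,p),(A,\vec{0})]$ following $\tau$ assembles, via the correspondence, into a complete play $t_s$ of $G[a,A]$ following $\sigma$, whose outcome $B := A(t_s) \in E^{[\infty]}(a)$ therefore lies in $\Lambda_{a,p}^{-1}[\cal{C}]$; since $A(s) = \Lambda_{a,p}(B)$ by construction, the outcome of $s$ lies in $\cal{C}$, as required. The only delicate point is the double-speed bookkeeping together with the verification that the auxiliary moves $C_n, C_n'$ fed to $\sigma$ keep the simulated play of $G[a,A]$ legal and $\lhd$-compatible with the product game — which is exactly what \textbf{G7} supplies. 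Note that, unlike in Lemma \ref{lem:coding.preserves.F}, the biasymptoticity of $Y$ is not used here, since the bits $\delta(x_{2n+1})$ are produced by the simulation rather than prescribed in advance.
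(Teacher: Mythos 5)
Your proof is correct and takes essentially the same route as the paper's: run the simulated game $G[a,A]$ at double speed, feeding $\sigma$ the block component of \textbf{I}'s move followed by one auxiliary move of \textbf{I}, and use $\sigma$'s second reply only to read off the bit via $\delta$; your closing observation that biasymptoticity of $Y$ is not needed here is also accurate. The one blemish is the appeal to \textbf{G7} for $C_n$: \textbf{G7} produces a $C \in [\depth_A(a),A]$, not a $C \leq P_n$, and in fact no shrinking is needed at all --- you may feed $P_n$ itself to $\sigma$, since $\sigma$'s reply is automatically $\lhd\, P_n$ and $\lhd$ passes upward along $\leq$ by axiom (5) of a Gowers space (the paper does exactly this, taking the auxiliary interleaved move to be $A$).
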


\begin{proof}
    Let $\sigma$ be a strategy for \textbf{II} in $G[a,A]$ to reach $\Lambda_{a,p}^{-1}[\cal{C}]$. We define a strategy $\tau$ for \textbf{II} in $G[(a,p),(A,\vec{0})]$ as follows: For any $B \leq A$, we let $t_{(B)} := (B)$. Now let $s$ be a state of $G[(a,p),(A,\vec{0})]$ for \textbf{I}, and suppose that we have defined a corresponding state $t_s$ of $G[a,A]$ of rank $2n$ such that $\Lambda_{a,p}(a(t_s)) = a(s)$, and $\last(s) = (\last(t_s),\vec{0})$. Let $x_n,y_n$ be such that $x_n = \sigma(t_s)$, and $y_n = \sigma({t_s}^\frown(A))$. We then let:
    \begin{align*}
        \tau(s) := 
        \begin{cases}
            (x_n,1), &\text{if $y_n \in Y$}, \\
            (x_n,0), &\text{if $y_n \notin Y$}.
        \end{cases}
    \end{align*}
    Now let $t_{s^\frown{(B)}} := {t_s}^\frown(A,B)$. This finishes the construction of the strategy $\tau$, and by a similar reasoning to the last paragraph of Lemma \ref{lem:coding.preserves.F}, $\tau$ is a strategy for \textbf{II} in $G[(a,p),(A,\vec{0})]$ to reach $\cal{C}$.
\end{proof}

We thus obtain the following variant of Theorem \ref{thm:kr.and.projection}.

\begin{theorem}[Theorem IV.4.14, \cite{AT05}]
\label{thm:sr.and.projections}
    If every coanalytic subset of $E^{[\infty]}$ is strategically Ramsey, then every $\mathbf{\Sigma}_2^1$ subset of $E^{[\infty]}$ is strategically Ramsey. More generally, for every $n \geq 1$, if every $\mathbf{\Pi}_n^1$ subset of $E^{[\infty]}$ is strategically Ramsey, then every $\mathbf{\Sigma}_{n+1}^1$ subset of $E^{[\infty]}$ is strategically Ramsey.
\end{theorem}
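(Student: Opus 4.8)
The plan is to mirror the argument for Theorem \ref{thm:kr.and.projection}, but carried out inside $E^{[\infty]}$ itself via the coding maps $\Lambda_{a,p}$, using Lemmas \ref{lem:coding.preserves.F} and \ref{lem:coding.preserves.G} in place of the abstract projection lemmas. Fix $n \geq 1$ and suppose every $\mathbf{\Pi}_n^1$ subset of $E^{[\infty]}$ is strategically Ramsey (the base case $n=1$ being the coanalytic hypothesis). Let $\X \subseteq E^{[\infty]}$ be $\mathbf{\Sigma}_{n+1}^1$, so there is a $\mathbf{\Pi}_n^1$ set $\cal{C} \subseteq E^{[\infty]} \times 2^\omega$ with $\X = \pi_0[\cal{C}]$; by replacing $\cal{C}$ with a Borel (indeed continuous) preimage one checks that, for each $a \in E^{[<\infty]}$ and $p \in 2^{\lh(a)}$, the set $\Lambda_{a,p}^{-1}[\cal{C}]$ is again $\mathbf{\Pi}_n^1$ as a subset of $E^{[\infty]}(a)$, hence (after identifying $E^{[\infty]}(a)$ with $E^{[\infty]}$ below the stem $a$, or simply intersecting with $[a,E^{[\infty]}]$) strategically Ramsey by hypothesis.

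First I would fix $A \in E^{[\infty]}$ and $a \in E^{[<\infty]}\restrictedto A$, and show there is $B \in [a,A]$ witnessing strategic Ramseyness of $\X$. The key observation, exactly as in the proof of Theorem \ref{thm:kr.and.projection}, is the alternative: either there is some $p \in 2^{\lh(a)}$ and some $B \in [a,A]$ such that \textbf{II} has a strategy in $G[a,B]$ to reach $\Lambda_{a,p}^{-1}[\cal{C}]$, or else for every $p \in 2^{\lh(a)}$ and every $B \in [a,A]$ there is $C \in [a,B]$ such that \textbf{I} has a strategy in $F[a,C]$ to reach $\Lambda_{a,p}^{-1}[\cal{C}]^c$. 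In the first case, Lemma \ref{lem:coding.preserves.G} gives \textbf{II} a strategy in $G[(a,p),(B,\vec{0})]$ to reach $\cal{C}$, and then the projection lemma for \textbf{II} (the analogue of the $\pi_0$-lemma used in Theorem \ref{thm:kr.and.projection}, obtained by composing $\sigma$ with $\pi_0$ on the point side) yields a strategy for \textbf{II} in $G[a,B]$ to reach $\pi_0[\cal{C}] = \X$, so $B$ works. In the second case, Lemma \ref{lem:coding.preserves.F} converts each of the finitely many strategies (one per $p \in 2^{\lh(a)}$) into a strategy for \textbf{I} in $F[(a,p),(C,\vec{0})]$ to reach $\cal{C}^c$, and then the argument of the $\pi_0$-lemma for \textbf{I} — amalgamating the $2^{\lh(a)}$ many strategies over a decreasing sequence of subspaces and threading the binary coordinate played by the imagined opponent — produces a single strategy for \textbf{I} in $F[a,B]$ to reach $\pi_0[\cal{C}]^c = \X^c$.

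To make the second case rigorous I would reprove, in the Gowers-space language of $G$ and $F$ rather than the $K$-game, the combinatorial heart of the projection lemma for \textbf{I}: enumerate $2^{\lh(a)} = \{p_0,\dots,p_{2^{\lh(a)}-1}\}$, build $B$ as the limit of a fusion-style decreasing sequence so that at each finite stage \textbf{I}'s move is legal simultaneously against all the partial auxiliary runs, and at the end verify that any outcome $(x_k)_{k<\omega}$ lying in $\X = \pi_0[\cal{C}]$ would, together with a witness $x \in 2^\omega$ and $p := x\restrictedto\lh(a)$, reconstitute a complete run of $F[(a,p),(B,\vec{0})]$ following $\sigma_p$ whose outcome is forced into $\cal{C}^c$, a contradiction. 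This amalgamation step is the main obstacle: unlike the $K$-game, in $F$ and $G$ the players alternate singly, so one must be careful that the single subspace \textbf{I} plays at stage $k$ simultaneously refines all $2^{\lh(a)}$ auxiliary subspaces $\sigma_{p_i}$ would demand, which forces an interleaving of the $2^{\lh(a)}$ runs; here the finiteness of $2^{\lh(a)}$ and property \textbf{G2} (closure under common refinement) of the Gowers space structure on $E^{[\infty]}$ are exactly what is needed. The induction on $n$ then closes: $\mathbf{\Sigma}_{n+1}^1$ sets are strategically Ramsey granting $\mathbf{\Pi}_n^1$ ones, and since $\mathbf{\Pi}_n^1 = \{\X^c : \X \in \mathbf{\Sigma}_n^1\}$, one feeds the conclusion back in at the next level, starting from the coanalytic hypothesis.
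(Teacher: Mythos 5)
Your proposal is correct in outline, but it takes a genuinely different and substantially longer route than the paper. The paper's proof is a three-step contrapositive: if some $\mathbf{\Sigma}_{n+1}^1$ subset of $E^{[\infty]}$ fails to be strategically Ramsey, then the contrapositive of Theorem \ref{thm:kr.and.projection} (via Corollary \ref{cor:sr.properties}) produces a $\mathbf{\Pi}_n^1$ set $\cal{C} \subseteq E^{[\infty]} \times 2^\omega$ that is not strategically Ramsey, witnessed at some $[(a,p),(A,\vec{0})]$; then Lemmas \ref{lem:coding.preserves.F} and \ref{lem:coding.preserves.G} are applied \emph{in the contrapositive direction} to conclude that neither player can win the corresponding games for $\Lambda_{a,p}^{-1}[\cal{C}]$ below any $B \in [a,A]$, so $\Lambda_{a,p}^{-1}[\cal{C}]$ is a $\mathbf{\Pi}_n^1$ (by continuity of $\Lambda_{a,p}$) non--strategically-Ramsey subset of $E^{[\infty]}$, contradicting the hypothesis. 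You instead argue directly: you pull $\cal{C}$ back along $\Lambda_{a,p}$, invoke the hypothesis on each preimage, and then re-run the dichotomy and the $2^{\lh(a)+n}$-fold amalgamation of Theorem \ref{thm:kr.and.projection}'s proof inside the $F$/$G$ games. This is logically sound --- your quantifier structure in the two cases is right, and the finite $\lessapprox$-refinement chains and upward-closure of $\lhd$ under $\leq$ do make the interleaving legal --- but it duplicates the projection machinery that Theorem \ref{thm:kr.and.projection} already encapsulates, and it leaves the hardest piece (the amalgamation lemma for \textbf{I} in the asymptotic game, which the paper never states for $F$ and $G$) as a sketch that would need to be written out. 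What the paper's route buys is exactly the avoidance of that re-proof: by arguing from a counterexample rather than toward a positive conclusion, it only ever needs the coding lemmas in the direction in which they are stated, and needs no new amalgamation at all.
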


\begin{proof}
    Suppose on the contrary that there exists a $\mathbf{\Sigma}_{n+1}^1$ non-strategically Ramsey subset of $E^{[\infty]}$. By Theorem \ref{thm:kr.and.projection}, there exists a $\mathbf{\Pi}_n^1$ subset $\cal{C} \subseteq E^{[\infty]} \times 2^\omega$ which is not strategically Ramsey. Therefore, there exists some $A \in E^{[\infty]}$ and $(a,p) \in (E^{[<\infty]} \times 2^{<\omega})\restrictedto(A,\vec{0})$ such that for all $(B,\vec{0}) \in [(a,p),(A,\vec{0})]$, neither \textbf{I} has a strategy in $F[(a,p),(B,\vec{0})]$ to reach $\cal{C}^c$, nor \textbf{II} has a strategy in $G[(a,p),(B,\vec{0})]$ to reach $\cal{C}$. By Lemma \ref{lem:coding.preserves.F} and Lemma \ref{lem:coding.preserves.G}, this implies that for all $B \in [a,A]$, neither \textbf{I} has a strategy in $F[a,B]$ to reach $\Lambda_{a,p}^{-1}[\cal{C}]^c = \Lambda_{a,p}^{-1}[\cal{C}^c]$, nor \textbf{II} has a strategy in $G[a,B]$ to reach $\Lambda_{a,p}^{-1}[\cal{C}]$. Therefore, $\Lambda_{a,p}^{-1}[\cal{C}]$ is a $\mathbf{\Pi}_n^1$ set (as $\Lambda_{a,p}$ is continuous) which is not strategically Ramsey.
\end{proof}

\begin{corollary}
    Suppose that there exists a $\Sigma_2^1$-good well-ordering of the reals. Then there exists a coanalytic subset of $E^{[\infty]}$ which is not strategically Ramsey.
\end{corollary}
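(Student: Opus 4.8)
The plan is to argue by contradiction, reducing to the $\mathbf{\Sigma}_2^1$ counterexample already produced by Theorem \ref{thm:sigma_1^2.wo.gives.non.kr.set}. Suppose towards a contradiction that every coanalytic subset of $E^{[\infty]}$ is strategically Ramsey. Applying Theorem \ref{thm:sr.and.projections} with $n = 1$, it follows that every $\mathbf{\Sigma}_2^1$ subset of $E^{[\infty]}$ is strategically Ramsey.

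I would then transfer this statement to the Kastanas game. By Corollary \ref{cor:sr.properties}(\ref{cor:kr.iff.sr}) (which itself combines Proposition \ref{prop:kr.iff.sr} with Corollary \ref{cor:kr.equivalence}), a subset of $E^{[\infty]}$ is strategically Ramsey if and only if it is Kastanas Ramsey in the sense of Definition \ref{def:kastanas.ramsey}. Hence, under our assumption, every $\mathbf{\Sigma}_2^1$ subset of $E^{[\infty]}$ would be Kastanas Ramsey.

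Finally I would invoke Theorem \ref{thm:sigma_1^2.wo.gives.non.kr.set} for the space $E^{[\infty]}$. This requires checking that $E^{[\infty]}$ satisfies the hypotheses of that theorem: it is a \textbf{wA2}-space by Example \ref{ex:vector.spaces}; its $\cal{AR} = E^{[<\infty]}$ is countable, since $E$ is a vector space of countable dimension over a countable field (hence countable), so the collection of finite $<$-increasing sequences of non-zero vectors is countable; and it is deep by Corollary \ref{cor:V=L.gives.non.kr.set.in.examples}(7). Since we have assumed a $\Sigma_2^1$-good well-ordering of the reals, Theorem \ref{thm:sigma_1^2.wo.gives.non.kr.set} yields a $\mathbf{\Sigma}_2^1$ subset of $E^{[\infty]}$ which is \emph{not} Kastanas Ramsey, contradicting the conclusion of the previous paragraph. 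Therefore some coanalytic subset of $E^{[\infty]}$ is not strategically Ramsey, as claimed.

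I do not anticipate a real obstacle, since all the combinatorial content is packaged into Theorems \ref{thm:sr.and.projections} and \ref{thm:sigma_1^2.wo.gives.non.kr.set}; the only points demanding care are confirming that $E^{[\infty]}$ is deep with countable $\cal{AR}$, and using Theorem \ref{thm:sr.and.projections} in its contrapositive form at the bottom level $n = 1$. An alternative, more concrete route would skip Theorem \ref{thm:sr.and.projections} altogether: apply Corollary \ref{cor:r.times.2^omega.has.coanalytic.non.kr.set} to obtain a coanalytic $\cal{C} \subseteq E^{[\infty]} \times 2^\omega$ that is not Kastanas Ramsey, and then use Lemmas \ref{lem:coding.preserves.F} and \ref{lem:coding.preserves.G} to pull $\cal{C}$ back along the continuous coding maps $\Lambda_{a,p}$ to a coanalytic subset of $E^{[\infty]}$ inheriting the failure of strategic Ramseyness --- which is essentially the argument of Theorem \ref{thm:sr.and.projections} written out in this special case.
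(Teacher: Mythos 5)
Your proposal is correct and rests on exactly the same two ingredients as the paper (Theorem \ref{thm:sigma_1^2.wo.gives.non.kr.set} plus the coding argument of Theorem \ref{thm:sr.and.projections}); your ``alternative, more concrete route'' at the end is essentially the paper's literal proof. The only difference in your main route is organisational --- you invoke the statement of Theorem \ref{thm:sr.and.projections} at the $\mathbf{\Sigma}_2^1$ level and contradict the counterexample on $E^{[\infty]}$ itself, rather than pulling back the coanalytic counterexample on $E^{[\infty]} \times 2^\omega$ via $\Lambda_{a,p}$ --- which is, if anything, slightly cleaner since it uses that theorem as a black box.
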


\begin{proof}
    By Theorem \ref{thm:sigma_1^2.wo.gives.non.kr.set}, there exists a coanalytic $\cal{C} \subseteq E^{[\infty]} \times 2^\omega$ which is not strategically Ramsey. Now apply Theorem \ref{thm:sr.and.projections}.
\end{proof}

We remark that we have also essentially proved Corollary IV.4.13 of \cite{AT05}: If $\X \subseteq E^{[\infty]}$ is analytic, then $\X = \pi_0[\cal{C}]$ for some $G_\delta$ subset $\cal{C} \subseteq E^{[\infty]} \times 2^\omega$. Then $\Lambda_{a,p}^{-1}[\cal{C}]$ is also a $G_\delta$ subset of $E^{[\infty]}$ for all $a,p$, as $\Lambda_{a,p}$ is always continuous and $E^{[\infty]}(a)$ is a clopen subset of $E^{[\infty]}$. Therefore, proving that every $G_\delta$ subset of $E^{[\infty]}$ is strategically Ramsey would imply that every analytic subset of $E^{[\infty]}$ is strategically Ramsey.

\section{Further remarks and open questions}
Todor\v{c}evi\'{c} proved in \cite{T10} that if $(\cal{R},\leq,r)$ is a closed triple satisfying \textbf{A1}-\textbf{A4}, then every Suslin-measurable subset of $\cal{R}$ is Ramsey. This is strictly stronger than Corollary \ref{cor:analytically.measurable.is.ramsey}, which is our best conclusion from our general results regarding Kastanas Ramsey sets. However, by Proposition \ref{prop:kr.not.closed.under.intersections}, Kastanas Ramsey sets need not be closed under the Suslin operation in general.

\begin{qn}
    Can we prove a general result about Kastanas Ramsey subsets of a \textbf{wA2}-space which, when restricted to the setting of topological Ramsey space, directly implies that Ramsey subsets are closed under the Suslin operation?
\end{qn}

There are various set-theoretic properties shared by Ramsey subsets of topological Ramsey spaces and strategically Ramsey subsets of countable vector spaces, but it is not apparent to us if one can provide general results (in the context of \textbf{wA2}-spaces) which encompass them. One such property concerns the statement ``Every set is Kastanas Ramsey''. It is a classic result that in Solovay's model, every subset of a Polish space has the property of Baire. Since the Ellentuck topology refines the Polish topology, we have the following:

\begin{theorem}
    Let $(\cal{R},\leq,r)$ be a closed triple satisfying \textbf{A1}-\textbf{A4} such that $\cal{AR}$ is countable. Let $\kappa$ be an inaccessible cardinal, and let $\G$ be $\func{Coll}(\omega,<\kappa)$-generic. Then in the model $\L(\R)^{\V[\G]}$, every subset of $\cal{R}$ is Ramsey.
\end{theorem}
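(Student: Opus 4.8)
The plan is to deduce this from the classical Solovay-style argument that in $\L(\R)^{\V[\G]}$ every set of reals has the Baire property with respect to \emph{any} second-countable topology coded in a suitable inner model, combined with the abstract Ellentuck theorem (Theorem 5.4, \cite{T10}), which identifies Ramsey with Baire relative to the Ellentuck topology. The key point is that the Ellentuck topology on $\cal{R}$ is not second-countable, so one cannot directly apply the standard Solovay theorem; instead one works with the metrisable (Polish) topology, for which $\cal{R}$ is a Polish space since $\cal{AR}$ is countable, and then transfers the conclusion via the structure of the Ellentuck topology.

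First I would recall that by the abstract Ellentuck theorem, a set $\X \subseteq \cal{R}$ is Ramsey iff it is Baire relative to the Ellentuck topology $\tau_{\mathrm{Ell}}$; and that $\X$ is Baire relative to $\tau_{\mathrm{Ell}}$ iff for every Ellentuck-basic set $[a,A]$ there is a nonempty $[a,B] \subseteq [a,A]$ with $[a,B] \subseteq \X$ or $[a,B] \subseteq \X^c$ — which is exactly the definition of Ramsey. So the target reduces to: in $\L(\R)^{\V[\G]}$, every $\X \subseteq \cal{R}$ has the abstract Ellentuck/Ramsey property. Next I would invoke the following schematic fact from Solovay's analysis: if $\kappa$ is inaccessible and $\G$ is $\Col(\omega,<\kappa)$-generic, then in $\L(\R)^{\V[\G]}$ every set $\X \subseteq \cal{R}$ is \emph{$\infty$-Borel over the reals} in the sense that it is definable from a real and an ordinal, and hence for every real parameter $z$, the set $\X$ is ``measurable'' with respect to every $\sigma$-ideal and every topology that is $\V[z]$-generically homogeneous — in particular, working over any condition, one can find a real $c$ such that $\X \cap [a,A]$ is, below some further refinement, either comeager or meager in the relative Ellentuck topology, using that any specific $[a,A]$ is homeomorphic (in the metrisable topology) to $\cal{R}$ itself and the forcing to refine inside $[a,A]$ (Mathias-style forcing associated to the triple) adds a ``generic'' element of $[a,A]$.

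Concretely, the steps I would carry out: (1) fix $\X \subseteq \cal{R}$ in $\L(\R)^{\V[\G]}$ and write $\X = \{B : \varphi(B,z)\}$ for some formula $\varphi$ and real $z \in \V[\G]$, noting $z \in \V[\G\restrictedto\alpha]$ for some $\alpha < \kappa$ by the usual factoring of $\Col(\omega,<\kappa)$; (2) fix $A \in \cal{R}$ and $a \in \cal{AR}\restrictedto A$ lying in $\V[\G\restrictedto\alpha]$ (a dense set of such, since $\cal{AR}$ is countable and $\cal{R}$ is a definable Polish space); (3) consider, in $\V[\G\restrictedto\alpha]$, the notion of forcing $\P_{[a,A]}$ whose conditions are pairs $(b,C)$ with $[b,C] \subseteq [a,A]$, ordered by $(b',C') \leq (b,C)$ iff $[b',C'] \subseteq [b,C]$ and $b' \sqsupseteq b$ — this is precisely the forcing whose generic object is an element of $[a,A]$ diagonalising all dense open families, and by the abstract Ellentuck machinery its generic real $\dot{B}_{\mathrm{gen}}$ satisfies: $[a,B_{\mathrm{gen}}]$ is either contained in the Ellentuck-interior of $\X$ or disjoint from it, for every $\X$ in the ground model of the forcing, hence $\X$ has the Ramsey property relative to any such ground model; (4) since $\kappa$ remains inaccessible over $\V[\G\restrictedto\alpha]$ and $\P_{[a,A]}$ has size $< \kappa$ (it is a subset of $\cal{AR} \times \cal{R}$, so of size continuum $< \kappa$), the tail $\Col(\omega,<\kappa)$ absorbs a $\P_{[a,A]}$-generic, so inside $\V[\G]$ there is, below any condition, a $\P_{[a,A]}$-generic $B$ over $\V[\G\restrictedto\alpha]$; (5) by Solovay's homogeneity/term-capturing argument applied to the formula $\varphi(\cdot,z)$, the truth of ``$B \in \X$'' for this generic $B$ is decided by a condition $(b,C) \in \P_{[a,A]}$ in $\V[\G\restrictedto\alpha]$, which by genericity and the Ellentuck property of $\P_{[a,A]}$ forces $[b,C] \subseteq \X$ or $[b,C] \subseteq \X^c$; taking $B' = C$ and the corresponding $b$, this witnesses that $\X$ is Ramsey at $[a,A]$. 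Since $[a,A]$ was arbitrary, $\X$ is Ramsey.

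The main obstacle — and the step requiring genuine care rather than routine bookkeeping — is (3)/(5): verifying that the poset $\P_{[a,A]}$ really does add a generic element of $[a,A]$ whose Ellentuck-neighbourhoods decide every ground-model set (i.e. that $\P_{[a,A]}$ "is" the Mathias forcing for the triple $(\cal{R},\leq,r)$ and that the abstract Ellentuck theorem relativises correctly to intermediate models), and that Solovay's term-capturing argument goes through for this non-Cohen, non-random forcing. Here \textbf{A1}--\textbf{A4} are exactly what is needed: \textbf{A3} and \textbf{A4} give the combinatorial content (fusion, diagonalisation, the pigeonhole for $\cal{AR}_{\lh(a)+1}$) that makes $\P_{[a,A]}$ behave like Mathias forcing and makes the abstract Ellentuck theorem available in every model containing the triple, while \textbf{A2}'s local finiteness of $\leq_\fin$ ensures $\P_{[a,A]}$ is $\sigma$-closed modulo a countable piece (or at least proper / $^{<\omega}\omega$-bounding as needed), so that it preserves the inaccessibility-based homogeneity of the Lévy collapse. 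I would cite the standard treatment of Ramsey forcing and the Solovay model (e.g. via \cite{T10}, Chapter on forcing and the Ellentuck theorem, together with Solovay's original argument or Kanamori's exposition) rather than reprove it, and note explicitly that the hypothesis "$\cal{AR}$ countable" is what makes $\cal{R}$ a Polish space and $\P_{[a,A]}$ of size $< \kappa$, both of which are essential.
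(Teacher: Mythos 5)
Your strategy is genuinely different from the paper's. The paper disposes of this theorem in two sentences: it invokes the classical fact that in $\L(\R)^{\V[\G]}$ every subset of a Polish space has the property of Baire, observes that the Ellentuck topology refines the metrisable topology, and appeals to the abstract Ellentuck theorem (Ramsey $=$ Baire relative to the Ellentuck topology); no further argument is given. You instead reconstruct the Mathias-style argument: factor the collapse, pass to the abstract Mathias forcing $\P_{[a,A]}$ over $\V[\G\restrictedto\alpha]$, and use homogeneity and term-capturing to decide $\varphi(\dot B_{\mathrm{gen}},z)$. Your opening observation --- that the Ellentuck topology is not second countable, so Solovay's theorem does not apply to it directly --- is exactly the point the paper's one-line derivation elides (a metrically meager set need not be Ellentuck meager, so the Baire property does not transfer upward along the refinement for free), and your route is the one that carries the actual content: it is Mathias's argument for $[\N]^\infty$, generalised to spaces satisfying \textbf{A1}--\textbf{A4} in the abstract local Ramsey theory literature.

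That said, two concrete points in your step (5) need repair. First, a condition $(b,C)$ deciding the Solovay-translated statement about $\dot B_{\mathrm{gen}}$ does \emph{not} yield $[b,C]\subseteq\X$ or $[b,C]\subseteq\X^c$: the set $[b,C]$ is full of non-generic elements (for instance all of its members lying in $\V[\G\restrictedto\alpha]$), about which the forcing says nothing, so ``taking $B'=C$'' fails. The correct witness is the generic $B_{\mathrm{gen}}$ itself, via the Mathias property: every $D\in[b,B_{\mathrm{gen}}]$ is again $\P_{[a,A]}$-generic below $(b,C)$, hence is decided the same way, so $[b,B_{\mathrm{gen}}]\subseteq\X$ or $[b,B_{\mathrm{gen}}]\subseteq\X^c$. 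Second, the Ramsey property must be witnessed at the original finite part $a$, so you also need pure decision (the Prikry property) for $\P_{[a,A]}$: some condition $(a,C')$ with first coordinate exactly $a$ must decide the statement; mere density of deciding conditions only produces a decision at some longer $b\sqsupseteq a$. Both hereditary genericity and pure decision are precisely where \textbf{A4} and the fusion/semiselectivity machinery (cf.\ Lemma \ref{lem:R.is.semiselective}) enter, and while you correctly flag this cluster as the delicate step, as written it is asserted rather than proved --- and it is essentially the entire mathematical content of the theorem.
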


On the other hand, we have the following property of strategically Ramsey sets:

\begin{theorem}[Lopez-Abad, \cite{L05}]
    Let $\kappa$ be a supercompact cardinal, and let $\G$ be $\func{Coll}(\omega,<\kappa)$-generic. Then in the model $\L(\R)^{\V[\G]}$, every subset of $E^{[\infty]}$ is strategically Ramsey.
\end{theorem}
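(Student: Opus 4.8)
The plan is to use the supercompact cardinal only to install the Axiom of Determinacy in the relevant inner model, and then to obtain the conclusion from a large-cardinal-free, purely game-theoretic argument. A supercompact cardinal $\kappa$ is inaccessible and a limit of Woodin cardinals (indeed, the Woodin cardinals below $\kappa$ are stationary in $\kappa$), so by the work of Martin, Steel and Woodin on determinacy in $\L(\R)$ of a L\'{e}vy collapse of a limit of Woodins, $N := \L(\R)^{\V[\G]}$ satisfies $\AD$; and since $\kappa$ is inaccessible, $N \models \DC$ (Solovay). It therefore suffices to prove, in $\ZF + \DC + \AD$, the single statement ``every subset of $E^{[\infty]}$ is strategically Ramsey'' --- a statement about the absolute set of reals $E^{[\infty]}$, since $E$ is a fixed countable structure --- and then read it off inside $N$. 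This is the ``all sets'' strengthening of Corollary \ref{cor:sr.properties}(2), where $\ZF + \DC$ already sufficed for the analytic case.

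So I would work in $\ZF + \DC + \AD$. Because the field is countable, $\cal{AR} = E^{[<\infty]}$ is countable, so $E^{[\infty]}$ --- and hence the set of ``subspace'' moves of the associated Gowers space --- is Polish; after the routine device of having a player who must name a subspace reveal it one coordinate at a time, interleaved with the point moves, the adversarial game $AG(q)$, the de Rancourt game $R(q)$, the asymptotic game $F(q)$ and the Gowers game $G(q)$ all become games with moves in $\omega$, viewed as games with rules (an illegal move counting as an immediate loss for the player who made it). By $\AD$, every such game is determined for every payoff. In particular, for each $\X \subseteq E^{[\infty]}$ and each $q$, the game $R(q)$ with payoff $\X$ is determined, so $\X$ is trivially de Rancourt Ramsey; passing the resulting strategies through Proposition \ref{prop:rr.iff.ar} converts them into winning strategies for Player \textbf{I} in $AG_{\mathbf{I}}(q')$ or for Player \textbf{II} in $AG_{\mathbf{II}}(q')$ for suitable $q' \leq p$, so $\X$ is adversarially Ramsey; and by the comparison of these game-theoretic Ramsey notions in \cite{d20} --- adversarially Ramsey implies strategically Ramsey --- $\X$ is strategically Ramsey. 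Applying this inside $N$ completes the proof; by Corollary \ref{cor:sr.properties}(\ref{cor:kr.iff.sr}) the conclusion may equivalently be stated for Kastanas Ramsey sets.

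The conceptual heart --- having enough determinacy in $\L(\R)^{\V[\G]}$ --- is supplied wholesale by the hypothesis, so the remaining work lies in two technical checks, of which the second is what I expect to be the real obstacle. First, the coding step: in $AG(p)$ the ``subspace'' player nominally produces an element of the uncountable Polish space $E^{[\infty]}$ in a single turn, and one must confirm that spreading such a move over infinitely many turns yields an equivalent game --- it does, since the payoff depends only on the sequence of point moves and the legality constraints $q_n \leq p$ and $(x_0,\dots,x_n) \lhd q_n$ are decided from finite information, so strategies translate both ways. Second, and more delicate, one must verify that de Rancourt's transfer lemmas between the games $R$, $AG$, $AG_{\mathbf{I}}$, $AG_{\mathbf{II}}$, $F$ and $G$ --- in particular the implication ``adversarially Ramsey $\Rightarrow$ strategically Ramsey'' --- which are set up over $\ZFC$, remain valid over $\ZF + \DC$; inspecting them, one sees they use only $\DC$-much choice, the infinite fusion and diagonalisation steps being available for the vector space from the $\sigma$-closure of almost-reduction (Property (3) of Definition \ref{def:gowers.space}), so nothing goes wrong. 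Finally, I would note that Lopez-Abad's original proof (\cite{L05}) follows the Solovay template instead --- reflect the definition of $\X$ into an intermediate model $\V[\G \restrictedto \alpha]$ with $\alpha < \kappa$ over which the tail forcing is again a L\'{e}vy collapse, and then use the supercompact to reflect the (second-order, uncountable-move) failure of the strategic Ramsey dichotomy below $p$ --- but the determinacy argument above shows that, in hindsight, a limit of Woodin cardinals already suffices.
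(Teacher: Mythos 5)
This theorem is not proved in the paper at all: it is quoted from \cite{L05}, where it is obtained by a Solovay--Mathias style coding, genericity and reflection argument in which the supercompact is used to reflect a failure of the dichotomy into an intermediate extension. Your proposal instead tries to derive it from determinacy in $\L(\R)^{\V[\G]}$. The large-cardinal bookkeeping in your first paragraph is fine (a supercompact is a stationary limit of Woodin cardinals, and the derived model theorem then gives $\ZF + \DC + \AD$ in $\L(\R)^{\V[\G]}$), but the game-theoretic half contains a genuine gap which I do not believe can be repaired along the lines you indicate.

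The gap is the sentence asserting that $AG(q)$, $R(q)$, $F(q)$ and $G(q)$ ``become games with moves in $\omega$'' once subspace moves are revealed coordinate-by-coordinate, with ``strategies translating both ways.'' $\AD$ gives determinacy only for games with moves in a countable set, whereas in each of these games at least one player plays elements of the uncountable Polish space $P \subseteq E^{[\infty]}$, and a strategy in the original game is a function of the opponent's \emph{entire} subspace. Stretching a real-valued move over infinitely many turns is not an equivalence of games: the player who must respond to a subspace now sees only a finite fragment of it before committing, so a winning strategy in the original game need not transfer to the coded one; and the player producing a subspace gets to adapt its tail to the opponent's subsequent moves, so a winning strategy in the coded game need not transfer back. (Compare the game in which \textbf{I} plays a real and \textbf{II} wins by copying it: \textbf{II} wins the one-shot version and loses the interleaved one.) Your observation that the legality constraints $q_n \leq p_n$ and $(x_0,\dots,x_n) \lhd p_n$ are decided by finite information shows only that the coded game is well-defined, not that it is equivalent. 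Indeed, de Rancourt shows in \cite{d20} that ``every subset of every Gowers space is adversarially Ramsey'' is equivalent over $\ZF + \DC$ to $\AD_\R$, which fails in every model of $V = \L(\R)$; so if your coding device worked uniformly it would prove $\AD \Rightarrow \AD_\R$, which is false, and the intermediate step of your chain --- that every subset of $E^{[\infty]}$ is adversarially Ramsey in $\L(\R)^{\V[\G]}$ --- is itself doubtful. For analytic payoffs one circumvents all of this by unfolding to a closed game and invoking Gale--Stewart, which holds for arbitrary move sets; for arbitrary payoffs there is no such reduction, and this is exactly why \cite{L05} resorts to the supercompact and a forcing-theoretic argument. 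Your closing claim that a limit of Woodin cardinals already suffices is therefore not established; as far as I know it remains open whether the hypothesis can be weakened in this way.
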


This leads to the following conjecture.

\begin{conjecture}
    Let $(\cal{R},\leq,r)$ be a \textbf{wA2}-space, and assume that $\cal{AR}$ is countable. It is consistent with sufficiently large cardinal assumptions that every subset of $\cal{R}$ is Kastanas Ramsey.
\end{conjecture}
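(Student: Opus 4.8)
The plan is to reduce the conjecture to the determinacy of the Kastanas game and to obtain that determinacy from a large-cardinal hypothesis. Starting from large cardinals (of the kind already invoked in the theorems quoted above, and no more than a supercompact), one forces and passes to an inner model $N$ of $\ZF + \DC$ in which $\AD_\R$ holds — a derived model, or $\L(\cal{P}(\R))$ of an appropriate symmetric collapse. Now fix, inside $N$, a \textbf{wA2}-space $(\cal{R},\leq,r)$ with $\cal{AR}$ countable, a set $\X \subseteq \cal{R}$, and $A \in \cal{R}$, $a \in \cal{AR}\restrictedto A$. Since $\cal{R}$ is a metrically closed, hence Polish, subspace of $\cal{AR}^\N$ and $\cal{AR}$ is countable, every move of the Kastanas game $K[a,A]$ — the $A_i \in \cal{R}$ for \textbf{I}, and the pairs $(a_{i},B_{i-1}) \in \cal{AR}\times\cal{R}$ for \textbf{II} — is coded by a real, the legality constraints are closed conditions, and the outcome map $p \mapsto A(p)$ is continuous. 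Hence the payoff set ``the play is legal and $A(p) \in \X$'' is a set of reals of $N$, so $\AD_\R$ makes $K[a,A]$ determined in $N$: if \textbf{II} wins, $B := A$ realises clause (2) of Definition~\ref{def:kastanas.ramsey}, and if \textbf{I} wins, the winning strategy forces the outcome out of $\X$, so $B := A$ realises clause (1). Thus $N \models$ ``every subset of every \textbf{wA2}-space with countable $\cal{AR}$ is Kastanas Ramsey'', which gives the conjecture — albeit in a model richer than the Solovay model.

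If one wants, in the spirit of Lopez-Abad's and Solovay's theorems above, the sharper statement that the conclusion holds already in $\L(\R)^{\V[G]}$ over a supercompact $\kappa$ (where $\AD_\R$ fails), the argument must instead lift the ``analytic $\Rightarrow$ Kastanas Ramsey'' mechanism of Section~4 up the whole pointclass hierarchy of $N := \L(\R)^{\V[G]}$. The key observation is that Theorem~\ref{thm:analytic.is.kr} is proved through the auxiliary space $\cal{R}\times 2^\omega$ and the projection lemmas preceding Theorem~\ref{thm:kr.and.projection}, using only that Borel subsets of $\cal{R}\times 2^\omega$ are Kastanas Ramsey because the Kastanas game has Borel payoff; replacing $2^\omega$ by $\Gamma^\omega$ for a tree-ordinal $\Gamma$, closed subsets of $\cal{R}\times\Gamma^\omega$ are Kastanas Ramsey by Gale--Stewart determinacy of closed games, which is insensitive to the size of the move set. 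One then needs: (i) that every $\X \subseteq \cal{R}$ of $N$ is the projection of such a closed set, i.e.\ carries a Suslin (or weakly homogeneous) representation — which is precisely what the supercompact provides in the collapse extension, by the standard theory underlying Lopez-Abad's proof; and (ii) that the witnessing $B$ and the strategies produced can be taken inside $N$, not merely in the outer model that supplies the representation. Via Proposition~\ref{prop:kr.iff.sr} and Theorem~\ref{thm:kr.strategies.equivalence} this route specialises to, and is modelled on, Lopez-Abad's theorem for countable vector spaces.

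The step I expect to be genuinely delicate, and the reason the statement is recorded here as a conjecture, is the interaction between the uncountable side coordinate $\Gamma^\omega$ and the \textbf{wA2} axioms: the projection argument of Section~4, run with $\Gamma$ uncountable, calls for transfinite fusion sequences of length $|\Gamma|^{\lh(a)}$, whereas metric closedness of $\cal{R}$ together with \textbf{A3} only delivers fusions of countable length. Circumventing this — either by arranging the representation to live on a tree on $\omega_1$ after absorbing part of the Solovay homogeneity reduction, or by isolating a mild extra completeness property of $\cal{R}$ (automatic in all the examples of Section~2.2), or by replacing the projection step with a direct rank argument on the Suslin tree in the style of the classical proof that Suslin sets are Ramsey — is to my mind the main obstacle. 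By contrast, no structural hypothesis on $\cal{R}$ beyond bare \textbf{wA2} should be needed, since the analytic case of Section~4 already dispenses with the Gowers axioms.
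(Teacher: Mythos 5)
The statement you are proving is recorded in the paper as an open conjecture; the paper contains no proof of it, so there is nothing to compare your argument against, and I will assess it on its own terms.

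Your first paragraph does establish the conjecture as literally stated, and by essentially the mechanism the paper itself already exploits with Borel determinacy in the proposition on biasymptotic sets: once the moves of the Kastanas game are coded by reals (possible precisely because $\cal{AR}$ is countable, so $\cal{R}$ is Polish), determinacy of $K[a,B]$ with payoff $\X$ yields the Kastanas Ramsey dichotomy outright, and $\AD_\R$ supplies that determinacy for arbitrary payoff. Two repairs are needed. First, $A$ itself need not belong to $[a,A]$, since $a \in \cal{AR}\restrictedto A$ only gives $a \leq_\fin r_n(A)$ for some $n$, not $a = r_n(A)$; so rather than taking $B := A$ you should fix any $B \in [a,A]$ (nonempty by \textbf{A3}(1), as $\depth_A(a) < \infty$) and apply determinacy to $K[a,B]$ directly. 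Second, you must verify that $(\cal{R},\leq,r)$ remains a \textbf{wA2}-space when reinterpreted in the inner model $N$: axiom \textbf{A3}(2) is roughly $\Pi^1_3$ over a real coding the space, so its persistence is not given by Shoenfield alone, though it does follow from the projective generic absoluteness already available under the large cardinals you assume. (Your aside that the legality constraints are closed is false --- $A_0 \leq A$ is $\Pi^0_2$ --- but harmless, since under $\AD_\R$ the complexity of the payoff is irrelevant.) With these repairs the first paragraph is a correct consistency proof, at a large-cardinal cost well below a supercompact.

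That said, you should be clear that this almost certainly does not capture what the conjecture is really after. The surrounding discussion juxtaposes the Solovay-model theorem for topological Ramsey spaces with Lopez-Abad's theorem for $E^{[\infty]}$, both of which live in $\L(\R)^{\V[\G]}$ over a collapse; the intended content is a single theorem of that form for all \textbf{wA2}-spaces with countable $\cal{AR}$, which would genuinely unify the two via Theorem \ref{thm:kr.iff.r} and Corollary \ref{cor:sr.properties}. Your second and third paragraphs do not prove this: they correctly identify the obstacles --- obtaining Suslin or weakly homogeneous representations for all sets of $N$, internalising the resulting strategies to $N$, and the mismatch between the uncountable side coordinate $\Gamma^\omega$ and the countable fusions that metric closedness plus \textbf{A3} provide --- but they resolve none of them. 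So the honest summary is: the literal consistency statement is settled by the $\AD_\R$ observation, while the $\L(\R)^{\V[\G]}$ version that motivates the conjecture remains open, and your proposal for it is a programme rather than an argument.
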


We conclude with a question that naturally extends Corollary \ref{cor:r.times.2^omega.has.coanalytic.non.kr.set}:

\begin{qn}
\label{qn:analytic.non.kr.set}
    Let $(\cal{R},\leq,r)$ be a sufficiently well-behaved \textbf{wA2}-space. Assume that it has a biasymptotic set, and that $\cal{AR}$ is countable. If there exists a $\Sigma_2^1$-good well-ordering of the reals, then must there exist a coanalytic subset of $\cal{R}$ which is not Kastanas Ramsey?
\end{qn}

\section*{Acknowledgements}
The author would like to express his deepest gratitude to Spencer Unger and Christian Rosendal for their helpful comments, suggestions and corrections.

\printbibliography[heading=bibintoc,title={References}]

\end{document}